\newtheorem{theorem}{Theorem}
\numberwithin{theorem}{section}
\newtheorem{corollary}[theorem]{Corollary}
\newtheorem{definition}[theorem]{Definition}
\newtheorem{lemma}[theorem]{Lemma}
\newtheorem{proposition}[theorem]{Proposition}
\theoremstyle{definition}
\newtheorem{remark}[theorem]{Remark}
\newcommand{\abGal}[1] {\operatorname{Gal}\big(\overline{#1}/#1\big)}
\newcommand{\fieldExtension}{K:E^*}
\newcommand{\MT}{\operatorname{MT}}
\newcommand{\Hg}{\operatorname{Hg}}
\date{}
\begin{document}

\title{Galois representations attached to abelian varieties of CM type}
\alttitle{Représentations galoisiennes associées aux variétés abéliennes de type CM}

\author{Davide Lombardo}
\address{Département de Mathématiques d'Orsay}
 \email{davide.lombardo@math.u-psud.fr}

\begin{abstract}
Let $K$ be a number field, $A/K$ be an absolutely simple abelian variety of CM type, and $\ell$ be a prime number. We give explicit bounds on the degree over $K$ of the division fields $K(A[\ell^n])$, and when $A$ is an elliptic curve we also describe the full Galois group of $K(A_{\text{tors}})/K$. This makes explicit previous results of Serre \cite{MR0387283} and Ribet \cite{MR608640}, and strengthens a theorem of Banaszak, Gajda and Kraso\'n \cite{MR1971250}. Our bounds are especially sharp when the CM type of $A$ is nondegenerate.
\end{abstract}

\begin{altabstract}
Soient $K$ un corps de nombres, $A/K$ une variété abélienne géométriquement simple de type CM et $\ell$ un nombre premier. Nous donnons des bornes explicites sur le degré sur $K$ des extensions $K(A[\ell^n])$ engendrées par les points de $\ell^n$-torsion de $A$, et quand $A$ est une courbe elliptique nous décrivons le groupe de Galois de $K(A_{\text{tors}})/K$ tout entier. Cela fournit une version explicite de résultats antérieurs de Serre \cite{MR0387283} et Ribet \cite{MR608640}, et renforce un théorème de Banaszak, Gajda and Kraso\'n \cite{MR1971250}. Nos bornes sont particulièrement fines quand le type CM de $A$ est non-dégénéré.
\end{altabstract}

\subjclass{14K22, 11F80, 11G10}
\keywords{Complex multiplication, Galois representations, elliptic curves, Mumford-Tate group}
\altkeywords{Multiplication complexe, représentations galoisiennes, courbes elliptiques, groupe de Mumford-Tate}

\maketitle

\section{Introduction and statement of the result}
The aim of this work is to study division fields of simple abelian varieties of CM type. Recall that an abelian variety $A$, of dimension $g$ and defined over a number field $K$, is said to admit (potential) complex multiplication, or CM for short, if there is an embedding $E \hookrightarrow \operatorname{End}_{\overline{K}}(A) \otimes \mathbb{Q}$, where $E$ is an étale $\mathbb{Q}$-algebra of degree $2g$. We shall very often restrict to the situation of $A$ admitting complex multiplication by $E$ \textit{over $K$}, by which we mean that $\operatorname{End}_K(A)$ is equal to $\operatorname{End}_{\overline{K}}(A)$, and of $A$ being absolutely simple, or equivalently, of $E$ being a number field (of degree $2g$ over $\mathbb{Q}$). The problem we discuss is that of estimating the degree $[K(A[\ell^n]):K]$, where $\ell$ is a prime number and $K(A[\ell^n])$ is the field generated over $K$ by the coordinates of the $\ell^n$-torsion points of $A$ in $\overline{K}$. 
As we shall see shortly, this is really a problem in the theory of Galois representations, and the seminal contributions of Shimura--Taniyama \cite{MR0125113} and Serre--Tate \cite{MR0236190} provide us with powerful tools for handling these representations in the CM case. Employing such tools, Silverberg studied in \cite{MR971328} the extension of $K$ generated by a single torsion point of $A$, while Ribet gave in \cite{MR608640} asymptotic (non-effective) bounds on $[K(A[\ell^n]):K]$ as $n \to \infty$. Our first result can be seen as an explicit version of the main theorem of \cite{MR608640}:

\begin{theorem}\label{thm_Simple}
Let $K$ be a number field and $A/K$ be an abelian variety of dimension $g$ admitting complex multiplication over $K$ by an order in the CM field $E$. Denote by $\mu$ be the number of roots of unity contained in $E$ and by $h(K)$ the class number of $K$. Let $r$ be the rank of the Mumford-Tate group of $A$ (cf. definition \ref{def_MT}) and $\ell > \sqrt{2 \cdot g!}$ be a prime unramified in $E \cdot K$. The following inequality holds:
\[
\frac{1}{4\mu \sqrt{g!}} \cdot \ell^{nr} \leq [K(A[\ell^n]):K] \leq \frac{5 }{2} \mu \cdot h(K) \cdot \ell^{nr}.
\]
\end{theorem}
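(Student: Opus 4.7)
My plan is to identify $\operatorname{Gal}(K(A[\ell^n])/K)$ with the reduction modulo $\ell^n$ of the image of the $\ell$-adic Galois representation $\rho_\ell\colon\abGal{K}\to\operatorname{GL}(T_\ell A)$, and then to control this image by means of the theory of complex multiplication together with the structure of the Mumford--Tate torus. Since $A$ has CM over $K$ by an order of $E$, the commutant of the image of $\rho_\ell$ is $E\otimes\mathbb{Q}_\ell$, so that $\rho_\ell(\abGal{K})\subseteq(E\otimes\mathbb{Q}_\ell)^\times$; by Serre--Tate its Zariski closure is (a conjugate of) the Mumford--Tate torus $T:=\MT(A)$, and the image is open in $T(\mathbb{Q}_\ell)$. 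The problem thus splits into two subproblems: compute $|T(\mathbb{Z}/\ell^n\mathbb{Z})|$, and bound the index of $\rho_\ell(\abGal{K})$ in $T(\mathbb{Z}_\ell)$.

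For the counting step I would exploit the hypothesis that $\ell$ is unramified in $E\cdot K$ to spread $T$ out to a smooth $\mathbb{Z}_\ell$-torus of relative dimension $r$. A standard computation then gives
\[
|T(\mathbb{Z}/\ell^n\mathbb{Z})| \;=\; |T(\mathbb{F}_\ell)|\cdot \ell^{(n-1)r},
\]
with $|T(\mathbb{F}_\ell)|=\prod_{i=1}^{r}(\ell-\zeta_i)$, where the $\zeta_i$ are roots of unity (eigenvalues of Frobenius on the character lattice of $T$). Hence $|T(\mathbb{Z}/\ell^n\mathbb{Z})|=\ell^{nr}\bigl(1+O(1/\ell)\bigr)$, and the assumption $\ell>\sqrt{2\cdot g!}$ is precisely what turns this estimate into the explicit multiplicative constants $\tfrac{5}{2}$ and $\tfrac{1}{4}$ appearing in the statement.

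To bound the index I would invoke the Shimura--Taniyama / Serre--Tate main theorem of complex multiplication: through the Artin reciprocity map, $\rho_\ell$ factors as the composition of a Hecke character $\chi\colon \mathbb{A}_K^\times\to(E\otimes\mathbb{A}_{\mathbb{Q}})^\times$ with reduction modulo the image of the global units of $K$. The global units act on $T_\ell A$ through roots of unity of $E$, contributing a factor of order dividing $\mu$; the obstruction coming from the class group of $K$ contributes $h(K)$. Combining this with the local surjectivity of $\chi$ at the primes of good reduction above $\ell$ yields both the upper bound $\tfrac{5}{2}\mu\, h(K)\,\ell^{nr}$ and a matching lower bound.

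The main obstacle will be the explicit control of the constants, most notably the factor $\sqrt{g!}$ in the denominator of the lower bound. This factor reflects the index of $T$ inside the ambient torus $\operatorname{Res}_{E/\mathbb{Q}}\mathbb{G}_m$, and is controlled by a combinatorial analysis of the CM type $\Phi$ under the action of $\abGal{\mathbb{Q}}$ on the embeddings of $E$. Bounding this index uniformly in $g$ --- and in particular achieving the square-root improvement over the naive estimate --- is where most of the technical effort will concentrate, and is the step that most strongly ties the argument to the nondegenerate-CM-type phenomenon highlighted in the abstract.
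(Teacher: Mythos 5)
Your overall plan does match the paper's: reduce to comparing $G_{\ell^\infty}$ with $\MT(A)(\mathbb{Z}_\ell)$, count points of the Mumford--Tate torus modulo $\ell^n$, and control the index via Serre--Tate and the reflex norm; the paper in fact derives Theorem~\ref{thm_Simple} as a direct combination of Theorem~\ref{thm_Nondegenerate_Intro}(1) and Theorem~\ref{thm_Finale_Intro}. The point-counting step you describe is exactly Lemma~\ref{lemma_Nicolas}, and your description of how roots of unity and the class group enter matches Proposition~\ref{prop_Epsilon}. However, there are two genuine gaps in how you propose to obtain the lower-bound constant.

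First, your attribution of the factor $\sqrt{g!}$ is wrong. You say it ``reflects the index of $T$ inside the ambient torus $\operatorname{Res}_{E/\mathbb{Q}}\mathbb{G}_m$,'' but $T=\MT(A)$ is a subtorus of $T_E$ of strictly smaller dimension (rank $r\le g+1$ versus $2g$), so that index is infinite and carries no useful information. The quantity that actually produces the constant is $|F|$, the order of the component group of $\ker\bigl(\Phi_{(E,S)}:T_{E^*}\to T_E\bigr)$, i.e.\ the torsion of $\widehat{T_{E^*}}/\Phi_{(E,S)}^*\widehat{T_E}$, and note that the source is the \emph{reflex} torus $T_{E^*}$, not $T_E$. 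The bound $|F|\le f(r)$ is obtained by applying Hadamard's inequality to the $\{0,1\}$-matrix representing $\Phi_{(E,S)}^*$ on character lattices (Lemmas~\ref{lemma_Det01}--\ref{lemma_OrderGroupConnComponents}, Proposition~\ref{prop_ConnCompMT}), and one then checks numerically that $f(r)\le f(g+1)$ and $(1-1/\ell)^r$ absorb into $4\sqrt{g!}$ when $\ell>\sqrt{2\cdot g!}$. Relatedly, your suggestion that this constant ``ties the argument to the nondegenerate-CM-type phenomenon'' is incorrect: Theorem~\ref{thm_Simple} makes no nondegeneracy assumption, and $|F|\le f(g+1)$ holds unconditionally; nondegeneracy only enters in part~(2) of Theorem~\ref{thm_Nondegenerate_Intro}.

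Second, you assert that the CM main theorem ``yields both the upper bound and a matching lower bound,'' but you do not explain how to control, with explicit constants, the cokernel of the map induced on $\mathbb{Z}_\ell$-points by $\Phi_{(E,S)}^{-1}\circ N_{K/E^*}$. This is where the technical weight of the proof sits: the paper bounds $\bigl|\operatorname{coker}\bigl(T_{E^*}(\mathbb{Z}_\ell)\to\MT(A)(\mathbb{Z}_\ell)\bigr)\bigr|$ by a cohomological argument (Proposition~\ref{prop_UnramifiedBound}, exploiting cohomological triviality of $\mathcal{O}_L^\times$ over a cyclic unramified $\Gamma$ and Nakayama's theorem), and uses local class field theory (Theorem~\ref{thm_CokernelOfNorm} and Remark~\ref{rmk_NormTotallySplit}) for the norm step. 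Your sketch also does not address the fact that $G_{\ell^\infty}$ need not lie inside $\MT(A)(\mathbb{Z}_\ell)$ at all; the paper separates the index into $[G_{\ell^\infty}:G_{\ell^\infty}\cap\MT(A)(\mathbb{Z}_\ell)]$ (bounded by $|\mu(E)|\cdot h(K)$) and $[\MT(A)(\mathbb{Z}_\ell):G_{\ell^\infty}\cap\MT(A)(\mathbb{Z}_\ell)]$ (bounded via $|F|$), and these two bounds feed the two inequalities differently. Without identifying $|F|$ and the Hadamard bound as the source of $\sqrt{g!}$, you would not be able to complete the lower-bound estimate.
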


Even though theorem \ref{thm_Simple} gives a good idea of the actual order of magnitude of the degree $[K(A[\ell^n]):K]$, we can in fact prove much more precise results that apply to all primes $\ell$ and which are most easily described in the language of Galois representations. Recall that for every $\ell$ and every $n$ there is a natural continuous action of $\abGal{K}$ on $A[\ell^n]$, giving rise to a representation
\[
\rho_{\ell^n} : \abGal{K} \to \operatorname{Aut} (A[\ell^n]);
\]
the extension $[K(A[\ell^n]):K]$ is Galois, and its Galois group can be identified with the image $G_{\ell^n}$ of $\rho_{\ell^n}$. Taking the inverse limit of this system of representations gives rise to the $\ell$-adic representation on the Tate module $T_\ell A$,
\[
\rho_{\ell^\infty} : \abGal{K} \to \operatorname{Aut} (T_\ell A).
\]

We denote by $G_{\ell^\infty}$ the image of $\rho_{\ell^\infty}$ and remark that, for every $n$, the group $G_{\ell^n}$ is clearly isomorphic to the image of $G_{\ell^\infty}$ through the canonical projection
\[
\operatorname{Aut} (T_\ell A) \to \operatorname{Aut} \left ( \frac{T_\ell A}{\ell^n T_\ell A}\right) \cong \operatorname{Aut} (A[\ell^n]);
\]
for simplicity of exposition, we fix once and for all a $\mathbb{Z}_\ell$-basis of $T_\ell A$ and consider $G_{\ell^\infty}$ (resp. $G_{\ell^n}$) as a subgroup of $\operatorname{GL}_{2g}(\mathbb{Z}_\ell)$ (resp. of $\operatorname{GL}_{2g}(\mathbb{Z}/\ell^n \mathbb{Z})$).

We have thus reduced the problem of giving bounds on $[K(A[\ell^n]):K]$ to that of describing $G_{\ell^n}$: in trying to do so, it is natural to compare $G_{\ell^\infty}$ with $\MT(A)$, the Mumford-Tate group of $A$ (cf. definition \ref{def_MT}). By construction, $\MT(A)$ is an algebraic subtorus of $\operatorname{GL}_{2g}$ which is only defined over $\mathbb{Q}$, so there is no obvious good definition for the group of its $\mathbb{Z}_\ell$-valued points. However, Ono \cite{MR0124326} has shown that there is in fact a good notion of $\MT(A)(\mathbb{Z}_\ell)$ (cf. definition \ref{def_ZlPoints}), and the Mumford-Tate conjecture \cite[§4]{MR0206003} -- which is a theorem for CM abelian varieties (\cite{MR0228500} and \cite{MR0125113}) -- can be expressed by saying that, possibly after replacing $K$ by a finite extension, $G_{\ell^\infty}$ is a finite-index subgroup of $\MT(A)(\mathbb{Z}_\ell)$.
For the sake of simplicity, assume for now that no extension of the base field $K$ is necessary to attain the condition $G_{\ell^\infty} \subseteq \MT(A)(\mathbb{Z}_\ell)$ (our results do not depend on this assumption). The problem of estimating the degree $[K(A[\ell^n]):K]$ is then reduced to the study of two separate quantities: the order of the finite group $\MT(A)(\mathbb{Z}/\ell^n\mathbb{Z})$ and the index $[\MT(A)(\mathbb{Z}_\ell):G_{\ell^\infty}]$.

We treat the first problem in two important situations: when $\ell$ is unramified in $E$ (a rather simple case, covered by lemma \ref{lemma_Nicolas}), and when the CM type of $A$ is nondegenerate (theorem \ref{thm_Nondegenerate}). Our result can be stated as follows:
\begin{theorem}\label{thm_Nondegenerate_Intro}
Let $A/K$ be an absolutely simple abelian variety of dimension $g$, admitting (potential) complex multiplication by the CM field $E$. Denote by $\MT(A)$ the Mumford-Tate group of $A$ and let $r$ be its rank.
\begin{enumerate}
\item If $\ell$ is unramified in $E$ the following inequalities hold:
\[
(1-1/\ell)^{r} \ell^{nr}  \leq \left|\MT(A)(\mathbb{Z}/\ell^n\mathbb{Z})\right| \leq (1+1/\ell)^{r} \ell^{nr}.
\]
\item Suppose $r=g+1$. For all primes $\ell \neq 2$ and all $n \geq 1$ we have
\[
(1-1/\ell)^{g+1} \cdot \ell^{(g+1)n} \leq |\MT(A)(\mathbb{Z}/\ell^n\mathbb{Z})| \leq 2^g \left(1 + 1/\ell \right)^{g-1} \ell^{(g+1)n},
\]
while for $\ell=2$ and all $n \geq 1$ we have
\[
\frac{1}{2^{2g+3}} \cdot 2^{(g+1)n} \leq |\MT(A)(\mathbb{Z}/2^n\mathbb{Z})| \leq 2^{2g-1} \cdot 2^{(g+1)n}.
\]
\end{enumerate}
\end{theorem}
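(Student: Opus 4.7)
The plan is to translate the counting problem into a question about the character lattice $X^*(\MT(A))$ as a Galois module, and then to analyse its local structure at $\ell$.

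For \textbf{(1)}, the splitting field of $\MT(A)$ is contained in the Galois closure of $E$, so when $\ell$ is unramified in $E$ the torus $\MT(A)$ admits a smooth $\mathbb{Z}_\ell$-model. Standard point counting for smooth tori then yields
\[
|\MT(A)(\mathbb{Z}/\ell^n\mathbb{Z})|=\ell^{(n-1)r}\cdot\det\bigl(\ell\cdot I-\operatorname{Frob}_\ell\bigm| X^*(\MT(A))\otimes\mathbb{Q}\bigr).
\]
Since $\operatorname{Frob}_\ell$ acts on $X^*(\MT(A))$ through a finite quotient, its eigenvalues $\alpha_1,\ldots,\alpha_r$ are roots of unity and each factor $(\ell-\alpha_i)$ satisfies $\ell-1\le|\ell-\alpha_i|\le\ell+1$; multiplying the $r$ factors gives the claimed bounds.

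For \textbf{(2)}, the hypothesis $r=g+1$ is equivalent to nondegeneracy of the CM type, in which case the cocharacter lattice of $\MT(A)$ is the saturation in $\mathbb{Z}^{\operatorname{Hom}(E,\overline{\mathbb{Q}})}$ of the Galois orbit of the Hodge cocharacter, a lattice one can write down explicitly. This realises $\MT(A)$ as a codimension-$(g-1)$ subtorus of $T_E:=\operatorname{Res}_{E/\mathbb{Q}}\mathbb{G}_m$, and hence $\MT(A)(\mathbb{Z}_\ell)$ as a subgroup of $T_E(\mathbb{Z}_\ell)=\prod_{v\mid\ell}\mathcal{O}_{E_v}^\times$ cut out by explicit character relations. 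For each $v\mid\ell$ the principal unit filtration gives $|\mathcal{O}_{E_v}^\times/(1+\ell^n\mathcal{O}_{E_v})|=(q_v-1)\,q_v^{ne_v-1}$; taking the product over $v\mid\ell$, using $\sum_v e_vf_v=2g$, and then cutting down by the defining relations produces the main term $\ell^{(g+1)n}$ together with multiplicative constants of the form $(1\pm\ell^{-k})$, plus at most one factor of $2^g$ coming from the interaction between the $\mu_2$-components of the local units and the defining relations.

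The main obstacle is the case $\ell=2$: the logarithm is not an isomorphism on the bottom layer $1+2\mathcal{O}_{E_v}$ of the principal unit filtration, the groups $\mathcal{O}_{E_v}^\times$ carry nontrivial $\mu_{2^\infty}$-torsion, and this torsion interacts with the defining relations in a way that has to be tracked by hand. Refining the filtration analysis one step lower and carefully counting the $2$-torsion contributions is what produces the weaker, purely $2$-power constants stated in the $\ell=2$ case, instead of the clean $(1\pm 1/\ell)$-type bounds available for odd $\ell$.
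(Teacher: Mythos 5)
For part (1) your argument is essentially the paper's: the paper reduces to Lemma~\ref{lemma_Nicolas} via good reduction of $T_E$ (Proposition~\ref{prop_GoodReductionNumberFieldTori}), and that lemma is proved by exactly the Hensel-plus-point-count mechanism you spell out. You make the eigenvalue estimate explicit where the paper delegates to a citation; that is fine.

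For part (2) there is a genuine gap. Your plan correctly identifies the setup -- realise $\MT(A)(\mathbb{Z}_\ell)$ inside $\prod_{v\mid\ell}\mathcal{O}_{E_v}^\times$ and count along the principal unit filtration -- but then the phrase ``cutting down by the defining relations'' is doing all of the work, and it is precisely the work the paper spends \S\ref{subsect_Nondegenerate}--\ref{subsect_Nondegenerate2} on. Concretely, the constraint $x\tau(x)\in\mathbb{Z}_\ell^\times$ couples the factors, so one cannot just take a product of local filtration orders and divide. The paper handles this by (i) passing to the norm-one \emph{Hodge subtorus} $\Hg(A)$, where the relation $x\tau(x)=1$ decomposes cleanly over $\tau$-orbits of the factors of $E\otimes\mathbb{Q}_\ell$; (ii) proving that for each $\tau$-stable factor the quotient $C(n)/C(n+1)$ of the norm-one filtration has the expected size for $n\ge 1$ (Lemma~\ref{lemma_Filtration}), which requires a Hensel argument on the quadratic equation \eqref{eq_Findt} and is exactly the place where $\ell=2$ forces a shift of one level in the filtration; (iii) computing $|C(0)/C(1)|$ case by case according to whether $L/L^\tau$ is ramified or unramified (Lemma~\ref{lemma_Level0}) -- this is the real source of the $2^g$ factor, not the vague ``$\mu_2$-component interaction'' you allude to; and (iv) bridging from $\Hg(A)$ back to $\MT(A)$ via the map $\Psi:\Hg(A)(\mathbb{Z}/\ell^n\mathbb{Z})\times(\mathbb{Z}/\ell^n\mathbb{Z})^\times\to\MT(A)(\mathbb{Z}/\ell^n\mathbb{Z})$, whose kernel is $\{\pm 1\}$ and whose cokernel requires a separate argument (Lemma~\ref{lemma_ImPsi}). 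Your proposal contains none of steps (ii)--(iv).

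Moreover, the lower bound for odd $\ell$ is not a routine consequence of the filtration count: the paper needs a dichotomy. Either some $F_i/F_i^\tau$ is ramified, in which case $|C(0)/C(1)|=2\ell^{f_i}$ compensates for the factor $\tfrac12$ lost through $\ker\Psi$; or all those extensions are unramified, in which case one must explicitly produce an element of $\MT(A)(\mathbb{Z}/\ell^n\mathbb{Z})$ whose similitude factor $x\tau(x)$ is a non-square, to show $\Psi$ is not surjective and thereby recover the missing factor of $2$. That solvability argument (a smooth conic has points over the residue field, which then lift) is an essential ingredient and is entirely absent from your plan. As written, your proposal would only yield a lower bound off by a factor of $2$, and would give no account of the precise constants in the $\ell=2$ statement.
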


As for the index $[\MT(A)(\mathbb{Z}_\ell):G_{\ell^\infty}]$, our main result is as follows (cf. definition \ref{def_ReflexNorm} for the notion of reflex norm):

\begin{theorem}\label{thm_Finale_Intro}{(Theorem \ref{thm_Finale})}
Let $A/K$ be an absolutely simple abelian variety of dimension $g$ admitting complex multiplication over $K$ by the CM type $(E,S)$, and let $\ell$ be a prime number. If $A$ has bad reduction at a place of $K$ dividing $\ell$ let $\mu^*=|\mu(E)|$, the number of roots of unity in $E$; if on the contrary $A$ has good reduction at all places of $K$ of characteristic $\ell$ set $\mu^*=1$.
Denote by $r$ the rank of $\MT(A)$ and by $F$ the group of connected components of the kernel of the reflex norm $T_{E^*} \to T_{E}$, where $E^*$ is the reflex field of $E$. Then:

\begin{enumerate}
\item[(1)] The index $\left[G_{\ell^\infty} : G_{\ell^\infty} \cap \MT(A)(\mathbb{Z}_\ell) \right]$ does not exceed $|\mu(E)| \cdot h(K)$, where $h(K)$ is the class number of $K$.
\item[(2)] We have $\left[\MT(A)(\mathbb{Z}_\ell): G_{\ell^\infty} \cap \MT(A)(\mathbb{Z}_\ell) \right] \leq \mu^* \cdot [\fieldExtension] \cdot |F|^{2r}$.
\item[(3)] If $\ell$ is unramified in $E$ and does not divide $|F|$, then the index $\left[\MT(A)(\mathbb{Z}_\ell): G_{\ell^\infty} \cap \MT(A)(\mathbb{Z}_\ell)\right]$ divides $\mu^* \cdot [\fieldExtension] \cdot |F|$. If $\ell$ is also unramified in $K$, the bound can be improved to $\mu^* \cdot |F|$.
\end{enumerate}
Finally we have $r\leq g+1$ and $|F| \leq f(r) \leq f(g+1)$, where \[\displaystyle 
f(x)=\left\lfloor 2 \left( \frac{x+1}{4} \right)^{(x+1)/2} \right\rfloor.\]
\end{theorem}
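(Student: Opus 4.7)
The overarching strategy is Serre--Tate theory: after base-changing $K$ to contain the reflex field $E^*$, the $\ell$-adic representation $\rho_{\ell^\infty}$ is described explicitly by an algebraic Hecke character of infinity type $S$, and this character factors through the reflex norm $\Phi : T_{E^*} \to T_E$. The Mumford--Tate group $\MT(A)$ is identified, after extension of scalars, with the image of $\Phi$, so the three parts of the theorem correspond to controlling separately (i) the descent from the compositum $K \cdot E^*$ back down to $K$, (ii) the failure of $\Phi$ to be surjective on $\mathbb{Z}_\ell$-points and the further discrepancy between $G_{\ell^\infty}$ and $\Phi(T_{E^*}(\mathbb{Z}_\ell))$, and (iii) the simplifications available in the tame setting.

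For part (1), the plan is to compare $G_{\ell^\infty}$ with its intersection with $\MT(A)(\mathbb{Z}_\ell)$ by measuring the obstruction to lifting the Hecke character of $A$ globally. Class field theory contributes a factor bounded by $h(K)$, since the obstruction to representing ideal classes of $K$ by principal ideles is precisely the class group; the ambiguity in choosing a Hecke character modulo characters of finite order contributes a factor bounded by $|\mu(E)|$. For part (2), I would apply the snake lemma to the exact sequence
\[
1 \to \ker(\Phi) \to T_{E^*} \xrightarrow{\Phi} \MT(A) \to 1
\]
on $\mathbb{Z}_\ell$-points. The cokernel is controlled by torsors for the component group $F$, producing the factor $|F|^{2r}$ via a crude estimate on $H^1(\mathbb{Z}_\ell, \ker(\Phi))$ that uses the rank of the ambient torus. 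The factor $[\fieldExtension]$ accounts for the descent from $K \cdot E^*$ to $K$, while $\mu^*$ absorbs the additional roots of unity that appear at primes of bad reduction lying above $\ell$, where the image of Galois may shrink.

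Part (3) is a refinement in the tame setting. When $\ell$ is unramified in $E$ and coprime to $|F|$, the torus $T_{E^*}$ is smooth over $\mathbb{Z}_\ell$ and $F$ has order invertible over $\mathbb{Z}_\ell$; standard vanishing results for torsors over complete discrete valuation rings then collapse the exponent from $2r$ to $1$, giving $\mu^* \cdot [\fieldExtension] \cdot |F|$. The further improvement when $\ell$ is also unramified in $K$ comes from noting that the $\operatorname{Gal}(K \cdot E^*/K)$-descent then imposes no obstruction at $\ell$, so the factor $[\fieldExtension]$ drops out.

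For the concluding bounds, $r \leq g+1$ is a classical structural fact about Mumford--Tate groups of simple CM abelian varieties, following from the orbit decomposition of the CM type under complex conjugation. The bound $|F| \leq f(r)$ is a combinatorial statement: $F$ sits inside the torsion of the Galois-coinvariants of the cocharacter lattice of $\ker(\Phi)$, a $\mathbb{Z}$-lattice of rank at most $r$, and an explicit Minkowski-type estimate on orders of finite groups preserving such lattices yields the formula for $f$. The main obstacle will be the torsor-counting in part (2): showing that the exponent $2r$ suffices (and drops to $1$ in the tame case) requires careful cohomological bookkeeping, as does disentangling the various ambiguities (roots of unity, component groups, class-group contributions, and field extensions) so that no contribution is double-counted.
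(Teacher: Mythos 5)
Your overall framing — Serre--Tate theory, Hecke character of infinity type $S$, reflex norm, and bounding the discrepancy between $G_{\ell^\infty}$ and $\MT(A)(\mathbb{Z}_\ell)$ factor by factor — is the right strategy, and your account of part (1) is essentially the paper's: the factor $h(K)$ comes from passing to the idèle group of the Hilbert class field, and $|\mu(E)|$ from the restriction to the kernel of the finite-order character $\varepsilon$. But there are two genuine problems downstream.

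First, the role of $[\fieldExtension]$ is misidentified. The hypothesis that the CM is defined over $K$ already forces $E^* \subseteq K$, so $K \cdot E^* = K$ and there is no ``descent from $K \cdot E^*$ to $K$'' to account for. In the paper this factor arises because the representation factors through the local norm $N_{K_\ell/E^*_\ell}$, and the cokernel of $N_{K/E^*}$ on $\mathbb{Z}_\ell$-points of tori has order dividing $[\fieldExtension]$ by local class field theory; when $\ell$ is unramified in $K$ this local norm is actually surjective, which is why the factor drops out in part (3). Your explanation of that improvement (``the descent imposes no obstruction'') points in the wrong direction.

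Second, and more seriously, the plan for part (2) would fail as stated. You propose to take the snake lemma on $\mathbb{Z}_\ell$-points of $1 \to \ker(\Phi) \to T_{E^*} \to \MT(A) \to 1$ and bound the boundary term by a ``crude estimate on $H^1(\mathbb{Z}_\ell,\ker(\Phi))$.'' But part (2) must cover arbitrary $\ell$, including primes ramified in $E$, where $T_{E^*}\otimes\mathbb{Q}_\ell$ has no smooth $\mathbb{Z}_\ell$-model: there the $\mathbb{Z}_\ell$-points are defined à la Ono as the maximal compact subgroup of $T(\mathbb{Q}_\ell)$, not as $\mathbb{Z}_\ell$-points of a group scheme, and so there is no exact sequence of $\mathbb{Z}_\ell$-group schemes to take cohomology of. Even granting some ad hoc fix, the claimed bound $|F|^{2r}$ is not something a torsor count naturally produces; in the paper the exponent $2r$ is an artifact of the entirely different, geometric argument of Proposition \ref{prop_UnconditionalBound}: one factors through the quotient by $(\ker\Phi)^0$, obtaining an isogeny of degree $|F|$, and bounds its cokernel on maximal compacts by comparing Haar volumes of the source and image; the Jacobian calculation gives $|m|_\ell^{d}$ and the kernel size gives $m^d$, so the cokernel has order at most $|F|^{r}\cdot|F|_\ell^{-r} \le |F|^{2r}$. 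Your cohomological machinery is in fact what the paper reserves for part (3), where the good-reduction hypothesis ($\ell$ unramified in $E$) guarantees a smooth $\mathbb{Z}_\ell$-model, an unramified splitting field $L$, and cohomologically trivial $\mathcal{O}_L^\times$, allowing the Nakayama/Tate-cohomology bookkeeping that yields a cokernel dividing $|F|\cdot|F|_\ell^{-[L:\mathbb{Q}_\ell]}$ — which becomes simply $|F|$ under the hypothesis $\ell\nmid|F|$. So the shape of the bound in part (3) is not a generic vanishing statement but a concrete computation you would need to carry out.

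Your sketch of the final estimates is in the right spirit: $F$ is indeed the torsion of the character group of $\ker(\Phi)$, and the bound $|F|\le f(r)$ in the paper comes from applying Hadamard's inequality to the $\{0,1\}$-matrix of $\Phi^*$ via the explicit device of adjoining a row of $1$'s. You'd need to supply that computation to make ``Minkowski-type estimate'' precise.
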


\begin{remark} A few comments are in order:
\begin{itemize}
\item Theorem \ref{thm_Simple} follows immediately upon combining theorems \ref{thm_Nondegenerate_Intro} and \ref{thm_Finale_Intro}.
\item The assumption that the action of $E$ is defined over $K$ implies that the reflex field $E^*$ is contained in $K$, see \cite[Chap. 3, Theorem 1.1]{MR713612}. In particular, the degree $[K:E^*]$ makes sense.
\item The condition $\ell \nmid |F|$ is certainly satisfied if $\ell > |F|$: in particular, it is true for all primes $\ell > f(r)$.
\item Since $|F|$ is bounded by $f(g+1)$, the degree $[\fieldExtension]$ does not exceed $[K:\mathbb{Q}]$, and $\mu^*$ can be controlled in terms of $g$ alone (a trivial bound is for example $\mu^* \leq 16g^2$), we see that part (2) of theorem \ref{thm_Finale_Intro} gives a universal bound on $\left[\MT(A)(\mathbb{Z}_\ell): G_{\ell^\infty} \cap \MT(A)(\mathbb{Z}_\ell)\right]$ that only depends on $g$ and $[K:\mathbb{Q}]$.
\item
For small values of $g$ the function $f(g+1)$ takes reasonably small values: we have $f(3)=2$, $f(4)=3$, $f(5)=6$, $f(6)=14$ and $f(7)=32$.
\end{itemize}
\end{remark}

In the special case of elliptic curves the Mumford-Tate group admits a particularly simple description, which leads to a very precise characterization of the corresponding Galois representation. Such a description can already be found (in a non-effective form) in \cite[Corollaire on p.302]{MR0387283}, and the following result makes it completely explicit:

\begin{theorem}{(Theorem \ref{thm_EC})}
Let $A/K$ be an elliptic curve such that $\operatorname{End}_{\overline K}(A)$ is an order in an imaginary quadratic field $E$. Denote by $\displaystyle 
\rho_\infty : \abGal{K} \to \prod_{\ell} \operatorname{Aut} T_\ell A$ 
the natural adelic representation attached to $A$, and let $G_\infty$ be its image. For every prime $\ell$ denote by $C_\ell$ the group $\left(\mathcal{O}_E \otimes \mathbb{Z}_\ell \right)^\times$, considered as a subgroup of $\operatorname{Aut}_{\mathbb{Z}_\ell} \left( \mathcal{O}_E \otimes \mathbb{Z}_\ell \right) \cong \operatorname{GL}_2(\mathbb{Z}_\ell) \cong \operatorname{Aut} T_\ell A$, 
and let $N(C_\ell)$ be the normalizer of $C_\ell$ in $\operatorname{GL}_2(\mathbb{Z}_\ell)$.
\begin{enumerate}
\item Suppose that $E \subseteq K$: then $G_\infty$ is contained in $\prod_\ell C_\ell$, and the index $\left[\prod_\ell C_\ell : G_\infty \right]$ does not exceed $3[K:\mathbb{Q}]$. The equality $G_{\ell^\infty}=C_\ell$ holds for every prime $\ell$ unramified in $K$ and such that $A$ has good reduction at all places of $K$ of characteristic $\ell$.
\item Suppose that $E \not \subseteq K$: then $G_\infty$ is contained in $\prod_\ell N(C_\ell)$ but not in $\prod_\ell C_\ell$, and the index $\left[\prod_\ell N(C_\ell) : G_\infty \right]$ is not finite. The intersection $H_\infty=G_\infty \cap \prod_\ell C_\ell$ has index 2 in $G_\infty$, and the index $\left[\prod_\ell C_\ell : H_\infty \right]$ does not exceed $6[K:\mathbb{Q}]$. The equality $G_{\ell^\infty}=N(C_\ell)$ holds for every prime $\ell$ unramified in $K \cdot E$ and such that $A$ has good reduction at all places of $K$ of characteristic $\ell$.
\end{enumerate}
Finally, the constants 3 and 6 appearing in parts (1) and (2) respectively can be replaced by 1 and 2 if we further assume that the $j$-invariant of $A$ is neither 0 nor 1728.
\end{theorem}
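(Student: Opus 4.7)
The plan is to specialize Theorem \ref{thm_Finale_Intro} to the elliptic curve setting, where all the invariants take very restrictive values. For $g=1$ the CM field $E$ is imaginary quadratic and therefore Galois over $\mathbb{Q}$; one checks directly that its reflex field is $E$ itself, so $[\fieldExtension]=[K:E]$, which equals $[K:\mathbb{Q}]/2$ as soon as $E \subseteq K$. The Mumford-Tate group is the full torus $\operatorname{Res}_{E/\mathbb{Q}}\mathbb{G}_m$, whose rank is $r=g+1=2$ and whose $\mathbb{Z}_\ell$-points are exactly $C_\ell=(\mathcal{O}_E\otimes\mathbb{Z}_\ell)^\times$. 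Finally, $|F|\le f(2)=1$, so the reflex norm has connected kernel and the dependency on $F$ disappears from Theorem \ref{thm_Finale_Intro}.

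For part (1), assume $E \subseteq K$. Because the $\mathcal{O}_E$-action on $T_\ell A$ commutes with the Galois action, $G_{\ell^\infty}$ is contained in the $\mathcal{O}_E\otimes\mathbb{Z}_\ell$-linear automorphism group of $T_\ell A$, which is exactly $C_\ell$; hence $G_\infty \subseteq \prod_\ell C_\ell$. Plugging the invariants above into Theorem \ref{thm_Finale_Intro}(2) gives $[C_\ell:G_{\ell^\infty}]\le \mu^{*}\cdot[K:\mathbb{Q}]/2$, which is at most $3[K:\mathbb{Q}]$ since $|\mu(E)|\in\{2,4,6\}$. To pass from these uniform-per-prime bounds to the adelic bound, I would invoke the rigidity of the CM representation: by the main theorem of complex multiplication, all $\rho_{\ell^\infty}$ are cut out by a single algebraic Hecke character of $K$ with values in $E^\times$, so the image in $\prod_\ell C_\ell$ is open of index controlled by the same local expression rather than by the product of local indices. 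The equality $G_{\ell^\infty}=C_\ell$ for $\ell$ unramified in $K$ and of good reduction for $A$ then follows from Theorem \ref{thm_Finale_Intro}(3), which under these hypotheses bounds the index by $\mu^{*}\cdot|F|=1$.

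For part (2), assume $E \not\subseteq K$ and set $K'=KE$, a quadratic extension of $K$. Over $K'$ we are in the situation of part (1), so $H_\infty := G_{K',\infty}$ is contained in $\prod_\ell C_\ell$ with index at most $3[K':\mathbb{Q}]=6[K:\mathbb{Q}]$; by construction $H_\infty$ has index $[K':K]=2$ in $G_\infty$ and equals $G_\infty\cap\prod_\ell C_\ell$. Any $\sigma\in G_\infty\setminus H_\infty$ restricts to the nontrivial element of $\operatorname{Gal}(E/\mathbb{Q})$, so on $T_\ell A$ it does not commute with $\mathcal{O}_E\otimes\mathbb{Z}_\ell$ but rather conjugates the $\mathcal{O}_E$-action by complex conjugation; this is precisely the condition for $\sigma$ to normalise $C_\ell$ without lying in it. Hence $G_\infty\subseteq\prod_\ell N(C_\ell)$ but $G_\infty\not\subseteq\prod_\ell C_\ell$, and the adelic index of $G_\infty$ in $\prod_\ell N(C_\ell)$ is infinite because $[N(C_\ell):C_\ell]=2$ contributes independently at each of the infinitely many primes $\ell$ that are inert or ramified in $E$. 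The equality $G_{\ell^\infty}=N(C_\ell)$ for good $\ell$ unramified in $KE$ reduces, over $K'$, to the equality $G_{\ell^\infty}=C_\ell$ established in part (1). The final improvement when $j\notin\{0,1728\}$ comes from the remark that these two $j$-invariants correspond precisely to CM by $\mathbb{Q}(i)$ and $\mathbb{Q}(\sqrt{-3})$, the only imaginary quadratic fields with $|\mu(E)|>2$; under the hypothesis $j\notin\{0,1728\}$ one therefore has $\mu^{*}\le 2$ always, which turns the bounds $3[K:\mathbb{Q}]$ and $6[K:\mathbb{Q}]$ into $[K:\mathbb{Q}]$ and $2[K:\mathbb{Q}]$.

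The main technical obstacle I anticipate is the passage from the per-prime bounds of Theorem \ref{thm_Finale_Intro} to a uniform adelic index: a naive product of local indices diverges, and one genuinely needs to exploit the fact that a \emph{single} Hecke character controls the whole system $\{\rho_{\ell^\infty}\}$. The rest of the proof is then a bookkeeping exercise of translating the invariants $r$, $|F|$, $\mu^{*}$, $[\fieldExtension]$ into their concrete values for elliptic curves and keeping track of the factor $2$ introduced by the quadratic extension $KE/K$ in case (2).
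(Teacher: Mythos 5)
Your plan gets the bookkeeping right (the values of $r$, $|F|$, $\mu^*$, $[\fieldExtension]$ for $g=1$, the reduction of case (2) to case (1) via $K'=KE$, and the source of the improvement when $j\notin\{0,1728\}$), and your argument that a $\sigma\in G_\infty\setminus H_\infty$ normalizes $C_\ell$ without lying in it, because it conjugates the $\mathcal{O}_E$-action by complex conjugation, is actually more direct than the paper's route (the paper goes through lemmas \ref{lemma_NormalizerCartan}--\ref{lemma_NontrivialIntersection}, using Zariski density of open subgroups of a Cartan to bound the normalizer, and Faltings' theorem on endomorphisms to rule out abelianity of $G_{\ell^\infty}$). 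Minor quibble: $[N(C_\ell):C_\ell]=2$ for \emph{every} prime $\ell$ including split ones, so you need not restrict to inert or ramified $\ell$ to see that $[\prod_\ell N(C_\ell):\prod_\ell C_\ell]$ is infinite.

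The genuine gap is exactly the one you flag as "the main technical obstacle" and then do not resolve: the adelic index bound. You cannot derive $\bigl[\prod_\ell C_\ell : G_\infty\bigr]\le 3[K:\mathbb{Q}]$ from the per-prime bounds of Theorem \ref{thm_Finale_Intro}(2) by appealing to an unstated "rigidity" principle. Those per-prime bounds do not even imply finiteness of the adelic index on their own (one needs $G_{\ell^\infty}=C_\ell$ for almost all $\ell$, which is the last sentence of part (1)), and naively multiplying the finitely many remaining local indices would overshoot. What the paper actually does is work adelically from the outset: by Theorem \ref{thm_FundamentalTheoremCM}, $\rho_\infty$ on $I_K$ has the form $\rho_\infty(a)=\varepsilon(a)\cdot\bigl(N_{K_\ell/E_\ell}(a_\ell^{-1})\bigr)_\ell$ where $\varepsilon:I_K\to E^\times$ satisfies $\varepsilon(\mathcal{O}_{K,v}^\times)\subseteq\mu(E)$ at every finite place. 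Restricting to the unit idèles $\prod_{v}\mathcal{O}_{K,v}^\times$ (harmless for a lower bound on $G_\infty$; this is also why no $h(K)$ appears), the character $\varepsilon$ contributes a \emph{single global} factor dividing $|\mu(E)|\le 6$ rather than one factor per bad prime, and the remaining local-norm map has cokernel of order dividing $[K:E]$ by the global norm-cokernel theorem \cite[Theorem 7 on p.~161]{MR2467155}, giving $6[K:E]=3[K:\mathbb{Q}]$. That computation is the content you would need to supply in place of "I would invoke the rigidity of the CM representation."
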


As a by-product of the proof of theorem \ref{thm_Finale_Intro} we also obtain the following proposition, which slightly strengthens a result first proved by Banaszak, Gajda and Kraso\'n (\cite[Theorem A]{MR1971250}) by removing both the assumption that the CM type of $A$ is nondegenerate and the hypothesis that $\ell$ is completely split in $K$.

\begin{proposition}\label{prop_BGK_Intro}{(Proposition \ref{prop_BGK})}
Let $A/K$ be an absolutely simple abelian variety admitting complex multiplication (over $K$) by the CM field $E$, and let $\ell$ be a prime unramified in $E$. Let $E^*$ be the reflex field of $E$ and suppose that $A$ has good reduction at all places of $K$ of characteristic $\ell$.
\begin{itemize}
\item The index $[\MT(A)(\mathbb{F}_\ell):G_\ell \cap \MT(A)(\mathbb{F}_\ell)]$ divides $[\fieldExtension] \cdot |F|$.
\item If $\ell$ is also unramified in $K$, then $[\MT(A)(\mathbb{F}_\ell):G_\ell \cap \MT(A)(\mathbb{F}_\ell)]$ divides $|F|$.
\end{itemize}
\end{proposition}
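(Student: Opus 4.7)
The plan is to emulate the proof of Theorem \ref{thm_Finale_Intro}(3), working directly with $\mathbb{F}_\ell$-points and exploiting good reduction at $\ell$ to make the local representation at places above $\ell$ unramified. First I would use the main theorem of complex multiplication of Shimura--Taniyama and Serre--Tate to identify $\rho_{\ell^\infty}$ with the composition
\[
\abGal{K} \xrightarrow{\operatorname{rec}_K^{-1}} \pi_0\bigl(\mathbb{A}_K^{\times}/K^{\times}\bigr) \xrightarrow{N_{K/E^*}} \pi_0\bigl(\mathbb{A}_{E^*}^{\times}/(E^*)^{\times}\bigr) \xrightarrow{N_{\Phi^*}} T_E(\mathbb{Z}_\ell),
\]
whose image lies in $\MT(A)(\mathbb{Z}_\ell)$. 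Good reduction at each $\lambda \mid \ell$ of $K$ makes this map unramified at such $\lambda$, so after reducing modulo $\ell$ the intersection $G_\ell \cap \MT(A)(\mathbb{F}_\ell)$ contains $N_{\Phi^*}(U)$, where $U \subseteq T_{E^*}(\mathbb{F}_\ell)$ is the subgroup generated by the mod-$\ell$ reductions of the local norms $N_{K_\lambda/E^*_{\lambda^*}}(\mathcal{O}_{K_\lambda}^{\times})$ for $\lambda \mid \ell$, together with Chebotarev contributions from Frobenii away from $\ell$.

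Next I would bound the cokernel of $N_{\Phi^*} \colon T_{E^*}(\mathbb{F}_\ell) \to \MT(A)(\mathbb{F}_\ell)$ by $|F|$. Realising the kernel of $N_{\Phi^*} \colon T_{E^*} \to \MT(A)$ as an extension of the finite étale group scheme $F$ by its connected component, which is a torus, and applying Lang's theorem to kill $H^1(\mathbb{F}_\ell, -)$ of the connected part, the long exact sequence in Galois cohomology yields an injection
\[
\operatorname{coker}\bigl(T_{E^*}(\mathbb{F}_\ell) \to \MT(A)(\mathbb{F}_\ell)\bigr) \hookrightarrow H^1(\mathbb{F}_\ell, F),
\]
and the right-hand group has order dividing $|F|$.

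Finally I would estimate the index of $U$ inside $T_{E^*}(\mathbb{F}_\ell)$. Since $\ell$ is unramified in $E$ it is also unramified in $E^*$ (the latter sits in the Galois closure of $E/\mathbb{Q}$, and unramifiedness ascends to Galois closures), so $T_{E^*}(\mathbb{F}_\ell) = \prod_{\lambda^* \mid \ell} k_{\lambda^*}^{\times}$ and the reduction map $\prod_{\lambda^*} \mathcal{O}_{E^*_{\lambda^*}}^{\times} \twoheadrightarrow T_{E^*}(\mathbb{F}_\ell)$ is surjective. If $\ell$ is moreover unramified in $K$, each residue extension $k_\lambda / k_{\lambda^*}$ is an extension of finite fields, for which the norm is surjective; combining with Chebotarev density, $U = T_{E^*}(\mathbb{F}_\ell)$, so only the factor $|F|$ remains. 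In general, the identity $[\fieldExtension] = \sum_{\lambda \mid \lambda^*} e_\lambda f_\lambda$ (summed one $\lambda^*$ at a time) bounds the residual cokernel by $[\fieldExtension]$, producing the announced bound $[\fieldExtension] \cdot |F|$. The principal technical obstacle I foresee lies in the first step: checking carefully that the local components of the adelic reflex-norm description reduce mod $\ell$ to exactly the description above, and in particular that no spurious root-of-unity factor $\mu^*$ enters (such a factor does appear in Theorem \ref{thm_Finale_Intro}(3) precisely in the absence of good reduction at $\ell$).
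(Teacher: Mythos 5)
Your proposal is correct in outline, but it proves the bound by a genuinely different route from the paper. The paper's proof of Proposition~\ref{prop_BGK} is extremely short and simply reuses the $\mathbb{Z}_\ell$-level machinery already assembled for Theorem~\ref{thm_Finale}: under the hypothesis of good reduction, $\mu^*=1$, so by Proposition~\ref{prop_Epsilon} together with equations~\eqref{eq_Index} and~\eqref{eq_divisibility} the index $[\MT(A)(\mathbb{Z}_\ell):G_{\ell^\infty}\cap\MT(A)(\mathbb{Z}_\ell)]$ divides $[\fieldExtension]\cdot|F|\cdot|F|_\ell^{-2^g g!}$, and by surjectivity of the reduction map the same bound holds for the index of $G_\ell\cap\MT(A)(\mathbb{F}_\ell)$ in $\MT(A)(\mathbb{F}_\ell)$. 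The crux of the paper's argument is then a one-line observation: since $\ell$ is unramified in $E$, Proposition~\ref{prop_GoodReductionNumberFieldTori} makes $\MT(A)$ a smooth $\mathbb{F}_\ell$-torus, so $|\MT(A)(\mathbb{F}_\ell)|$ is prime to $\ell$; as the index must divide this order, the spurious $\ell$-power factor $|F|_\ell^{-2^g g!}$ drops out, leaving exactly $[\fieldExtension]\cdot|F|$. The second bullet follows identically via Remark~\ref{rmk_NormTotallySplit}. Your alternative \emph{does} buy something: by applying Lang's theorem directly to the exact sequence $1\to\ker\Phi\to T_{E^*}\to\MT(A)\to 1$ over $\mathbb{F}_\ell$, you obtain the injection $\operatorname{coker}\bigl(T_{E^*}(\mathbb{F}_\ell)\to\MT(A)(\mathbb{F}_\ell)\bigr)\hookrightarrow H^1(\mathbb{F}_\ell,F)$ without invoking Proposition~\ref{prop_UnramifiedBound} or the Herbrand-quotient bookkeeping of Section~\ref{sect_CohomologyAndTori}, and without ever mentioning the stray $\ell$-power. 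The cost is that one has to be a bit more careful about étale versus fppf cohomology when $\ell\mid|F|$ (here saved because $H^1_{\mathrm{fppf}}(\mathbb{F}_\ell,\mu_{\ell^k})=0$), and one must recover the $[\fieldExtension]$ factor by passing the local-norm cokernel estimate of Theorem~\ref{thm_CokernelOfNorm} from $T_{E^*}(\mathbb{Z}_\ell)$ down to $T_{E^*}(\mathbb{F}_\ell)$, since $T_K$ itself need not have good reduction at $\ell$ in the first bullet. Your worry about a spurious $\mu^*$ is resolved exactly as in the paper: Theorem~\ref{thm_FundamentalTheoremCM} shows $\varepsilon$ is trivial on $\mathcal{O}_{K,v}^\times$ at all good-reduction places, which under the hypothesis covers every $v\mid\ell$, so $J_\ell=T_K(\mathbb{Z}_\ell)$ and no root-of-unity factor intervenes.
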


Let us conclude this introduction by giving a brief overview of the material in the paper.

In section \ref{sect_PrelAlgTori} we recall some fundamental notions about algebraic tori over $\mathbb{Q}$ and their $\mathbb{Z}_\ell$-points; this part also includes a brief account of the theory of abelian varieties of CM type and of their Mumford-Tate groups. In section \ref{sect_CohomologyAndTori} we apply cohomological machinery to study the map induced on $\mathbb{Z}_\ell$-points by algebraic maps between $\mathbb{Q}$-tori with good reduction at $\ell$. With more effort, the method could also give results in the bad reduction setting, but the argument would become quite cumbersome and the result would not be very satisfactory for our purposes. To remedy this situation, in section \ref{sect_Geometry} we treat the case of arbitrary reduction through a purely geometric argument inspired by \cite{MR2947946}; it should be pointed out, however, that -- in the good reduction setting -- the cohomological approach gives much sharper bounds. In section \ref{sect_FinalProofs} we recall a form of the Fundamental Theorem of Complex Multiplication, which gives a complete description of the Galois representations attached to $A$, and apply it to deduce theorem \ref{thm_Finale_Intro}. In section \ref{sect_NondegenerateCase} we give bounds on the order of $\MT(A)(\mathbb{Z}/\ell^n\mathbb{Z})$ under the assumption that $A$ is of nondegenerate type, i.e. that $\operatorname{rank} \MT(A)=\dim A+1$. Finally, in the short section \ref{sect_Examples} we give a simple example that shows that the optimal bound on $\ell^{n \operatorname{rank} \MT(A)} \bigm/ [K(A[\ell^n]):K]$ grows at least exponentially fast in $g$, so that our bounds are not too far from the truth.

\medskip

\noindent\textbf{Acknowledgments.} I thank Nicolas Ratazzi for his unending support and invaluable advice. I am also grateful to Jacob Tsimerman for his help with understanding parts of his paper \cite{MR2947946}.

\section{Preliminaries on algebraic tori}\label{sect_PrelAlgTori}
Recall that over a perfect field $k$ there is an equivalence of categories between algebraic tori and finitely generated, torsion-free, continuous $\abGal{k}$-modules: if $T$ is a $k$-torus, the corresponding $\abGal{k}$-module is the group of characters $\hat{T}=\operatorname{Hom}\left( T_{\overline{k}} , \mathbb{G}_{m,\overline{k}} \right)$. Also recall that this construction extends to an equivalence between finitely generated, continuous $\abGal{k}$-modules and $k$-group schemes of multiplicative type; we will make use of this fact to study the kernel of the reflex norm.
We now introduce a family of $\mathbb{Q}$-algebraic tori that will be especially relevant for us:
\begin{definition}
If $E$ is any number field we set $T_E = \operatorname{Res}_{E/\mathbb{Q}} (\mathbb{G}_{m,\mathbb{Q}})$.
\end{definition}
The torus $T_E$ is of rank $[E:\mathbb{Q}]$, and it admits a very simple description in terms of characters: it is the $\mathbb{Q}$-torus that corresponds to the free module over the set $\operatorname{Hom}(E,\overline{\mathbb{Q}})$, endowed with its natural (right) $\abGal{\mathbb{Q}}$-action.

\begin{proposition}\label{prop_GoodReductionNumberFieldTori} Let $E$ be a number field. The torus $T_E$ has good reduction at all the primes not dividing $\operatorname{disc}(E)$.
\end{proposition}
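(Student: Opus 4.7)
The plan is to use the Galois-theoretic criterion for good reduction of tori: a $\mathbb{Q}$-torus $T$ has good reduction at a prime $p$ if and only if the inertia subgroup $I_p \subset \abGal{\mathbb{Q}}$ acts trivially on the character module $\hat{T}$. Granting this standard criterion, the result becomes a purely algebraic-number-theoretic statement about the action of inertia on $\operatorname{Hom}(E, \overline{\mathbb{Q}})$.

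Concretely, I would proceed in three steps. First, I would recall (or cite from SGA~3 or the classical references on Néron models of tori) the criterion above; the equivalence of categories between tori and character modules, already used in the paper, means that good reduction at $p$ translates into unramifiedness of the Galois action on $\hat{T}$. Second, I would specialise to $T = T_E$: its character module is $\hat{T}_E = \mathbb{Z}[\operatorname{Hom}(E,\overline{\mathbb{Q}})]$ equipped with its natural $\abGal{\mathbb{Q}}$-action, so the Galois action on $\hat{T}_E$ is permutation and factors through $\operatorname{Gal}(\tilde{E}/\mathbb{Q})$, where $\tilde{E}$ is the Galois closure of $E$ in $\overline{\mathbb{Q}}$. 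Third, I would observe that a prime $p$ is ramified in $\tilde{E}$ if and only if it is ramified in $E$, which happens if and only if $p \mid \operatorname{disc}(E)$. Consequently, if $p \nmid \operatorname{disc}(E)$ then $I_p$ acts trivially on $\operatorname{Gal}(\tilde{E}/\mathbb{Q})$ and hence on $\hat{T}_E$, yielding good reduction at $p$.

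There is essentially no obstacle here beyond the desire to avoid re-proving the reduction criterion from scratch; the heavy lifting has been packaged into the equivalence of categories between tori and character modules together with its integral refinement over $\mathbb{Z}_p$. As a sanity check (and an alternative proof route, if one prefers a hands-on argument) one can write down an explicit smooth torus model: over the ring $R = \mathbb{Z}[1/\operatorname{disc}(E)]$ the extension $\mathcal{O}_E \otimes_{\mathbb{Z}} R$ is finite étale over $R$, so the Weil restriction $\operatorname{Res}_{(\mathcal{O}_E \otimes R)/R}(\mathbb{G}_m)$ is a torus over $R$ whose generic fibre is $T_E$. This exhibits, for every $p \nmid \operatorname{disc}(E)$, a smooth torus model over $\mathbb{Z}_{(p)}$ and thus confirms good reduction without invoking the cohomological criterion. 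I would include this explicit model either as the main argument or as a remark, depending on which tone the paper prefers.
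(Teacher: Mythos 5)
Your primary argument is exactly the paper's: invoke the Galois (inertia) criterion for good reduction, note that $\widehat{T_E}$ is the permutation module on $\operatorname{Hom}(E,\overline{\mathbb{Q}})$ so the action factors through $\operatorname{Gal}(\tilde E/\mathbb{Q})$, and use that a prime unramified in $E$ is unramified in its Galois closure. The alternative you sketch at the end -- exhibiting the explicit smooth model $\operatorname{Res}_{(\mathcal{O}_E\otimes R)/R}(\mathbb{G}_m)$ over $R=\mathbb{Z}[1/\operatorname{disc}(E)]$, using that $\mathcal{O}_E\otimes R$ is finite \'etale over $R$ -- is a correct and genuinely different (more hands-on) route that bypasses the reduction criterion entirely, but the paper does not take it.
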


\begin{proof}
By the Galois criterion (\cite[Proposition 1.1]{MR1129520}), $T_E$ has good reduction at $\ell$ if and only if the inertia group at (a place of $\overline{\mathbb{Q}}$ over) $\ell$ acts trivially on $\widehat{T_E}$. In the present case $\widehat{T_E}$ is the free module over $\operatorname{Hom}(E,\overline{\mathbb{Q}})$, so if we let $L$ be the Galois closure of $E$ in $\overline{\mathbb{Q}}$ the action of $\abGal{\mathbb{Q}}$ on $\widehat{T_E}$ factors through its finite quotient $\operatorname{Gal}(L/\mathbb{Q})$. Now if a prime $\ell$ is unramified in $E$ it is also unramified in $L$, hence the inertia at $\ell$ has trivial image in $\operatorname{Gal}(L/\mathbb{Q})$ and $T_E$ has good reduction at $\ell$, as claimed.
\end{proof}

\subsection{Points of tori with values in $\mathbb{Z}_\ell$ and $\mathbb{Z}/\ell^n \mathbb{Z}$}
We briefly discuss the various possible definitions for the group of $\mathbb{Z}_\ell$-valued points of a $\mathbb{Q}_\ell$-torus; our main reference for this section is \cite[§2]{MR608640}. Let $T$ be a $\mathbb{Q}_\ell$-torus, not necessarily having good reduction over $\mathbb{F}_\ell$. We fix a finite Galois extension $L$ of $\mathbb{Q}_\ell$ that splits $T$, and we regard $\hat{T}$ as a $\Gamma$-module, where $\Gamma:=\operatorname{Gal}(L/\mathbb{Q}_\ell)$. Also notice that a character $\chi \in \hat{T}$ can in particular be considered as a homomorphism $\chi : T(L) \to L^\times$.

\begin{definition}\label{def_ZlPoints}
Following Ono (cf.~\cite[§2]{MR0124326}), we define $T(\mathbb{Z}_\ell)$ to be $\operatorname{Hom}_\Gamma\left(\hat{T}, \mathcal{O}_L^\times \right)$, the group of $\Gamma$-equivariant morphisms (of abelian groups) of $\hat{T}$ in $\mathcal{O}_L^\times$. Equivalently, $T(\mathbb{Z}_\ell)$ is the maximal compact subgroup of $T(\mathbb{Q}_\ell)$.
\end{definition}

If furthermore we suppose that $T$ has good reduction, then it is known (\cite[Theorem 2 on p.109]{MR1634406}) that there exists a $\mathbb{Z}_\ell$-model $\mathcal{T}$ of $T$ (that is, a commutative smooth group scheme over $\operatorname{Spec}(\mathbb{Z}_\ell)$ whose generic fiber is $T$). As pointed out in \cite[Remark 2.2]{MR608640}, in this case the $\mathbb{Z}_\ell$-points of $T$ in the sense of Ono agree with the $\mathbb{Z}_\ell$-valued points of $\mathcal{T}$, so that we are free to use whichever definition we find more convenient.
When a smooth model $\mathcal{T}$ exists we can also give the following definition:
\begin{definition}
If $T$ has good reduction, the $\mathbb{Z}/\ell^n\mathbb{Z}$-points of $T$ are the $\mathbb{Z}/\ell^n\mathbb{Z}$-valued points of its smooth $\mathbb{Z}_\ell$-model $\mathcal{T}$. 
\end{definition}

We still need to discuss the meaning of $T(\mathbb{Z}/\ell^n\mathbb{Z})$ when $T$ does \textit{not} have good reduction. The construction in this case is again due to Ono. For $n \geq 0$, we define subgroups of $T(\mathbb{Q}_\ell)$ by the rule
\[
T(1+\ell^n\mathbb{Z}_\ell) = \left\{ x\in T(\mathbb{Q}_\ell) \bigm\vert v_\ell(\chi(x)- 1) \geq n \quad \forall \chi \in \hat{T} \right\}.
\]
We simply write $T(\mathbb{Z}_\ell)$ for the group corresponding to $n=0$: it can be easily checked that this definition agrees with our previous ones. We can now set $\displaystyle 
T(\mathbb{Z}/\ell^n\mathbb{Z})= \frac{T(\mathbb{Z}_\ell)}{T(1+\ell^n \mathbb{Z}_\ell)}$; once again, when $T$ has a smooth $\mathbb{Z}_\ell$-model $\mathcal{T}$, the group $T(\mathbb{Z}/\ell^n\mathbb{Z})$ agrees with $\mathcal{T}(\mathbb{Z}/\ell^n\mathbb{Z})$.
Finally, when $T$ is a $\mathbb{Q}$-torus we define $T(\mathbb{Z}/\ell^n\mathbb{Z})$ to be the group of $\mathbb{Z}/\ell^n\mathbb{Z}$-points of $T \otimes \mathbb{Q}_\ell$. We conclude this discussion with the following well-known lemma:

\begin{lemma}\label{lemma_Nicolas} Let $T/\mathbb{Q}_\ell$ have good reduction. For every positive integer $n$ we have
\[
(1-1/\ell)^{\dim T} \ell^{n \dim T} \leq \left| T(\mathbb{Z}/\ell^n\mathbb{Z}) \right| \leq (1+1/\ell)^{\dim T} \ell^{n \dim T}.
\]
\end{lemma}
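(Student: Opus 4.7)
The plan is to reduce the estimate on $|T(\mathbb{Z}/\ell^n\mathbb{Z})|$ to an estimate on $|T(\mathbb{F}_\ell)|$ via smoothness of the $\mathbb{Z}_\ell$-model, and then to compute the latter using the standard character formula for tori over finite fields.

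Write $d=\dim T$ and let $\mathcal{T}$ be the smooth $\mathbb{Z}_\ell$-model of $T$ (which exists by the good reduction hypothesis). First, I would invoke the infinitesimal lifting property of smooth schemes to see that the reduction morphism
\[
\mathcal{T}(\mathbb{Z}/\ell^n\mathbb{Z}) \twoheadrightarrow \mathcal{T}(\mathbb{F}_\ell)
\]
is surjective and that, for each $1\leq i \leq n-1$, the kernel $\ker\!\left(\mathcal{T}(\mathbb{Z}/\ell^{i+1}\mathbb{Z}) \to \mathcal{T}(\mathbb{Z}/\ell^i\mathbb{Z})\right)$ is a torsor under the $\mathbb{F}_\ell$-vector space $\mathrm{Lie}(\mathcal{T}_{\mathbb{F}_\ell}) \cong \mathbb{F}_\ell^d$ and hence has exactly $\ell^d$ elements. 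Telescoping these quotients yields the clean formula
\[
|\mathcal{T}(\mathbb{Z}/\ell^n\mathbb{Z})| = \ell^{d(n-1)} \cdot |\mathcal{T}(\mathbb{F}_\ell)|,
\]
so the lemma is reduced to proving $(\ell-1)^d \leq |\mathcal{T}(\mathbb{F}_\ell)| \leq (\ell+1)^d$.

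For the remaining estimate I would use the standard counting formula for a torus $\overline{\mathcal{T}}/\mathbb{F}_\ell$: letting $X_*$ denote its cocharacter module and $F$ the geometric Frobenius acting on $X_* \otimes \mathbb{Q}$, one has $|\overline{\mathcal{T}}(\mathbb{F}_\ell)| = |\det(F-1\mid X_*\otimes\mathbb{Q})|$. The action of $F$ is $\ell\cdot \sigma$ where $\sigma$ is the (finite order) action of Frobenius coming from the Galois structure on $X_*$, so $F$ has eigenvalues $\ell\zeta_1,\dots,\ell\zeta_d$ with each $\zeta_i$ a root of unity. Therefore
\[
|\overline{\mathcal{T}}(\mathbb{F}_\ell)| = \prod_{i=1}^d |\ell\zeta_i - 1| = \prod_{i=1}^d |\ell - \zeta_i^{-1}|,
\]
which is a real number (the product is Galois-invariant), and the bound $|\zeta_i^{-1}|=1$ gives $\ell-1 \leq |\ell-\zeta_i^{-1}| \leq \ell+1$ for every factor.

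Combining the two steps yields
\[
\ell^{d(n-1)}(\ell-1)^d \;\leq\; |T(\mathbb{Z}/\ell^n\mathbb{Z})| \;\leq\; \ell^{d(n-1)}(\ell+1)^d,
\]
which is exactly the claimed inequality after factoring out $\ell^{nd}$. There is no real obstacle; the only point that requires a touch of care is verifying that the character-theoretic formula for $|\overline{\mathcal{T}}(\mathbb{F}_\ell)|$ applies to the reduction $\overline{\mathcal{T}}$ of our smooth $\mathbb{Z}_\ell$-model, which follows from the fact that good reduction means $\overline{\mathcal{T}}$ is again a torus with character module $\widehat{T}$ carrying the inertia-trivial Galois action (Galois criterion, as in Proposition~\ref{prop_GoodReductionNumberFieldTori}).
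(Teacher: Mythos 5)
Your argument is correct, and it is essentially the same proof the paper outlines (the paper only cites references): the telescoping of the smooth reduction maps is exactly the ``Hensel's lemma'' step the paper invokes, and the bound $(\ell-1)^{\dim T}\le |\mathcal T(\mathbb F_\ell)|\le(\ell+1)^{\dim T}$ via the characteristic polynomial of Frobenius on the cocharacter lattice is the content of the cited result of Voskresenskii; you have simply supplied the details that the paper delegates to the references.
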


\begin{proof}
A combination of Hensel's lemma and \cite[Theorem 2 on p.104]{MR1634406}; for further details, we refer the reader to \cite[Lemme 2.1 and Proposition 2.2]{MR2862374}.
\end{proof}

\subsection{CM types and reflex norm}

We briefly recall the notions of CM type, of reflex type, and of reflex norm; we refer the reader to \cite[§3]{MR608640} for further details. Let $E$ be a CM field of degree $2g$ and $\tilde{E}$ be its Galois closure in $\overline{\mathbb{Q}}$, and write $G,H$ for the Galois groups $\operatorname{Gal}(\tilde{E}/\mathbb{Q})$ and $\operatorname{Gal}(\tilde{E}/E)$ respectively. We denote by $\tau$ the complex conjugation of $\mathbb{C}$, or any of its restrictions, and we take the convention that the set $\operatorname{Hom}(E,\overline{\mathbb{Q}})$ be identified with the coset space $H\backslash G$.

\begin{lemma}\label{lemma_CMGaloisClosure}
The degree $[\tilde{E}:\mathbb{Q}]$ divides $2^g g!$.
\end{lemma}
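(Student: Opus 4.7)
The plan is to exhibit a faithful action of $G = \operatorname{Gal}(\tilde{E}/\mathbb{Q})$ on the set of $2g$ embeddings $\operatorname{Hom}(E,\overline{\mathbb{Q}})$ that preserves a natural pairing into $g$ pairs, which realises $G$ as a subgroup of the hyperoctahedral (signed permutation) group $(\mathbb{Z}/2\mathbb{Z})^g \rtimes S_g$ of order $2^g g!$. The divisibility claim is then immediate by Lagrange's theorem.

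First I would establish that complex conjugation $\tau$ extends to a central element of order $2$ in $G$. The key input is that the Galois closure of a CM field is again a CM field: indeed, $\tilde{E}$ is the compositum of the conjugate subfields $\sigma(E) \subset \overline{\mathbb{Q}}$, each of which is CM (hence closed under $\tau$), so $\tilde{E}$ itself is stable under $\tau$ and admits the totally real subfield $\tilde{E}^{\tau}$ of index $2$. To see that the resulting involution is central in $G$, note that $\tilde{E}^\tau$ is the unique maximal totally real subfield of $\tilde{E}$, and any $\sigma \in G$ sends totally real elements to totally real elements (the image has the same minimal polynomial over $\mathbb{Q}$, whose roots remain real); hence $\sigma$ preserves $\tilde{E}^\tau$, which forces $\sigma \tau \sigma^{-1} = \tau$ since both involutions are determined by their common fixed field.

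Next I would set up the permutation action. Identifying $\operatorname{Hom}(E,\overline{\mathbb{Q}})$ with $H\backslash G$ as in the paper, $G$ acts on this set of size $2g$ by right multiplication, and this action is faithful because $\tilde{E}/\mathbb{Q}$ is Galois, so $\bigcap_{\sigma \in G} \sigma H \sigma^{-1} = \{1\}$. Since $\tau$ is central, left multiplication by $\tau$ commutes with right multiplication by $G$, so $\tau$ defines a $G$-equivariant fixed-point-free involution on $H \backslash G$; it partitions the $2g$ embeddings into $g$ complex-conjugate pairs that are permuted as blocks by $G$. Thus $G$ embeds into the stabiliser of this pairing inside $S_{2g}$, which is precisely the wreath product $(\mathbb{Z}/2\mathbb{Z}) \wr S_g$ of order $2^g g!$.

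The main things to pin down carefully are the two structural facts about CM fields, namely that $\tilde{E}$ is CM and that $\tau$ is central — but both follow from completely standard considerations about totally real subfields and minimal polynomials, so I do not expect any real obstacle. Once those are in place, the combinatorial count of the hyperoctahedral group closes the proof.
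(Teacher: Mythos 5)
Your proof is correct and takes a genuinely different route from the paper's. The paper's argument is purely field-theoretic and avoids any discussion of complex conjugation on $\tilde{E}$: writing $E = E_0(\sqrt{a})$ with $E_0$ totally real, it notes $\tilde{E} = \tilde{E_0}(\sqrt{a_1},\dots,\sqrt{a_k})$ where $a_1,\dots,a_k$ are the $k \leq g$ conjugates of $a$, so that $[\tilde{E}:\tilde{E_0}] \mid 2^k$ by Kummer theory and $[\tilde{E_0}:\mathbb{Q}] \mid g!$, giving the bound directly. Your argument instead realises $\operatorname{Gal}(\tilde{E}/\mathbb{Q})$ as a subgroup of the hyperoctahedral group $(\mathbb{Z}/2\mathbb{Z}) \wr S_g$ via its action on the $2g$ embeddings paired up by the central involution $\tau$. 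This is conceptually more informative — it identifies \emph{where} $G$ sits inside $S_{2g}$, not merely a bound on its order — at the cost of invoking the lemma that the Galois closure of a CM field is again CM. On that point, be careful: your parenthetical justification (``$\tilde{E}$ is the compositum of the $\sigma(E)$, each CM, so $\tilde{E}$ is stable under $\tau$ and admits a totally real index-$2$ subfield'') establishes that $\tau$ restricts to an involution of $\tilde{E}$ (which is automatic, since $\tilde{E}/\mathbb{Q}$ is Galois), but it does not by itself show that $\tilde{E}^\tau$ is \emph{totally real} — which is exactly the content of ``$\tilde{E}$ is CM'' and what your centrality argument in the next sentence relies on. Since you then use the fact that $\tilde{E}^\tau$ is the set of all totally real elements to deduce centrality of $\tau$, and conversely centrality of $\tau$ is equivalent to $\tilde{E}$ being CM, there is a circularity risk in treating these as independent steps. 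The clean fix is to cite the standard fact outright that a compositum (equivalently, Galois closure) of CM fields is CM, or to prove centrality directly by checking that $\tau_0$ and $\sigma\tau_0\sigma^{-1}$ agree on each generating subfield $\sigma'(E)$ using the CM property of $\sigma'(E)$. With that repair the wreath-product argument is complete and pleasantly structural; the paper's Kummer-theoretic proof, by contrast, is shorter and entirely elementary.
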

\begin{proof}
Let $E_0$ be the maximal totally real subfield of $E$ and $a \in E_0$ be such that $E=E_0(\sqrt{a})$. Let $\tilde{E_0}$ be the Galois closure of $E_0$ and $a_1=a,\ldots,a_k \in \tilde{E_0}$ be the conjugates of $a$ over $\mathbb{Q}$, where $k \leq [E_0:\mathbb{Q}]=g$. It is clear that $\tilde{E}$ is generated over $\tilde{E_0}$ by $\sqrt{a_1}, \ldots, \sqrt{a_k}$, so $[\tilde{E}:\mathbb{Q}]$ divides $[\tilde{E_0}:\mathbb{Q}] \cdot 2^k$. As $[\tilde{E_0}:\mathbb{Q}] \bigm\vert g!$ and $k \leq g$ the lemma follows.
\end{proof}

\begin{definition}
A CM-type for the CM field $E$ is a subset $S$ of $H \backslash G$ such that $S \cap \tau(S) = \emptyset$ and $H \backslash G = S \cup \tau(S)$.
\end{definition}

Let $S$ be a CM type for $E$ and $\tilde{S}$ be the inverse image of $S$ in $G$, i.e. $\tilde{S}=\left\{g \in G \bigm\vert Hg \in S \right\}$. We set
$
H' = \left\{ g \in G \bigm\vert \tilde{S}g=\tilde{S} \right\}
$
and let $E^*$ be the fixed field of $H'$; we then set $\tilde{R} = \left\{ s^{-1} \bigm\vert s \in \tilde{S} \right\}$ and let $R$ be the image of $\tilde{R}$ in $H' \backslash G \cong \operatorname{Hom}\left(E^*,\overline{\mathbb{Q}}\right)$. It is not hard to check that $R$ is a CM type for $E^*$.
\begin{definition} The pair $(E^*,R)$ is called the \textbf{reflex type} of $(E,S)$.
\end{definition}
Finally, a CM type $(E,S)$ is called simple if the equality
\[
H=\left\{ g \in G \bigm\vert g\tilde{S}=\tilde{S} \right\}
\]
holds. We are now ready to define the reflex norm:
\begin{definition}\label{def_ReflexNorm}
Let $(E,S)$ be a CM type, $\tilde{E}$ the Galois closure of $E/\mathbb{Q}$ and $(E^*,R)$ the reflex type of $(E,S)$. The \textbf{reflex norm} associated with $(E,S)$ is the $\mathbb{Q}$-morphism 
\[
\Phi_{(E,S)}: T_{E^*} \to T_E
\]
of algebraic tori given on characters by
\[
\begin{array}{ccccc} \Phi_{(E,S)}^* & : & \widehat{T_E} & \to & \widehat{T_E^*} \\
& & [g] & \mapsto & \sum_{r \in R} [rg],
\end{array}
\]
where $[g]$ (resp.~$[rg]$) is the embedding of $E$ (resp.~$E^*$) in $\overline{\mathbb{Q}}$ induced by the automorphism $g \in \operatorname{Gal}(\tilde{E}/\mathbb{Q})$ (resp.~$rg \in \operatorname{Gal}(\tilde{E}/\mathbb{Q})$).
\end{definition}

\subsection{The Mumford-Tate group}\label{sect_MT}
Our interest in the reflex norm stems from the fact that it allows us to define the Mumford-Tate group of a CM abelian variety rather directly. Before doing so, however, we need to recall how one associates a CM type with a CM abelian variety.

Let $A/K$ be an absolutely simple abelian variety, admitting complex multiplication (over $\overline{K}$) by the field $E$. The tangent space at the identity of $A_{\overline{K}}$ is a $\overline{K}$-module and an $E$-module, and the two actions are compatible: it follows that this tangent space is a $(E \otimes \overline{K})$-bimodule, so it decomposes as $T_{\operatorname{id}} A_{\overline{K}} \cong \prod_{\varphi \in S} \overline{K}_{\varphi}$, where $\overline{K}_{\varphi}$ is a 1-dimensional $\overline{K}$-vector space on which $E$ acts through the embedding $\varphi:E \hookrightarrow \overline{K}$. The set $S$ of embeddings that appear in this decomposition can be shown to be a CM type for $E$, and in this case we say that $A$ admits complex multiplication by the CM type $(E,S)$. When furthermore we have $\operatorname{End}_K(A)=\operatorname{End}_{\overline{K}}(A)$ we say that $A$ admits complex multiplication by $(E,S)$ \textit{over $K$}.

\begin{definition}\label{def_MT}
Let $A/K$ be an absolutely simple abelian variety admitting complex multiplication (over $\overline{K}$) by the CM type $(E,S)$, and let $(E^*,R)$ be the reflex type. We define the Mumford-Tate torus $\MT(A)$ to be the image of the reflex norm $\Phi_{(E,S)} :T_{E^*} \to T_E$.
\end{definition}

\begin{remark}
The Mumford-Tate group of $A$ is in fact a purely geometric object -- it can described in terms of the Hodge structure associated with the complex abelian variety $A_\mathbb{C}$. In particular, it is insensitive to extensions of the base field $K$.
\end{remark}

\begin{remark}
It is known that the rank of $\MT(A)$ is at most $g+1$. When equality holds, the CM type is said to be nondegenerate, and the Mumford-Tate group has a very simple description in terms of $E$: if $\tau$ denotes complex conjugation on $E$, for any $\mathbb{Q}$-algebra $B$ the $B$-points of $\MT(A)$ are given by
\[
\MT(A)(B)=\left\{ x \in \left(E \otimes_{\mathbb{Q}} B\right)^\times \bigm\vert x \tau(x) \in B^\times \right\}.
\]

For all these facts see for example \cite{MR608640}, Proposition 3.3 and the remarks following it.
\end{remark}

\subsection{The group of connected components of $\ker \Phi_{(E,S)}$}
An object which will be crucial to our study is the kernel of the reflex norm $\Phi_{(E,S)}$: in this short subsection we establish a bound on the order of its group of components. The bound is ultimately a consequence of Hadamard's inequality, which is the main tool used to establish the following lemma:

\begin{lemma}\label{lemma_Det01} Let $A$ be a $n \times n$ integral matrix all of whose entries are in $\left\{0,1\right\}$. The following inequality holds:
\[
\left|\det A\right| \leq \lfloor 2^{-n} (n+1)^{(n+1)/2} \rfloor.
\]
\end{lemma}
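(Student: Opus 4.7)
The plan is to reduce the problem to Hadamard's inequality for $\pm 1$ matrices via a standard bordering trick that relates the maximal determinant of an $n\times n$ matrix with $\{0,1\}$-entries to that of an $(n+1)\times(n+1)$ matrix with $\{\pm 1\}$-entries.

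Concretely, starting from the $n\times n$ 0-1 matrix $A$, I would build an auxiliary $(n+1)\times(n+1)$ matrix $B$ with entries in $\{\pm 1\}$ by setting $B_{1,j}=B_{i,1}=1$ for all $i,j$, and $B_{i+1,j+1}=1-2A_{ij}$ for $1\leq i,j\leq n$ (so the last $n\times n$ block is $+1$ where $A$ has a $0$ and $-1$ where $A$ has a $1$). Subtracting the first row from each of the remaining rows leaves the determinant unchanged and transforms $B$ into a matrix whose first column is $(1,0,\ldots,0)^{T}$ and whose lower-right $n\times n$ block equals $-2A$. Expanding along the first column yields the key identity
\[
\det B = (-2)^{n}\det A, \qquad \text{i.e.}\qquad |\det B| = 2^{n}\,|\det A|.
\]

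Next I would apply Hadamard's inequality to $B$: each of its $n+1$ rows has Euclidean norm $\sqrt{n+1}$, so
\[
|\det B| \;\leq\; (n+1)^{(n+1)/2}.
\]
Combining the two displays gives $|\det A| \leq 2^{-n}(n+1)^{(n+1)/2}$, and since $\det A$ is an integer we may take the floor on the right-hand side, yielding the claimed bound.

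There is no real obstacle here: the only slightly non-obvious point is to hit upon the right bordering (first row and column equal to $1$, together with $B_{i+1,j+1}=1-2A_{ij}$) that turns elementary row operations into the identity $|\det B|=2^{n}|\det A|$. Everything else is Hadamard plus integrality.
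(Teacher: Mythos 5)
Your proof is correct and uses essentially the same bordering-plus-Hadamard argument as the paper: the paper borders $2A$ with a row of ones and a column $(1,0,\ldots,0)^T$ and then subtracts rows to reach a $\{\pm 1\}$ matrix, whereas you write down the $\{\pm 1\}$ matrix directly and subtract rows to recover $\pm 2^n\det A$; these are the same computation read in opposite directions. The key ingredients (the bordering trick, the row reduction giving $|\det B|=2^n|\det A|$, Hadamard's inequality, and integrality of $\det A$) coincide.
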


\begin{proof}
Consider the matrix
\[
B(A)=
\left(
\begin{array}{c|c}
  1 & 1 \cdots 1 \\ \hline
  0 & \raisebox{-15pt}{{\LARGE\mbox{{$2A$}}}} \\[-4ex]
  \vdots & \\[-0.5ex]
  0 &
\end{array}
\right).
\]

It is clear by definition that $\det B(A)=2^n\det(A)$. Consider the matrix $H(A)$ obtained from $B(A)$ by subtracting the first row to each of the others. Clearly $H(A)$ and $B(A)$ have the same determinant, and furthermore all the entries of $H(A)$ are in $\left\{\pm 1\right\}$. In particular, the $L^2$-norm of every row of $H(A)$ is $\sqrt{n+1}$, so Hadamard's inequality implies
\[
\left|\det A \right| = 2^{-n} \left|\det B(A) \right| = 2^{-n} \left|\det H(A) \right| \leq 2^{-n} (n+1)^{(n+1)/2}.
\]
The claim then follows from the fact that $\det(A)$ is an integer.
\end{proof}

\begin{lemma}\label{lemma_OrderGroupConnComponents}
Let $T:\mathbb{Z}^n \to \mathbb{Z}^m$ be a linear map, represented in the standard bases by a matrix $A$ all of whose entries are in $\left\{0,1\right\}$. Let $Y$ be the image of $T$, denote by $k$ the rank of $Y$, and let $Z$ be given by
\[
Z=\left\{z \in \mathbb{Z}^m \bigm\vert \exists q \in \mathbb{Z} \text{ such that } qz \text{ belongs to } Y\right\}.
\]
The quotient $Z/Y$, which is isomorphic to the torsion part of $\mathbb{Z}^m/Y$, has order at most $\lfloor 2^{-k} (k+1)^{(k+1)/2}  \rfloor$.
\end{lemma}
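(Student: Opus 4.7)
The plan is to reduce the statement to an application of Lemma \ref{lemma_Det01} on a well-chosen $k\times k$ minor of $A$. First I would dispatch the easy identification: an element $z \in \mathbb{Z}^m$ has torsion image in $\mathbb{Z}^m/Y$ precisely when some nonzero integer multiple of $z$ lies in $Y$, i.e.\ precisely when $z \in Z$; hence $Z/Y$ is exactly the torsion subgroup of $\mathbb{Z}^m/Y$, and in particular $Z$ is the saturation of $Y$ in $\mathbb{Z}^m$, so $Z$ is a free abelian group of rank $k$ containing $Y$ as a finite-index subgroup.

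The main step is to bound the index $[Z:Y]$. Since $Y$ is generated over $\mathbb{Z}$ by the columns of $A$ and has rank $k$, I would pick $k$ columns of $A$ that are $\mathbb{Q}$-linearly independent; these assemble into an $m\times k$ submatrix $A'$ whose columns generate a rank-$k$ subgroup $Y' \subseteq Y$. The saturation of $Y'$ in $\mathbb{Z}^m$ still equals $Z$, so $[Z:Y] \leq [Z:Y']$. Next I would select $k$ rows of $A'$ whose restriction gives a $k \times k$ submatrix $A''$ with $\det A'' \neq 0$; such rows exist because the columns of $A'$ are linearly independent. Let $\pi \colon \mathbb{Z}^m \to \mathbb{Z}^k$ denote the corresponding coordinate projection.

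Now comes the index computation. Since $Z$ is torsion-free of rank $k$ and $\pi$ restricted to $Y' \subseteq Z$ is injective (its matrix $A''$ has nonzero determinant), the map $\pi|_Z$ is also injective. Thus the multiplicativity of index along the tower $\pi(Y') \subseteq \pi(Z) \subseteq \mathbb{Z}^k$ gives
\[
|\det A''| \;=\; [\mathbb{Z}^k : \pi(Y')] \;=\; [\mathbb{Z}^k : \pi(Z)] \cdot [\pi(Z):\pi(Y')] \;=\; [\mathbb{Z}^k : \pi(Z)] \cdot [Z:Y'],
\]
and in particular $[Z:Y]$ divides $|\det A''|$.

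Finally, the entries of $A''$ lie in $\{0,1\}$ since it is a submatrix of $A$, so Lemma \ref{lemma_Det01} applied to the $k \times k$ matrix $A''$ yields
\[
[Z:Y] \;\leq\; |\det A''| \;\leq\; \lfloor 2^{-k}(k+1)^{(k+1)/2} \rfloor,
\]
which is exactly the claimed bound. The only real subtlety is checking that $\pi|_Z$ is injective (so that indices can be pulled through $\pi$ faithfully); the rest is bookkeeping with linear algebra over $\mathbb{Z}$, and the numerical estimate is outsourced entirely to the previous lemma.
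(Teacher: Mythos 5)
Your proof is correct and takes essentially the same route as the paper: both reduce the bound on $|Z/Y|$ to Lemma \ref{lemma_Det01} applied to a $k \times k$ minor of $A$. The only difference is that the paper invokes the standard fact that $|Z/Y|$ equals the gcd of all $k\times k$ minors (a consequence of Smith normal form), whereas you derive the needed divisibility $[Z:Y] \mid |\det A''|$ from scratch by a careful index computation through the coordinate projection $\pi$ — a slightly more self-contained but otherwise equivalent argument.
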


\begin{proof}
The order of $Z/Y$ is given by
\[
\displaystyle \operatorname{gcd} \left\{ \det(A_k) \bigm\vert A_k \text{ is a minor of } A \text{ of size } k \right\}.
\] Lemma \ref{lemma_Det01} ensures that the determinant of every minor of size $k$ does not exceed $\lfloor 2^{-k} (k+1)^{(k+1)/2} \rfloor$, and the lemma follows.
\end{proof}

\begin{proposition}\label{prop_ConnCompMT}
Let $C$ be the group of multiplicative type defined by the exact sequence
\[
1 \to C \to T_{E^*} \xrightarrow{\Phi_{(E,S)}} \MT(A) \to 1
\]
and let $\hat{C}$ be its character group. Suppose $\MT(A)$ has rank $r$. The torsion subgroup of $\hat{C}$ has order at most $\lfloor 2^{-r} (r+1)^{(r+1)/2}  \rfloor$.
\end{proposition}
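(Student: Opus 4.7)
The plan is to dualise the defining exact sequence and then invoke Lemma \ref{lemma_OrderGroupConnComponents}. Applying the (contravariant) character functor — an equivalence between groups of multiplicative type over $\mathbb{Q}$ and finitely generated continuous $\abGal{\mathbb{Q}}$-modules — to
\[
1 \to C \to T_{E^*} \xrightarrow{\Phi_{(E,S)}} \MT(A) \to 1
\]
yields the exact sequence of $\abGal{\mathbb{Q}}$-modules
\[
0 \to \widehat{\MT(A)} \to \widehat{T_{E^*}} \to \hat{C} \to 0,
\]
so that $\hat{C} \cong \widehat{T_{E^*}}/\widehat{\MT(A)}$. Since $\widehat{T_{E^*}}$ is $\mathbb{Z}$-free of finite rank, the torsion subgroup of $\hat{C}$ equals the quotient $Z/\widehat{\MT(A)}$ appearing in Lemma \ref{lemma_OrderGroupConnComponents}, where $Z$ is the saturation of $\widehat{\MT(A)}$ inside $\widehat{T_{E^*}}$.

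Next I would describe $\widehat{\MT(A)}$ explicitly. Because $\MT(A)$ is by definition the image of $\Phi_{(E,S)}$, its character group is the image inside $\widehat{T_{E^*}}$ of the dual map $\Phi^*_{(E,S)} : \widehat{T_E} \to \widehat{T_{E^*}}$, which on the natural $\mathbb{Z}$-bases of these character modules (indexed by $H\backslash G$ and $H'\backslash G$ respectively) sends $[g]$ to $\sum_{r \in R}[rg]$ by Definition \ref{def_ReflexNorm}.

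The key step — and the only one requiring any argument — is to verify that the matrix of $\Phi^*_{(E,S)}$ in these bases has entries in $\{0,1\}$. The coefficient of $[g'] \in \widehat{T_{E^*}}$ in $\Phi^*_{(E,S)}([g])$ equals the cardinality of $\{r \in R : [rg] = [g']\}$, and this cardinality is at most $1$ because the map $R \to H'\backslash G$, $r \mapsto [rg]$, is injective: if $H' r_1 g = H' r_2 g$, then $H' r_1 = H' r_2$, so $r_1 = r_2$ as elements of $R \subseteq H'\backslash G$.

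With these observations in place the conclusion is immediate: Lemma \ref{lemma_OrderGroupConnComponents} applied to the $\{0,1\}$-matrix representing $\Phi^*_{(E,S)}$, whose image has rank $r = \operatorname{rank} \MT(A)$, bounds the torsion subgroup of $\hat{C}$ by $\lfloor 2^{-r}(r+1)^{(r+1)/2}\rfloor$. I expect no real obstacle: the combinatorial content is entirely absorbed by Lemma \ref{lemma_OrderGroupConnComponents} (which itself rests on the Hadamard-inequality estimate of Lemma \ref{lemma_Det01}), so the only substantive point in the proof proper is the observation that the reflex norm is encoded by a $\{0,1\}$-matrix in the standard bases.
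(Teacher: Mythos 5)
Your proof is correct and follows essentially the same route as the paper: dualise the exact sequence, identify the torsion of $\hat{C}$ with the quotient $Z/Y$ of Lemma~\ref{lemma_OrderGroupConnComponents} where $Y$ is the image of $\Phi^*_{(E,S)}$, observe that the matrix has entries in $\{0,1\}$, and apply the lemma. The only difference is that you spell out the injectivity argument (right multiplication by $g$ is a bijection on $H'\backslash G$) behind the $\{0,1\}$ claim, which the paper simply declares ``apparent from definition~\ref{def_ReflexNorm}.''
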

\begin{proof}
Let $Y$ be the image of $\Phi_{(E,S)}^* : \hat{T}_E \to \hat{T}_{E^*}$ and
\[
Z=\left\{ \chi \in \hat{T}_{E^*} \bigm\vert \exists n \in \mathbb{Z} \text{ such that } n\chi \in Y \right\}.
\]

The torsion subgroup of $\hat{C}$ is isomorphic to $Z/Y$. Moreover, it is apparent from definition \ref{def_ReflexNorm} that the matrix representing $\Phi_{(E,S)}^*$ in the natural bases of $\widehat{T_{E^*}}, \widehat{T_E}$ has entries in $\left\{0,1\right\}$, so the proposition follows from lemma \ref{lemma_OrderGroupConnComponents}.
\end{proof}

\section{Cohomology and integral points of tori}\label{sect_CohomologyAndTori}
The purpose of this section is to study the map induced on $\mathbb{Z}_\ell$-points by a surjection of tori over $\mathbb{Q}_\ell$. More precisely, we let $T \stackrel{\beta}{\longrightarrow} T'' \to 1$ be a surjection of $\mathbb{Q}_\ell$-algebraic tori, and we assume that $T$ has good reduction. We let $T'$ be the kernel of $\beta$, which is in general just a group of multiplicative type (and not necessarily a torus), and write $F$ for the torsion subgroup of its character group $\hat{T'}$. We also denote by $a$ the rank of $T'$, so that we have an isomorphism of abelian groups $\hat{T'}/F \cong \mathbb{Z}^a$. Finally, we fix a finite \textit{unramified} Galois extension $L$ of $\mathbb{Q}_\ell$ that splits $T$, and we let $\Gamma$ denote the Galois group of $L$ over $\mathbb{Q}_\ell$. It is also useful to introduce the following notation:

\smallskip

\noindent\textbf{Notation.} If $n$ is any integer and $\ell$ is a prime we write $|n|_\ell$ for $\ell^{-v_\ell(n)}$. When $M$ is a finite group we also write $|M|_\ell$ for $\ell^{-v_\ell(|M|)}$.

\smallskip

With this notation we shall show:

\begin{proposition}\label{prop_UnramifiedBound}
The cokernel of $T(\mathbb{Z}_\ell) \stackrel{\beta}{\longrightarrow} T''(\mathbb{Z}_\ell)$ has order dividing $|F| \cdot |F|_\ell^{-[L:\mathbb{Q}_\ell]}$.
\end{proposition}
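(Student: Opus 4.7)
The plan is to realise the cokernel of $\beta$ as an $\operatorname{Ext}^1$ group of $\mathbb{Z}[\Gamma]$-modules and then to bound that group using the $\Gamma$-module structure of $\mathcal{O}_L^\times$. Applying the functor $\operatorname{Hom}_{\mathbb{Z}[\Gamma]}(-,\mathcal{O}_L^\times)$ to the short exact sequence of character groups $0 \to \widehat{T''} \to \widehat{T} \to \widehat{T'} \to 0$ yields a long exact sequence whose relevant segment is
\[
T(\mathbb{Z}_\ell) \xrightarrow{\beta} T''(\mathbb{Z}_\ell) \to \operatorname{Ext}^1_{\mathbb{Z}[\Gamma]}(\widehat{T'},\mathcal{O}_L^\times) \to \operatorname{Ext}^1_{\mathbb{Z}[\Gamma]}(\widehat{T},\mathcal{O}_L^\times).
\]
Because $\widehat{T}$ is $\mathbb{Z}$-free, $\operatorname{Hom}_\mathbb{Z}(\widehat{T},-)$ is exact and one has the standard identification $\operatorname{Ext}^i_{\mathbb{Z}[\Gamma]}(\widehat{T},\mathcal{O}_L^\times) = H^i(\Gamma, T(\mathcal{O}_L))$; in particular the rightmost term above is $H^1(\Gamma,T(\mathcal{O}_L))$, which vanishes. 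Indeed, on the smooth $\mathbb{Z}_\ell$-model of $T$ one has the filtration $0 \to \widehat{\mathcal{T}}(\mathfrak{m}_L) \to T(\mathcal{O}_L) \to \overline{T}(k_L) \to 0$: the formal-group part is isomorphic as a $\Gamma$-module to $X_\ast(T)\otimes_\mathbb{Z}\mathcal{O}_L$, which is induced from the trivial subgroup (because the unramifiedness of $L/\mathbb{Q}_\ell$ gives $\mathcal{O}_L\cong\mathbb{Z}_\ell[\Gamma]$ by the normal basis theorem) and is therefore cohomologically trivial, while $H^1$ of the residue-field part vanishes by Lang's theorem. Hence $\operatorname{coker}(\beta)\cong\operatorname{Ext}^1_{\mathbb{Z}[\Gamma]}(\widehat{T'},\mathcal{O}_L^\times)$.

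The same vanishing argument applied to $0 \to F \to \widehat{T'} \to \widehat{T'}/F \to 0$ (the quotient $\widehat{T'}/F$ being the character group of the torus $T'^0$, which also has good reduction since it is split by $L$) gives $\operatorname{Ext}^1_{\mathbb{Z}[\Gamma]}(\widehat{T'}/F,\mathcal{O}_L^\times)=0$, so that $\operatorname{coker}(\beta)$ injects in $\operatorname{Ext}^1_{\mathbb{Z}[\Gamma]}(F,\mathcal{O}_L^\times)$ and it suffices to bound this last Ext by $|F|\cdot|F|_\ell^{-[L:\mathbb{Q}_\ell]}$. For this I would decompose $\mathcal{O}_L^\times \cong \mu(L) \times (1+\ell\mathcal{O}_L)$ and treat the two factors separately. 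The principal units are isomorphic as a $\Gamma$-module to $\mathcal{O}_L\cong\mathbb{Z}_\ell[\Gamma]$ (via the $\ell$-adic logarithm and the normal basis theorem); since $\mathbb{Z}_\ell[\Gamma]$ is a one-dimensional Gorenstein ring, Matlis duality gives $|\operatorname{Ext}^1_{\mathbb{Z}[\Gamma]}(F,1+\ell\mathcal{O}_L)| = |F|^{(\ell)}$, the $\ell$-primary part of $|F|$. For the $\mu(L)$-contribution, the Grothendieck spectral sequence $H^p(\Gamma,\operatorname{Ext}^q_\mathbb{Z}(F,\mu(L))) \Rightarrow \operatorname{Ext}^{p+q}_{\mathbb{Z}[\Gamma]}(F,\mu(L))$, together with explicit computation of $\operatorname{Hom}_\mathbb{Z}(F,\mu(L))$ and $\operatorname{Ext}^1_\mathbb{Z}(F,\mu(L))$ from the primary decomposition of $F$ and with the Herbrand identity $h(\Gamma,M)=1$ valid for finite modules under cyclic $\Gamma$, yields a contribution dividing $|F|^{(\ell')}\cdot(|F|^{(\ell)})^{[L:\mathbb{Q}_\ell]}$. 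Multiplying the two pieces gives exactly the target bound $|F|\cdot|F|_\ell^{-[L:\mathbb{Q}_\ell]}$.

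The hard part will be this last estimate: the naive product of the graded pieces of the Grothendieck spectral sequence for $\operatorname{Ext}^1_{\mathbb{Z}[\Gamma]}(F,\mu(L))$ overshoots by a factor of $|F|^{(\ell)}$, so one must exploit either the non-trivial $d_2$-differential or (equivalently) the Matlis-duality identification above in order to extract the tight divisibility rather than a mere inequality. Throughout the argument the hypothesis that $L/\mathbb{Q}_\ell$ is unramified is indispensable, as it is precisely what turns $\mathcal{O}_L$ into a free $\mathbb{Z}_\ell[\Gamma]$-module via the normal basis theorem and thereby triggers all the cohomological vanishing statements used in the proof.
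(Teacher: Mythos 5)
Your high-level plan is sound and close in spirit to the paper's proof: both approaches apply $\operatorname{Hom}(-, \mathcal{O}_L^\times)$ to the character sequence, both rely crucially on the unramifiedness of $L/\mathbb{Q}_\ell$ (which forces $\Gamma$ to be cyclic and $\mathcal{O}_L^\times$ to be cohomologically trivial), and both reduce the problem to the torsion module $F$. Your identification of $\operatorname{coker}(\beta)$ with $\operatorname{Ext}^1_{\mathbb{Z}[\Gamma]}(\hat{T'}, \mathcal{O}_L^\times)$ is correct, as is the injection into $\operatorname{Ext}^1_{\mathbb{Z}[\Gamma]}(F, \mathcal{O}_L^\times)$. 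A side remark: your proof that $H^1(\Gamma, T(\mathcal{O}_L))$ vanishes, via the formal-group filtration and Lang's theorem, is far more elaborate than necessary; the paper gets this immediately from Nakayama's theorem (the paper's Theorem \ref{thm_NakayamaVanishing}), since $\hat{T}$ is $\mathbb{Z}$-free and $\mathcal{O}_L^\times$ is cohomologically trivial.

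However, there is a genuine gap in the final step, and you partly flag it yourself. The paper never needs to bound the full equivariant group $\operatorname{Ext}^1_{\mathbb{Z}[\Gamma]}(F, \mathcal{O}_L^\times)$: by running the Galois-cohomology long exact sequences explicitly with the intermediate module $I$, and invoking the 2-periodicity of Tate cohomology for cyclic $\Gamma$ together with the Herbrand identity, it obtains the \emph{exact} equality $\operatorname{coker}(\beta) = H^0\bigl(\Gamma, \operatorname{Ext}^1_\mathbb{Z}(F, \mathcal{O}_L^\times)\bigr)$, i.e.\ the $\Gamma$-\emph{invariants} of the plain $\operatorname{Ext}$. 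That group trivially divides $|\operatorname{Ext}^1_\mathbb{Z}(F, \mathcal{O}_L^\times)|$, which is what Lemma \ref{lemma_ExtBound} bounds. Your route instead requires bounding the larger group $\operatorname{Ext}^1_{\mathbb{Z}[\Gamma]}(F, \mathcal{O}_L^\times)$, whose Grothendieck spectral sequence has a genuine extra contribution from $E_\infty^{1,0} = H^1(\Gamma, \operatorname{Hom}_\mathbb{Z}(F, \mathcal{O}_L^\times))$, and this term does not get killed by any differential (the only incoming differential is from $E_2^{-1,1} = 0$). For the principal-unit piece this is harmless because $\operatorname{Hom}_\mathbb{Z}(F, 1+\ell\mathcal{O}_L) = 0$ (for $\ell$ odd) and Matlis duality does give exactly $|F^{(\ell)}|$. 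But for the $\mu(L)$-piece both $H^0(\Gamma, \operatorname{Ext}^1_\mathbb{Z}(F, \mu(L)))$ and $H^1(\Gamma, \operatorname{Hom}_\mathbb{Z}(F, \mu(L)))$ can be nonzero simultaneously, and the naive bound from the two graded pieces is $|F^{(\ell')}|^2$, which can exceed $|F|\cdot|F|_\ell^{-[L:\mathbb{Q}_\ell]}$. Reducing this to the claimed $|F^{(\ell')}|$ requires proving that the $d_2$-differential $H^0(\Gamma, \operatorname{Ext}^1_\mathbb{Z}(F,\mu(L))) \to H^2(\Gamma, \operatorname{Hom}_\mathbb{Z}(F,\mu(L)))$ has image of full size $|H^2(\Gamma,\operatorname{Hom})| = |H^1(\Gamma,\operatorname{Hom})|$; that is precisely the cancellation the paper performs by hand, and it is not supplied by your argument. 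Your stated bound for the $\mu(L)$-contribution, $|F^{(\ell')}|\cdot(|F^{(\ell)}|)^{[L:\mathbb{Q}_\ell]}$, also looks internally inconsistent: $\mu(L)$ is prime-to-$\ell$ torsion (for $\ell$ odd), so every $\operatorname{Ext}$ group involving it is as well, and the $(|F^{(\ell)}|)^{[L:\mathbb{Q}_\ell]}$ factor should not appear there. Finally, the decomposition $\mathcal{O}_L^\times \cong \mu(L)\times(1+\ell\mathcal{O}_L)$ is not a direct sum of $\Gamma$-modules for $\ell = 2$ (since $-1 \in 1+2\mathcal{O}_L$), and the $\ell$-adic logarithm does not identify $1+2\mathcal{O}_L$ with $\mathcal{O}_L$; this case needs separate treatment. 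In short: the reduction is right, the vanishing statements are right, but the crucial quantitative step is missing, and the paper's manual tracking of the low-degree long exact sequences (rather than packaging into equivariant $\operatorname{Ext}$) is exactly what buys the cancellation you still need to prove.
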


The proof is given below in §\ref{sect_ProofUnramifiedBound}, and relies mainly on the basic tools of Galois cohomology, together with the following classical theorem of Nakayama (cf. for example \cite[§2, Theorem 32]{MR0347778}):
\begin{theorem}\label{thm_NakayamaVanishing}
Let $A$ and $B$ be modules over the finite group $G$. Assume that $A$ is cohomologically trivial. In order for $\operatorname{Hom}(B,A)$ to be cohomologically trivial it is necessary and sufficient that $\operatorname{Ext}^1(B,A)$ be cohomologically trivial. In particular, if $B$ is $\mathbb{Z}$-free, then $\operatorname{Hom}(B,A)$ is cohomologically trivial.
\end{theorem}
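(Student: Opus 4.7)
The plan is to apply $\operatorname{Hom}(-, \mathcal O_L^\times)$ to the short exact sequence of character groups $0 \to \hat{T''} \to \hat T \to \hat{T'} \to 0$ and study the resulting four-term exact sequence of $\Gamma$-modules via Galois cohomology. Two ingredients do the heavy lifting: both $\hat T$ and $\hat{T''}$ are $\mathbb Z$-free, so their $\operatorname{Ext}^1$ terms vanish; and $L/\mathbb Q_\ell$ is unramified, which implies that $\mathcal O_L^\times$ is a cohomologically trivial $\Gamma$-module. Consequently one obtains
\[
0 \to T'(\mathcal O_L) \to T(\mathcal O_L) \to T''(\mathcal O_L) \to \operatorname{Ext}^1(F, \mathcal O_L^\times) \to 0,
\]
the right-hand term being $\operatorname{Ext}^1(F, \mathcal O_L^\times)$ rather than $\operatorname{Ext}^1(\hat{T'}, \mathcal O_L^\times)$ because the free quotient $\hat{T'}/F$ contributes nothing to $\operatorname{Ext}^1$.

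Splitting this sequence through the image $I$ of $T(\mathcal O_L) \to T''(\mathcal O_L)$ and taking $\Gamma$-cohomology of each three-term piece, one uses Theorem \ref{thm_NakayamaVanishing} to conclude that both $T(\mathcal O_L) = \operatorname{Hom}(\hat T, \mathcal O_L^\times)$ and $T''(\mathcal O_L) = \operatorname{Hom}(\hat{T''}, \mathcal O_L^\times)$ are cohomologically trivial. The two long exact sequences then collapse to give
\[
[I^\Gamma : \beta(T(\mathbb Z_\ell))] = |H^1(\Gamma, T'(\mathcal O_L))|,\qquad [T''(\mathbb Z_\ell):I^\Gamma] \leq |\operatorname{Ext}^1(F, \mathcal O_L^\times)^\Gamma|.
\]
A further application of Nakayama to the short exact sequence $0 \to F \to \hat{T'} \to \hat{T'}/F \to 0$ identifies $H^1(\Gamma, T'(\mathcal O_L))$ with $H^1(\Gamma, F^*(\mathcal O_L))$, where $F^*(\mathcal O_L) = \operatorname{Hom}(F, \mathcal O_L^\times)$, since the quotient $\hat{T'}/F$ is $\mathbb Z$-free and hence the corresponding $\operatorname{Hom}$-group is cohomologically trivial.

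Finally, I need to bound the product $|H^1(\Gamma, F^*(\mathcal O_L))| \cdot |\operatorname{Ext}^1(F, \mathcal O_L^\times)^\Gamma|$ by $|F|\cdot |F|_\ell^{-[L:\mathbb Q_\ell]}$. Applying $\operatorname{Hom}(-, \mathcal O_L^\times)$ to a finite free resolution $0 \to R \to P \to F \to 0$ produces another four-term exact sequence with cohomologically trivial middle terms, and a standard dimension-shift identifies $\hat H^i(\Gamma, F^*(\mathcal O_L)) \cong \hat H^{i-2}(\Gamma, \operatorname{Ext}^1(F, \mathcal O_L^\times))$. Because $\Gamma$ is cyclic (being the Galois group of an unramified local extension), Tate cohomology is $2$-periodic, so these groups coincide for every $i$. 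Combined with an explicit computation of $|\operatorname{Ext}^1(F, \mathcal O_L^\times)|$ from the decomposition $\mathcal O_L^\times \cong \mu_{\ell'}(L) \times U_L^{(1)}$ together with a decomposition $F \cong \prod_i \mathbb Z/n_i\mathbb Z$, this yields the claimed estimate.

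The main obstacle I anticipate is this last numerical step: neither factor in the product $|H^1(\Gamma, F^*(\mathcal O_L))| \cdot |\operatorname{Ext}^1(F, \mathcal O_L^\times)^\Gamma|$ is individually bounded by $|F|\cdot|F|_\ell^{-[L:\mathbb Q_\ell]}$, so the estimate has to emerge from the structural interaction between the two groups via Herbrand's theorem and Tate periodicity, not from a naive multiplication. A secondary subtlety arises for $\ell = 2$, where the $\mathbb Z_2$-module structure of $U_L^{(1)}$ is slightly more delicate, but the overall strategy is unaffected.
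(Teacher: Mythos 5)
Your proposal does not prove the statement it is supposed to prove. Theorem \ref{thm_NakayamaVanishing} is a purely algebraic fact about modules over an arbitrary finite group $G$: if $A$ is cohomologically trivial, then $\operatorname{Hom}(B,A)$ is cohomologically trivial if and only if $\operatorname{Ext}^1(B,A)$ is, and in particular $\operatorname{Hom}(B,A)$ is cohomologically trivial whenever $B$ is $\mathbb{Z}$-free. What you have written is instead a sketch of the proof of Proposition \ref{prop_UnramifiedBound} (the bound $|F|\cdot|F|_\ell^{-[L:\mathbb{Q}_\ell]}$ on the cokernel of $T(\mathbb{Z}_\ell)\to T''(\mathbb{Z}_\ell)$), which is essentially the argument of the paper's Section \ref{sect_ProofUnramifiedBound} --- and in the course of it you explicitly invoke Theorem \ref{thm_NakayamaVanishing} to conclude that $\operatorname{Hom}(\hat T,\mathcal{O}_L^\times)$, $\operatorname{Hom}(\hat{T''},\mathcal{O}_L^\times)$ and $\operatorname{Hom}(\hat{T'}/F,\mathcal{O}_L^\times)$ are cohomologically trivial. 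With respect to the assigned statement this is circular: the theorem to be proved is used as an ingredient, and nothing in your write-up establishes it. Moreover, the statement has no hypotheses about tori, character groups, or an unramified extension $L/\mathbb{Q}_\ell$; an argument whose inputs are those objects cannot be a proof of it.

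For the record, the paper does not prove this theorem either: it quotes it as a classical result of Nakayama with a reference. A genuine proof would have to be carried out in the homological algebra of $G$-modules, for instance as follows: choose a presentation $0 \to R \to P \to B \to 0$ with $P$ a free $\mathbb{Z}[G]$-module (so that $R$ is $\mathbb{Z}$-free), apply $\operatorname{Hom}(-,A)$, and use the standard characterization of cohomologically trivial modules (vanishing of two consecutive Tate cohomology groups for every subgroup of $G$, together with dimension shifting) to pass the property back and forth between $\operatorname{Hom}(B,A)$ and $\operatorname{Ext}^1(B,A)$; the case of $\mathbb{Z}$-free $B$ then follows because $\operatorname{Ext}^1_{\mathbb{Z}}(B,A)=0$, the zero module being trivially cohomologically trivial. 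None of these ideas appears in your proposal, so as an answer to the stated problem it has to be counted as missing the target entirely, even though the material it does contain lines up reasonably well with the paper's proof of Proposition \ref{prop_UnramifiedBound}.
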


\subsection{Preliminaries on $p$-adic fields}
The following two lemmas are certainly well-known, but for lack of an easily accessible reference we prefer to include a short proof.

\begin{lemma}
Let $L$ be a finite extension of $\mathbb{Q}_\ell$ with ring of integers $\mathcal{O}_L$, and let $n$ be a positive integer. The quotient $\mathcal{O}_L/\mathcal{O}_L^{\times n}$ has order dividing $n \cdot |n|_\ell^{-[L:\mathbb{Q}_\ell]}$.
\end{lemma}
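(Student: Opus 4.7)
The plan is to reduce the lemma to a straightforward computation using the standard $\mathbb{Z}_\ell$-module decomposition of the unit group of a local field. Specifically, I would rely on the well-known isomorphism
\[
\mathcal{O}_L^\times \;\cong\; \mu(L) \,\times\, \mathbb{Z}_\ell^d, \qquad d := [L:\mathbb{Q}_\ell],
\]
where $\mu(L)$ is the (finite, cyclic) group of roots of unity in $L$. This splitting is obtained by combining two standard facts: the $\ell$-adic logarithm identifies the principal units $U^{(N)}$ (for $N$ sufficiently large) with a free $\mathbb{Z}_\ell$-module of rank $d$; and the torsion subgroup of a finitely generated $\mathbb{Z}_\ell$-module always splits off, because the quotient is torsion-free and $\mathbb{Z}_\ell$ is a PID.

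With this in hand the proof is essentially a one-line calculation. Raising both factors to the $n$-th power and quotienting gives
\[
\mathcal{O}_L^\times / (\mathcal{O}_L^\times)^n \;\cong\; \mu(L)/\mu(L)^n \;\oplus\; (\mathbb{Z}_\ell/n\mathbb{Z}_\ell)^d.
\]
The second summand has order $\ell^{d\,v_\ell(n)} = |n|_\ell^{-d}$. Since $\mu(L)$ is cyclic of some order $m$, the first summand has order $\gcd(m,n)$, which in particular divides $n$. Multiplying these two contributions shows that $|\mathcal{O}_L^\times/(\mathcal{O}_L^\times)^n|$ divides $n \cdot |n|_\ell^{-d}$, which is exactly the claim (once one reads $\mathcal{O}_L^\times$ in place of the apparent typographical $\mathcal{O}_L$ in the numerator).

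I do not anticipate any genuine obstacle: the only non-trivial ingredient is the structure theorem for $\mathcal{O}_L^\times$, and this is completely standard. If one wished a self-contained proof one would have to invest a few lines in verifying that the logarithm on $U^{(N)}$ is a $\mathbb{Z}_\ell$-module isomorphism onto a finite-index submodule of $\mathcal{O}_L$, but this is classical (see e.g.\ Neukirch, \emph{Algebraic Number Theory}, Ch.~II). Alternatively, one could circumvent any explicit splitting by applying the snake lemma to multiplication by $n$ on the short exact sequence $1 \to \mu(L) \to \mathcal{O}_L^\times \to \mathcal{O}_L^\times/\mu(L) \to 1$ and bounding the kernel and cokernel of each piece separately, arriving at the same numerical estimate.
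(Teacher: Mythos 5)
Your proof is correct and gives the right divisibility (indeed, the exact order $|\mu(L)[n]| \cdot |n|_\ell^{-d}$, same as what the paper's proof produces), and you are right that the statement contains a typo ($\mathcal{O}_L$ should read $\mathcal{O}_L^\times$). However, the route you take is not the paper's. You invoke the full structure theorem $\mathcal{O}_L^\times \cong \mu(L) \times \mathbb{Z}_\ell^{d}$ (Teichm\"uller lift plus the $\ell$-adic logarithm plus a splitting argument over the PID $\mathbb{Z}_\ell$), after which the bound is a one-line computation. The paper instead treats all groups as $\mathbb{Z}/n\mathbb{Z}$-modules with trivial action and works with the Herbrand quotient $h_n(M) = |\hat H^0|/|H^1|$. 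Its two inputs are (i) invariance of $h_n$ under passage to finite-index subgroups, so that $h_n(\mathcal{O}_L^\times) = h_n(\mathcal{O}_L^1)$, and (ii) the fact (Serre, \emph{Corps Locaux}, Ch.~XIV) that $\mathcal{O}_L^1$ contains a finite-index subgroup isomorphic to the additive group $\mathcal{O}_L$, giving $h_n(\mathcal{O}_L^\times) = h_n(\mathcal{O}_L) = |n|_\ell^{-d}$; the residual factor $|H^1| = |\mathcal{O}_L^\times[n]|$ divides $n$. The cohomological bookkeeping lets the paper avoid exhibiting an explicit direct-sum decomposition of $\mathcal{O}_L^\times$: it only needs the weaker fact that the principal units contain a finite-index copy of $\mathcal{O}_L$, not that the torsion splits off. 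Your version is more concrete and self-explanatory; the paper's is marginally leaner in its structural prerequisites and fits the cohomological tenor of the surrounding section (the same Herbrand-quotient manipulations reappear in the proof of Proposition~\ref{prop_UnramifiedBound}). Both are complete proofs.
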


\begin{proof}
We regard all the involved groups as $\mathbb{Z}/n\mathbb{Z}$-modules with trivial action, and denote by $h_n$ the associated Herbrand quotient, that is to say for every finite $\mathbb{Z}/n\mathbb{Z}$-module $M$ we set
\[
\displaystyle h_n(M) := \frac{|\hat{H}^0(\mathbb{Z}/n\mathbb{Z},M)|}{|H^1(\mathbb{Z}/n\mathbb{Z},M)|}.
\]
As $\mathcal{O}_L^1$, the subgroup of principal units of $\mathcal{O}_L$, has finite index in $\mathcal{O}_L^\times$ (and the Herbrand quotient is invariant by passage to finite-index subgroups), we have $h_n(\mathcal{O}_L^\times)=h_n(\mathcal{O}_L^1)$.

On the other hand, $\mathcal{O}_L^1$ contains a subgroup of finite index that is isomorphic to $\mathcal{O}_L$ (\cite[Chapitre XIV, prop. 10]{MR0354618}), so $h_n(\mathcal{O}_L^\times)=h_n(\mathcal{O}_L^1)=h_n(\mathcal{O}_L)$. Furthermore, $H^1(\mathbb{Z}/n\mathbb{Z},\mathcal{O}_L^\times)=\operatorname{Hom}\left(\mathbb{Z}/n\mathbb{Z},\mathcal{O}_L^\times \right)=\mathcal{O}_L^\times[n]$ has order dividing $n$, while $H^1(\mathbb{Z}/n\mathbb{Z},\mathcal{O}_L)=\mathcal{O}_L[n]=0$. The lemma then follows easily because the quantity $\left|\frac{\mathcal{O}_L^\times}{\mathcal{O}_L^{\times n}}\right|=h^1(\mathbb{Z}/n\mathbb{Z},\mathcal{O}_L^\times) \cdot h_n\left( \mathcal{O}_L^\times \right)$ divides
\[
n \cdot h_n(\mathcal{O}_L) = n \frac{\left|\mathcal{O}_L/n\mathcal{O}_L \right|}{h^1(\mathbb{Z}/n\mathbb{Z},\mathcal{O}_L)}=n \cdot |n|_\ell^{-[L:\mathbb{Q}_\ell]}.
\]
\end{proof}

\begin{lemma}\label{lemma_ExtBound}
Let $F$ be a finite abelian group and $L$ be a finite extension of $\mathbb{Q}_\ell$. Then $|\operatorname{Ext}^1(F,\mathcal{O}_L^\times)|$ divides $|F| \cdot |F|_\ell^{-[L:\mathbb{Q}_\ell]}.$
\end{lemma}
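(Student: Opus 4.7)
The plan is to reduce the statement to the previous lemma by decomposing $F$ into cyclic pieces and exploiting the additivity of $\operatorname{Ext}^1$. Concretely, I would write $F \cong \bigoplus_{i=1}^{k} \mathbb{Z}/n_i\mathbb{Z}$ for suitable positive integers $n_i$. Since $\operatorname{Ext}^1(-,\mathcal{O}_L^\times)$ turns finite direct sums in the first variable into finite direct sums, we get
\[
\operatorname{Ext}^1(F,\mathcal{O}_L^\times) \;\cong\; \bigoplus_{i=1}^{k} \operatorname{Ext}^1(\mathbb{Z}/n_i\mathbb{Z},\mathcal{O}_L^\times).
\]

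The next step is a standard computation: applying $\operatorname{Hom}(-,\mathcal{O}_L^\times)$ to the free resolution $0 \to \mathbb{Z} \xrightarrow{n} \mathbb{Z} \to \mathbb{Z}/n\mathbb{Z} \to 0$ yields the long exact sequence
\[
0 \to \operatorname{Hom}(\mathbb{Z}/n\mathbb{Z},\mathcal{O}_L^\times) \to \mathcal{O}_L^\times \xrightarrow{x\mapsto x^n} \mathcal{O}_L^\times \to \operatorname{Ext}^1(\mathbb{Z}/n\mathbb{Z},\mathcal{O}_L^\times) \to 0,
\]
which identifies $\operatorname{Ext}^1(\mathbb{Z}/n\mathbb{Z},\mathcal{O}_L^\times)$ with the quotient $\mathcal{O}_L^\times/\mathcal{O}_L^{\times n}$. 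By the preceding lemma, the order of this quotient divides $n \cdot |n|_\ell^{-[L:\mathbb{Q}_\ell]}$.

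Putting the pieces together, the order of $\operatorname{Ext}^1(F,\mathcal{O}_L^\times)$ divides
\[
\prod_{i=1}^{k} n_i \cdot |n_i|_\ell^{-[L:\mathbb{Q}_\ell]} \;=\; \Bigl(\prod_{i=1}^{k} n_i\Bigr) \cdot \Bigl(\prod_{i=1}^{k}|n_i|_\ell\Bigr)^{-[L:\mathbb{Q}_\ell]} \;=\; |F| \cdot |F|_\ell^{-[L:\mathbb{Q}_\ell]},
\]
where in the last equality we used both $|F| = \prod_i n_i$ and the multiplicativity of the $\ell$-adic absolute value applied to the same product. There is no real obstacle here: the whole argument is a direct assembly of the previous lemma with the structure theorem for finite abelian groups and the standard $\operatorname{Ext}^1$ computation for cyclic groups.
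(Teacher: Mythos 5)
Your proof is correct and follows essentially the same route as the paper: decompose $F$ into cyclic factors, use additivity of $\operatorname{Ext}^1$ and the identification $\operatorname{Ext}^1(\mathbb{Z}/n\mathbb{Z},\mathcal{O}_L^\times)\cong\mathcal{O}_L^\times/\mathcal{O}_L^{\times n}$, then invoke the preceding lemma and multiply. The only difference is that you spell out the standard $\operatorname{Ext}^1$ computation and the final multiplicativity bookkeeping, which the paper leaves implicit.
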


\begin{proof}
Writing $F$ as $\displaystyle \bigoplus_{i} \frac{\mathbb{Z}}{d_i \mathbb{Z}}$ we have \[\operatorname{Ext}^1\left(F,\mathcal{O}_L^\times\right) \cong \prod_i \operatorname{Ext}^1\left(\frac{\mathbb{Z}}{d_i \mathbb{Z}}, \mathcal{O}_L^\times \right) \cong \prod_i \frac{\mathcal{O}_L^\times}{\mathcal{O}_L^{\times d_i}}.\] The result follows from the previous lemma.
\end{proof}

\subsection{Proof of proposition \ref{prop_UnramifiedBound}}\label{sect_ProofUnramifiedBound}
Note that -- since $L/\mathbb{Q}_\ell$ is unramified -- the group $\mathcal{O}_L^\times$ is a cohomologically trivial $\Gamma$-module (cf. for example \cite[Prop. 7.1.2 (i)]{neukirch2013cohomology}). As $\hat{T}$ and $\hat{T''}$ are free abelian groups, Nakayama's theorem implies in particular that $\operatorname{Hom}\left( \hat{T}, \mathcal{O}_L^\times \right)$ and $\operatorname{Hom}\left( \hat{T''}, \mathcal{O}_L^\times \right)$ are cohomologically trivial $\Gamma$-modules. We will make extensive use of this fact. The character groups of $T,T',T''$ fit into an exact sequence
\[
0 \to \hat{T''} \to \hat{T} \to \hat{T'} \to 0;
\]
applying the functor $\operatorname{Hom}\left(-, \mathcal{O}_L^\times \right)$ gives another exact sequence
\[
\begin{aligned}
0 \to \operatorname{Hom}\left(\hat{T'},\mathcal{O}_L^\times\right) & \to \operatorname{Hom}\left(\hat{T},\mathcal{O}_L^\times\right) \\ &\to \operatorname{Hom}\left(\hat{T''},\mathcal{O}_L^\times\right)  \to \operatorname{Ext}^1\left(\hat{T'}, \mathcal{O}_L^\times \right) \to 0,
\end{aligned}
\]
where the following $\operatorname{Ext}$ term vanishes since $\hat{T}$ is free. If we let
\[
I:=\operatorname{Image}\left( \operatorname{Hom}\left(\hat{T},\mathcal{O}_L^\times\right) \to \operatorname{Hom}\left(\hat{T''},\mathcal{O}_L^\times\right) \right),\]
the previous sequence gives rise to the two exact sequences
\begin{equation}\label{eq_seq1}
0 \to \operatorname{Hom}\left(\hat{T'},\mathcal{O}_L^\times\right) \to \operatorname{Hom}\left(\hat{T},\mathcal{O}_L^\times\right) \to I \to 0
\end{equation}
and
\begin{equation}\label{eq_seq2}
0 \to I \to \operatorname{Hom}\left(\hat{T''},\mathcal{O}_L^\times\right) \to \operatorname{Ext}^1\left(\hat{T'}, \mathcal{O}_L^\times\right) \to 0.
\end{equation}

The long exact sequences in Galois cohomology associated with \eqref{eq_seq1} and \eqref{eq_seq2} give
\begin{equation}\label{eq_seq3}
0 \to \operatorname{Hom}_\Gamma\left(\hat{T'},\mathcal{O}_L^\times\right) \to T(\mathbb{Z}_\ell)  \to H^0(\Gamma,I) \to H^1\left(\Gamma, \operatorname{Hom}\left(\hat{T'} ,\mathcal{O}_L^\times\right)\right) \to 0,
\end{equation}
\begin{equation}\label{eq_seq4}
0 \to H^1(\Gamma,I) \to H^2\left( \Gamma, \operatorname{Hom} \left( \hat{T'}, \mathcal{O}_L^\times \right) \right) \to 0,
\end{equation}
and
\begin{equation}\label{eq_seq5}
0 \to H^0(\Gamma,I) \to T''(\mathbb{Z}_\ell) \to H^0\left(\Gamma,\operatorname{Ext}^1\left(\hat{T'}, \mathcal{O}_L^\times\right)\right) \to H^1(\Gamma,I) \to 0,
\end{equation}
where we have used the fact that $\operatorname{Hom}\left( \hat{T}, \mathcal{O}_L^\times \right)$ and $\operatorname{Hom}\left( \hat{T''}, \mathcal{O}_L^\times \right)$ are cohomologically trivial. Also notice that we have an exact sequence of $\Gamma$-modules
\begin{equation}\label{eq_CharacterModule}
0 \to F \to \hat{T'} \to \hat{T'}/F \to 0
\end{equation}
where $\hat{T'}/F \cong \mathbb{Z}^a$ is free. 
We can then apply $\operatorname{Hom}\left(-, \mathcal{O}_L^\times \right)$ to \eqref{eq_CharacterModule} to get
\[
0 \to \operatorname{Hom}\left(\hat{T'}/F, \mathcal{O}_L^\times \right) \to \operatorname{Hom}\left(\hat{T'},\mathcal{O}_L^\times\right) \to \operatorname{Hom}\left(F,\mathcal{O}_L^\times\right) \to 0,
\]
and since $\operatorname{Hom}\left(\hat{T'}/F, \mathcal{O}_L^\times \right)$ is again cohomologically trivial by theorem \ref{thm_NakayamaVanishing} we deduce that for every $n \geq 1$ we have canonical isomorphisms
\begin{equation}\label{eq_CohomologyIsomorphism}
H^n\left(\Gamma, \operatorname{Hom}\left(\hat{T'},\mathcal{O}_L^\times \right) \right) \stackrel{\sim}{\longrightarrow} H^n\left(\Gamma, \operatorname{Hom}\left(F,\mathcal{O}_L^\times \right) \right).
\end{equation}

Straightforward manipulations of sequences \eqref{eq_seq3} and \eqref{eq_seq5} show that
\[
\left| \operatorname{coker} \left( T(\mathbb{Z}_\ell) \to T''(\mathbb{Z}_\ell) \right) \right| = \frac{h^0\left( \Gamma,\operatorname{Ext}^1\left( \hat{T'}, \mathcal{O}_L^\times \right) \right) \cdot h^1\left(\Gamma, \operatorname{Hom}\left(\hat{T'},\mathcal{O}_L^\times\right) \right)}{h^1(\Gamma,I)}.
\]
For the sake of notational simplicity set $M=\operatorname{Hom}\left(F,\mathcal{O}_L^\times\right)$. Using \eqref{eq_seq4} and \eqref{eq_CohomologyIsomorphism} we arrive at
\begin{equation}\label{eq_AlmostFinal}
\left| \operatorname{coker} \left( T(\mathbb{Z}_\ell) \to T''(\mathbb{Z}_\ell) \right) \right|  = \frac{h^0\left( \Gamma,\operatorname{Ext}^1\left( \hat{T'}, \mathcal{O}_L^\times \right) \right) \cdot h^1\left(\Gamma, M \right)}{h^2(\Gamma,M)}.
\end{equation}

Observe now that the group $\Gamma$ is cyclic (since it is the Galois group of an unramified extension) and the module $M$ is finite: as it is well-known, the Tate cohomology $\hat{H}^n$ of a cyclic group with values in a finite module is $2$-periodic in $n$. Moreover, the Herbrand quotient $\frac{\left|\hat{H^0}(\Gamma,M)\right|}{\left|\hat{H^1}(\Gamma,M)\right|}$ equals 1 since $M$ is finite, and therefore $h^2(\Gamma,M)=\left| \hat{H}^0(\Gamma,M) \right|=h^1(\Gamma,M)$ (for all these facts cf. for example \cite[§I.7]{neukirch2013cohomology}). Using this equality in \eqref{eq_AlmostFinal} we finally find $\left| \operatorname{coker} \left( T(\mathbb{Z}_\ell) \to T''(\mathbb{Z}_\ell) \right) \right|=h^0\left( \Gamma,\operatorname{Ext}^1\left( \hat{T'}, \mathcal{O}_L^\times \right) \right)$.
Proposition \ref{prop_UnramifiedBound} then follows from the fact that $h^0\left( \Gamma,\operatorname{Ext}^1\left( \hat{T'}, \mathcal{O}_L^\times \right) \right)$ divides $\left| \operatorname{Ext}^1\left( \hat{T'}, \mathcal{O}_L^\times \right) \right| = \left| \operatorname{Ext}^1\left( \mathbb{Z}^a \oplus F, \mathcal{O}_L^\times \right) \right| = \left| \operatorname{Ext}^1\left(F, \mathcal{O}_L^\times \right) \right|$ and from lemma \ref{lemma_ExtBound}.

\section{The cokernel of an isogeny, without the good reduction assumption}\label{sect_Geometry}
Let $T, T'$ be $\mathbb{Q}_\ell$-tori and $\lambda: T \to T'$ be a $\mathbb{Q}_\ell$-isogeny. We do not assume that $T$ or $T'$ has good reduction, and for the purposes of this section we define the $\mathbb{Z}_\ell$-points of a $\mathbb{Q}_\ell$-torus to be the maximal compact subgroup of $T(\mathbb{Q}_\ell)$ (cf. definition \ref{def_ZlPoints}). Our aim is again to bound the order of $\operatorname{coker} \left( T(\mathbb{Z}_\ell) \stackrel{\lambda}{\longrightarrow} T'(\mathbb{Z}_\ell) \right)$, in terms of the degree $m$ of $\lambda$ and of $\dim T=\dim T'=:d$. Cohomological tools could again be used to investigate the problem, but we find that an entirely different approach (through $p$-adic differential geometry) yields simpler and more effective proofs; the method is inspired by \cite{MR2947946}, see especially lemma 4.4 in \textit{op.~cit}.

\begin{proposition}\label{prop_UnconditionalBound}
Let $T,T'$ be $\mathbb{Q}_\ell$-tori of dimension $d$ and $\lambda:T \to T'$ be an isogeny of degree $m$. The order of $\operatorname{coker} \left( T(\mathbb{Z}_\ell) \stackrel{\lambda}{\longrightarrow} T'(\mathbb{Z}_\ell) \right)$ is at most $m^d \cdot |m|_\ell^{-d}$.
\end{proposition}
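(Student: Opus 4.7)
The plan is to bound the cokernel of $\lambda$ by first reducing to the cokernel of the multiplication-by-$m$ map on $T'(\mathbb{Z}_\ell)$, and then to estimate this latter quantity through the $p$-adic logarithm.

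To make the reduction, I first construct a dual isogeny. Since $\ker\lambda$ is a finite $\mathbb{Q}_\ell$-group scheme of order $m$, it is annihilated by $m$ and is therefore contained in $\ker([m]_T) = T[m]$; the universal property of quotients then produces a $\mathbb{Q}_\ell$-isogeny $\lambda^{\vee}:T'\to T$ with $\lambda^{\vee}\circ\lambda = [m]_T$, and post-composing with $\lambda$ (which is an epimorphism) yields $\lambda\circ\lambda^{\vee} = [m]_{T'}$. Since $\lambda^{\vee}$ is continuous, $\lambda^{\vee}(T'(\mathbb{Z}_\ell))$ is a compact subgroup of $T(\mathbb{Q}_\ell)$, hence sits inside the unique maximal compact subgroup $T(\mathbb{Z}_\ell)$. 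It follows that $[m]T'(\mathbb{Z}_\ell) = \lambda(\lambda^{\vee}(T'(\mathbb{Z}_\ell)))\subseteq\lambda(T(\mathbb{Z}_\ell))$, so the natural surjection
\[
T'(\mathbb{Z}_\ell)/[m]T'(\mathbb{Z}_\ell) \twoheadrightarrow T'(\mathbb{Z}_\ell)/\lambda(T(\mathbb{Z}_\ell))
\]
reduces the task to bounding the order of $T'(\mathbb{Z}_\ell)/[m]T'(\mathbb{Z}_\ell)$.

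For the second step I exploit that $T'(\mathbb{Z}_\ell)$ is a compact $\ell$-adic analytic group of dimension $d$; by standard $p$-adic Lie theory it contains an open subgroup $V$ (for instance a sufficiently deep uniform pro-$\ell$ one) on which the logarithm yields a topological group isomorphism $V\cong\mathbb{Z}_\ell^d$, so in particular $V$ is torsion-free. Setting $F = T'(\mathbb{Z}_\ell)/V$, which is finite abelian, the snake lemma applied to the diagram
\[
\begin{array}{ccccccccc}
0 & \to & V & \to & T'(\mathbb{Z}_\ell) & \to & F & \to & 0 \\
& & \downarrow\scriptstyle [m] & & \downarrow\scriptstyle [m] & & \downarrow\scriptstyle [m] & & \\
0 & \to & V & \to & T'(\mathbb{Z}_\ell) & \to & F & \to & 0
\end{array}
\]
combined with $V[m]=0$ and the equality $|F[m]|=|F/mF|$ (valid for any finite abelian $F$) yields
\[
\bigl|T'(\mathbb{Z}_\ell)/[m]T'(\mathbb{Z}_\ell)\bigr| = |V/mV|\cdot\bigl|T'(\mathbb{Z}_\ell)[m]\bigr|.
\]

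Finally, the log identification turns $[m]:V\to V$ into multiplication by $m$ on $\mathbb{Z}_\ell^d$, so $|V/mV|=|\mathbb{Z}_\ell/m\mathbb{Z}_\ell|^d=|m|_\ell^{-d}$. Moreover $T'[m]$ is finite étale of order $m^d$ over $\mathbb{Q}_\ell$, whence $|T'(\mathbb{Z}_\ell)[m]|\leq|T'(\overline{\mathbb{Q}_\ell})[m]|=m^d$. Multiplying the two bounds produces the claimed inequality $m^d\cdot|m|_\ell^{-d}$. The only step requiring some care is the invocation of the structural result on compact $\ell$-adic Lie groups used to produce $V$; this is the ``$p$-adic differential geometry'' ingredient alluded to in the introduction to the section, but it is standard and needs no further development here.
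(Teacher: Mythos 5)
Your reduction to the cokernel of $[m]$ via the dual isogeny $\lambda^\vee$ is the same first move the paper makes, and it is correct: $\ker\lambda$ has order $m$ hence is killed by $m$, so $\ker\lambda\subseteq T[m]$ and $\lambda^\vee$ exists, and $\lambda^\vee(T'(\mathbb{Z}_\ell))$ lands in the maximal compact $T(\mathbb{Z}_\ell)$. After that the two proofs diverge. The paper estimates $|\operatorname{coker}[m]|$ by a $p$-adic integration argument: it fixes the Haar measure with $\mu(T'(\mathbb{Z}_\ell))=1$, factors $[m]$ through the covering $T'(\mathbb{Z}_\ell)\to S:=T'(\mathbb{Z}_\ell)/\ker[m]$ and an analytic embedding $q:S\to T'(\mathbb{Z}_\ell)$, and computes the Jacobian of $q$ in exponential coordinates to get $q^*\mu = |m|_\ell^d\,\mu_S$; comparing volumes yields $|\operatorname{coker}[m]| = |\ker[m]|\cdot|m|_\ell^{-d}$. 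You instead take a torsion-free open subgroup $V\cong\mathbb{Z}_\ell^d$ (which exists by the standard structure theory of compact abelian $\ell$-adic Lie groups, or concretely by the logarithm on a small enough congruence subgroup), run the snake lemma on $0\to V\to T'(\mathbb{Z}_\ell)\to F\to 0$ with $[m]$ acting vertically, and use $V[m]=0$ together with $|F[m]|=|F/mF|$ to obtain the same identity $|\operatorname{coker}[m]| = |T'(\mathbb{Z}_\ell)[m]|\cdot|V/mV| = |T'(\mathbb{Z}_\ell)[m]|\cdot|m|_\ell^{-d}$, then bound $|T'(\mathbb{Z}_\ell)[m]|\leq m^d$ exactly as the paper does. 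Both arguments hinge on the same underlying fact --- an open neighbourhood of the identity in $T'(\mathbb{Z}_\ell)$ is, via $\exp$/$\log$, isomorphic to $\mathbb{Z}_\ell^d$ --- but your version extracts the conclusion by pure homological algebra and avoids introducing Haar measure and the change-of-variables formula, which makes it somewhat more elementary; the paper's measure-theoretic formulation is the one it advertises as ``$p$-adic differential geometry'' and is arguably more readily generalized. Both yield the same exact equality before the final inequality, so neither is sharper.
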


\begin{proof}
Notice first that $\lambda$ fits into a commutative diagram
\smallskip

\begin{center}
\makebox{\xymatrix{
T'(\mathbb{Z}_\ell) \ar@{-->}[rr]^{\lambda^\vee} \ar[dr]_{[m]} & & T(\mathbb{Z}_\ell) \ar[dl]^{\lambda} \\ & T'(\mathbb{Z}_\ell)  }} 
\end{center}
and therefore it is enough to bound the cokernel of $[m]:T'(\mathbb{Z}_\ell) \to T'(\mathbb{Z}_\ell)$. Fix now a Haar measure $\mu$ on $T'(\mathbb{Q}_\ell)$, normalized in such a way that $\mu(T'(\mathbb{Z}_\ell))=1$. 

Consider the kernel $K$ of $[m]$ (as a subgroup of $T'(\mathbb{Z}_\ell)$, not as a group scheme) and the quotient $S=T'(\mathbb{Z}_\ell)/K$, and note that $\pi : T'(\mathbb{Z}_\ell) \to S$ is a covering map. We denote by $\mu_S$ the measure on $S$ given by $\mu_S(A)=\frac{1}{|K|} \mu \left(\pi^{-1}(A)\right)$: it can also be interpreted as the measure induced on $S$ by the (Haar) volume form of $T'(\mathbb{Z}_\ell)$, which passes to the quotient since it is translation-invariant.
The volume of $S$ (for the measure $\mu_S$) is $\displaystyle \frac{\operatorname{vol} (T'(\mathbb{Z}_\ell))}{|K|}=\frac{1}{|K|}$, and we have an $\ell$-adic analytic map $q:S \to T'(\mathbb{Z}_\ell)$ such that the following diagram commutes:
\begin{center}
\makebox{\xymatrix{
T'(\mathbb{Z}_\ell) \ar[rr]^{[m]} \ar[dr]_{\pi} & & T'(\mathbb{Z}_\ell)  \\ & S\ar@{-->}[ur]_{q}  }}
\end{center}
Clearly $q$ is an $\ell$-adic analytic embedding and we have $\operatorname{Image} q =\operatorname{Image} [m]=:I$. We have the following immediate equality:
\begin{equation}\label{eq_Volumes1}
\operatorname{vol}(I) = \frac{1}{|T'(\mathbb{Z}_\ell)/I|} \operatorname{vol} \left( T'(\mathbb{Z}_\ell) \right)=\frac{1}{|T'(\mathbb{Z}_\ell)/I|}.
\end{equation}
On the other hand, a simple computation in coordinates shows $q^* \mu = |m|_\ell^d \, \mu_S$: we can parametrize a neighbourhood of $g \in T'(\mathbb{Z}_\ell)$ by $x \mapsto g \exp(x)$ (for $x$ varying in some small neighbourhood of 0 in the Lie algebra of $T'(\mathbb{Q}_\ell)$), and composing with $\pi$ this also induces a parametrization of a neighbourhood of $\pi(g) \in S$. In these coordinates the map $q$ is simply multiplication by $m$, so its Jacobian determinant is $|m|_\ell^d$ and the change of variables formula for $\ell$-adic integration gives the required result. This yields
\[
\operatorname{vol}(I)  =\int_{I} d \mu = \int_{q(S)} d \mu = \int_{S} d(q^*  \mu) = \int_S |m|_\ell^d \; d \mu_S =  |m|_\ell^d \; \frac{1}{|K|},
\]
and comparing this equality with equation \eqref{eq_Volumes1} gives
\[
\left|\operatorname{coker} \left(T'(\mathbb{Z}_\ell) \stackrel{[m]}{\longrightarrow} T'(\mathbb{Z}_\ell) \right) \right|=|T'(\mathbb{Z}_\ell)/I|=\frac{1}{\operatorname{vol}(I)} = \frac{|K|}{|m|_\ell^d}.
\]
Finally, it is clear that $|K| \leq \left|T'(\overline{\mathbb{Q}_\ell})[m]\right|=m^d$, and this finishes the proof.
\end{proof}

\section{Description of the Galois representation}\label{sect_FinalProofs}
Let $A/K$ be an absolutely simple $g$-dimensional CM abelian variety admitting complex multiplication (over $K$) by the CM type $(E,S)$.
Let $\tilde{E}$ be the Galois closure of $E$, denote by $(E^*,R)$ the reflex type of $(E,S)$, and let $\ell$ be a prime number. 
It is known that -- since the action of $E$ is defined over $K$ -- the reflex field $E^*$ is contained in $K$ (\cite[Chap. 3, Theorem 1.1]{MR713612}), and by \cite[Corollary 2 to Theorem 5]{MR0236190}, the $\ell$-adic Galois representation attached to $A$ can be viewed as a map
\[
\rho_{\ell^\infty} : \abGal{K} \to \left(\operatorname{End}_K(A) \otimes \mathbb{Z}_\ell \right)^\times \hookrightarrow \left(\mathcal{O}_E \otimes \mathbb{Z}_\ell\right)^\times.
\]
We denote by $G_{\ell^\infty}$ the image of $\rho_{\ell^\infty}$. We now recall the description of $\rho_{\ell^\infty}$ coming from the fundamental theorem of complex multiplication, and refer the reader to \cite[§4]{MR608640} and \cite{MR0236190} for further details. Let $I_K$ be the group of idèles of $K$. As $\left(\mathcal{O}_E \otimes \mathbb{Z}_\ell\right)^\times$ is commutative, there is a factorization
\begin{center}
\makebox{\xymatrix{
I_K \ar[r] & \abGal{K}^{ab} \ar@{-->}[r] & \left(\mathcal{O}_E \otimes \mathbb{Z}_\ell\right)^\times  \\ & \abGal{K}\ar[ur]_{\rho_{\ell^\infty}} \ar[u]  }}
\end{center}
which (by class field theory) allows us to regard $\rho_{\ell^\infty}$ as a map from $I_K$ to $\left(\mathcal{O}_E \otimes \mathbb{Z}_\ell \right)^\times$. Let us introduce some notation: we write $\mu(E)$ for the group of roots of unity in $E$, and if $v$ is a place of $K$ we write $\mathcal{O}_{K,v}$ for the completion at $v$ of the ring of integers of $K$. If $v$ is furthermore finite we denote by $p_v$ its residual characteristic; we also let $\Omega_K$ be the set of all finite places of $K$. If $F$ is a number field we denote by $F_\ell$ the algebra $F \otimes \mathbb{Q}_\ell$, and for an idèle $a \in I_K$ we write $a_\ell$ for the component of $a$ in $K_\ell\cong \displaystyle \prod_{p_v=\ell} K_v^\times$.
With this notation, the map $\rho_{\ell^\infty}$ is described very precisely by the following theorem:
\begin{theorem}{(\cite[Theorems 6, 10 and 11]{MR0236190})}\label{thm_FundamentalTheoremCM}
There exists a unique continuous homomorphism $\varepsilon : I_K \to E^\times$ such that, for all finite places $v$ of $K$, the group $\varepsilon \left( \mathcal{O}_{K,v}^\times \right)$ is contained in $\mu(E)$, and
\[
\rho_{\ell^\infty}(a) = \varepsilon(a) \Phi_{(E,S)}\left(N_{K_\ell/E_\ell^*} (a_\ell) \right)^{-1}
\]
for all $a \in I_K$. If furthermore $v \in \Omega_K$ is a place of good reduction for $A$, then $\varepsilon\left( \mathcal{O}_{K,v}^\times \right)$ is trivial.
\end{theorem}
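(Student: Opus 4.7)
The classical strategy, going back to Shimura-Taniyama and systematised by Serre-Tate, is to identify $\rho_{\ell^\infty}$ explicitly on Frobenius elements at places of good reduction prime to $\ell$, and then extend to all of $I_K$ by continuity. Since $(\mathcal{O}_E \otimes \mathbb{Z}_\ell)^\times$ is abelian, $\rho_{\ell^\infty}$ factors through the maximal abelian quotient of $\abGal{K}$, and global class field theory allows us to view it as a continuous homomorphism $\rho_{\ell^\infty} : I_K \to (\mathcal{O}_E \otimes \mathbb{Z}_\ell)^\times$. For each finite place $v \nmid \ell$ at which $A$ has good reduction, the N\'eron-Ogg-Shafarevich criterion ensures that this map is unramified at $v$ and that a geometric Frobenius acts as the Frobenius endomorphism $\pi_v \in \mathcal{O}_E$ of the reduction $\tilde{A}_v$. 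The main technical input is the Shimura-Taniyama congruence relation, which identifies $\pi_v$ explicitly as (a unit times) $\Phi_{(E,S)}\bigl(N_{K/E^*}(\mathfrak{p}_v)\bigr)$, the reflex norm of the prime ideal at $v$; this is proved by reducing the complex-analytic uniformisation $A(\mathbb{C}) \cong \mathbb{C}^g / \Phi_S(\mathfrak{a})$ modulo $v$ and matching the resulting isogeny with the Frobenius action.

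Next I would define $\varepsilon : I_K \to E_\ell^\times$ by
\[
\varepsilon(a) := \rho_{\ell^\infty}(a) \cdot \Phi_{(E,S)}\bigl(N_{K_\ell/E_\ell^*}(a_\ell)\bigr)
\]
and show that $\varepsilon$ takes values in $E^\times$, with image in $\mu(E)$ on each compact subgroup $\mathcal{O}_{K,v}^\times$. By the Frobenius formula, $\varepsilon$ is trivial on uniformisers at places of good reduction prime to $\ell$, whose images generate $I_K$ modulo $\prod_v \mathcal{O}_{K,v}^\times$; hence $\varepsilon$ is determined by its restrictions to the local unit groups. For $p_v \neq \ell$ the image $\rho_{\ell^\infty}(\mathcal{O}_{K,v}^\times)$ is simultaneously pro-$p_v$ (coming from the decomposition group at $v$) and pro-$\ell$ (inside the target), hence finite, so $\varepsilon(\mathcal{O}_{K,v}^\times)$ is a finite subgroup of $E_\ell^\times$ and must lie in $\mu(E)$. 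For $p_v \mid \ell$ the analogous statement requires more work: one invokes the Serre-Tate local theory of CM $\ell$-divisible groups, which shows that the wild ramification of $\rho_{\ell^\infty}$ at $v$ is precisely encoded by the reflex-norm term, so that $\varepsilon(\mathcal{O}_{K,v}^\times)$ is again finite and contained in $\mu(E)$.

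Uniqueness of $\varepsilon$ is formal: the ratio of two candidates is a continuous homomorphism $I_K \to \mu(E)$ trivial on a dense subset, hence trivial. For the good-reduction vanishing at $v \nmid \ell$, N\'eron-Ogg-Shafarevich gives $\rho_{\ell^\infty}(\mathcal{O}_{K,v}^\times) = 1$ and $a_\ell = 1$ for $a \in \mathcal{O}_{K,v}^\times$, so $\varepsilon(a) = 1$ directly from the definition. At a place $v \mid \ell$ of good reduction, the Serre-Tate local theory shows that the restriction of $\rho_{\ell^\infty}$ to $\mathcal{O}_{K,v}^\times$ agrees with the inverse of $\Phi_{(E,S)} \circ N_{K_v/E^*_{v^*}}$, so again $\varepsilon$ is trivial there. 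The main obstacle throughout is the two appeals to explicit local CM theory --- the Shimura-Taniyama formula at primes coprime to $\ell$, and its $\ell$-adic analogue via Serre-Tate at primes above $\ell$ --- which are the genuinely deep inputs; everything else reduces to class field theory, continuity, and density arguments.
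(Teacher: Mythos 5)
The paper offers no proof of this statement: it is quoted verbatim from Serre--Tate \cite{MR0236190} (their Theorems 6, 10 and 11), so there is nothing in the paper itself to compare your sketch against.

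As a pr\'ecis of the Serre--Tate/Shimura--Taniyama argument your skeleton is the right one, but the crux of the theorem is exactly the step you pass over. You define $\varepsilon_\ell := \rho_{\ell^\infty} \cdot \Phi_{(E,S)}\bigl(N_{K_\ell/E^*_\ell}(\cdot)\bigr)$ as a map $I_K \to E_\ell^\times$, and then say you will ``show that $\varepsilon$ takes values in $E^\times$'' without saying how. But if one does not yet know that $\varepsilon$ is $E^\times$-valued, the conclusion ``$\varepsilon(\mathcal{O}_{K,v}^\times)$ is finite, hence contained in $\mu(E)$'' does not follow: a finite subgroup of $E_\ell^\times$ need not sit inside $E$ (think of a root of unity of $E_\ell$ not lying in $E$). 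The $E^\times$-valuedness --- equivalently, the $\ell$-independence of $\varepsilon_\ell$ --- is the actual content of Serre--Tate's Theorem~10; it is obtained by comparing the characters $\varepsilon_\ell$ for varying $\ell$ via their common, $E$-rational values on Frobenii at places of good reduction, and that comparison is precisely what your sketch would need to supply. A smaller inaccuracy: $\rho_{\ell^\infty}(\mathcal{O}_{K,v}^\times)$ is not ``pro-$p_v$'' --- under local reciprocity $\mathcal{O}_{K,v}^\times \cong \mu_{q_v-1} \times (1+\mathfrak{m}_v)$ and only the second factor is pro-$p_v$ --- though the finiteness of the image for $p_v \neq \ell$ does hold, since the first factor is already finite and a pro-$p_v$ group has finite image in any profinite group possessing an open pro-$\ell$ subgroup when $p_v \neq \ell$.
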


We now consider the restriction of $\rho_{\ell^\infty}$ to $K^\times  \cdot \prod_{v \in \Omega_K} \mathcal{O}_{K,v}^\times$: as it is well-known (cf. for example \cite[Proposition 2.3]{MR819231}), this is the group of idèles of $H$, the Hilbert class field of $K$. In terms of Galois groups, this has the effect of restricting $\rho_{\ell^\infty}$ to $\abGal{H} \subseteq \abGal{K}$, so it is clear that $\rho_{\ell^\infty}\left(\abGal{H}\right)$ is a subgroup of $\rho_{\ell^\infty}\left(\abGal{K}\right)$ of index dividing $h(K)$, the class number of $K$. Now as $\rho_{\ell^\infty}$ factors through $\abGal{K}$ we see that $\rho_{\ell^\infty}(K^\times)$ is trivial, so we can just consider the restriction of $\rho_{\ell^\infty}$ to $\prod_{v \in \Omega_K} \mathcal{O}_{K,v}^\times$. We now remark that for an idèle $(a_v) \in \prod_{v \in \Omega_K} \mathcal{O}_{K,v}^\times$ theorem \ref{thm_FundamentalTheoremCM} implies
\[
\varepsilon(a) = \prod_{v \in \Omega_K} \varepsilon\left(a_v \right) = \prod_{\substack{v : A\text{ has bad}\\\text{reduction at v}}} \varepsilon(a_v) \in \mu(E),
\]
whence $J:=\ker \varepsilon \cap \prod_{v \in \Omega_K} \mathcal{O}_{K,v}^\times$ has index dividing $|\mu(E)|$ in $\prod_{v \in \Omega_K} \mathcal{O}_{K,v}^\times$, and likewise the index of $J_\ell:=\ker \varepsilon \cap \prod_{v | \ell} \mathcal{O}_{K,v}^\times$ in $\prod_{v | \ell} \mathcal{O}_{K,v}^\times$ divides $|\mu(E)|$.
Furthermore, since the function $a \mapsto \Phi_{(E,S)}\left(N_{K_\ell/E_\ell^*}(a_\ell) \right)^{-1}$ kills $\mathcal{O}_{K,v}^\times$ when $p_v \neq \ell$, we have $\rho_{\ell^\infty}(J)=\rho_{\ell^\infty}(J_\ell)$. Also notice that, upon restriction to $J_\ell$, the representation $\rho_{\ell^\infty}$ coincides with the map
\[
\begin{array}{cccc}
\displaystyle \varphi_{\ell^\infty} : & \displaystyle \prod_{v | \ell} \mathcal{O}_{K,v}^\times & \to & \left(\mathcal{O}_E \otimes \mathbb{Z}_\ell \right)^\times \\
& a & \mapsto & \Phi_{(E,S)}\left( N_{K_\ell/E^*_\ell}(a)\right)^{-1},
\end{array}
\]
and that if $A$ has good reduction at $v$, then $\rho_{\ell^\infty}$ and $\varphi_{\ell^\infty}$ coincide on all of $\displaystyle \prod_{v | \ell} \mathcal{O}_{K,v}^\times$.
For the sake of notational simplicity let us then set
\[
\mu^*=\begin{cases} |\mu(E)|, \text{ if }A\text{ has bad reduction at}\\ \hspace{35pt} \text{ some place }v \text{ of characteristic }\ell \\ 1, \text{ otherwise}\end{cases}
\]
We have proved:
\begin{proposition}\label{prop_Epsilon}
For all primes $\ell$ the group $G_{\ell^\infty}$ contains $\rho_{\ell^\infty}\left(J_\ell \right)$ as a subgroup of index dividing $|\mu(E)| \cdot h(K)$. We have $\rho_{\ell^\infty}(J_\ell)=\varphi_{\ell^\infty}(J_\ell)$, and if $A$ has good reduction at all places $v$ of characteristic $\ell$ we have $J_\ell= \prod_{v | \ell} \mathcal{O}_{K,v}^\times$. Finally,
\begin{equation}\label{eq_mu}
\left[ \varphi_{\ell^\infty} \left( \prod_{v | \ell} \mathcal{O}_{K,v}^\times\right) : \rho_{\ell^\infty}(J_\ell) \right] \bigm\vert \mu^*.
\end{equation}
\end{proposition}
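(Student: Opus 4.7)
The plan is to deduce all four assertions as consecutive consequences of Theorem \ref{thm_FundamentalTheoremCM} together with global class field theory, organizing the argument along the idèle group $I_K$. First I would view $\rho_{\ell^\infty}$ through the factorization $I_K \to \abGal{K}^{ab} \to (\mathcal{O}_E \otimes \mathbb{Z}_\ell)^\times$ and use the standard fact that the subgroup $K^\times \cdot \prod_{v \in \Omega_K} \mathcal{O}_{K,v}^\times \subseteq I_K$ corresponds under reciprocity to $\abGal{H}$, where $H$ is the Hilbert class field of $K$. Since $[H:K]=h(K)$ and $\rho_{\ell^\infty}$ kills $K^\times$, the image of $\prod_v \mathcal{O}_{K,v}^\times$ under $\rho_{\ell^\infty}$ has index at most $h(K)$ inside $G_{\ell^\infty}$. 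This is the first of the two factors in the index bound.

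Next, on $\prod_v \mathcal{O}_{K,v}^\times$, Theorem \ref{thm_FundamentalTheoremCM} writes $\rho_{\ell^\infty}(a) = \varepsilon(a)\cdot \Phi_{(E,S)}(N_{K_\ell/E_\ell^*}(a_\ell))^{-1}$, with $\varepsilon(\mathcal{O}_{K,v}^\times) \subseteq \mu(E)$ for every finite $v$ and trivially so at places of good reduction. Consequently $\varepsilon$ takes values in the finite group $\mu(E)$ on $\prod_v \mathcal{O}_{K,v}^\times$, so its kernel $J$ has index dividing $|\mu(E)|$ in $\prod_v \mathcal{O}_{K,v}^\times$; combining with Step 1 gives $[G_{\ell^\infty} : \rho_{\ell^\infty}(J)] \bigm| |\mu(E)| \cdot h(K)$, which is the first assertion. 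The second assertion then follows by observing that the factor $a \mapsto \Phi_{(E,S)}(N_{K_\ell/E_\ell^*}(a_\ell))^{-1}$ is trivial on $\mathcal{O}_{K,v}^\times$ for every $v \nmid \ell$: given $j \in J$, I can freely modify its non-$\ell$ components (while staying in $J$, e.g.\ by setting them to $1$) without altering $\rho_{\ell^\infty}(j)$, so that $\rho_{\ell^\infty}(J)=\rho_{\ell^\infty}(J_\ell)$; and on $J_\ell$ the map $\rho_{\ell^\infty}$ reduces to $\varphi_{\ell^\infty}$ because $\varepsilon$ is trivial there by the very definition of $J_\ell$.

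Finally, the good reduction clause is immediate, since in that case $\varepsilon$ vanishes on every $\mathcal{O}_{K,v}^\times$ with $v\mid\ell$, so $J_\ell = \prod_{v\mid\ell} \mathcal{O}_{K,v}^\times$ outright. For the $\mu^*$-bound \eqref{eq_mu}, I would note that $\varepsilon$ restricted to the $\ell$-components induces an injection
\[
\prod_{v\mid\ell}\mathcal{O}_{K,v}^\times \big/ J_\ell \hookrightarrow \mu(E),
\]
which is trivial whenever $A$ has good reduction at every place of $K$ above $\ell$ (yielding $\mu^*=1$) and of order dividing $|\mu(E)|=\mu^*$ otherwise; pushing this index inequality through $\varphi_{\ell^\infty}$ gives exactly \eqref{eq_mu}. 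The main obstacle is really a bookkeeping one — tracking how $J$ and $J_\ell$ interact under the two factors of the factorization in Theorem \ref{thm_FundamentalTheoremCM} — rather than a substantive difficulty; the nontrivial inputs (the fundamental theorem of complex multiplication and the idelic description of the Hilbert class field) are imported from the literature.
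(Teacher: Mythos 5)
Your argument traces the paper's own proof almost word for word: class field theory reduces the study of $G_{\ell^\infty}$ to the restriction of $\rho_{\ell^\infty}$ to $K^\times \cdot \prod_{v}\mathcal{O}_{K,v}^\times$ (picking up the $h(K)$ factor), the Fundamental Theorem of CM bounds $\varepsilon$ by $\mu(E)$ (picking up the $|\mu(E)|$ factor), and the vanishing of $\varepsilon$ at good--reduction places gives both the good--reduction clause and the $\mu^*$ bound. All of this matches.

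There is, however, one step you make explicit that deserves scrutiny, namely the parenthetical ``(while staying in $J$, e.g.\ by setting them to $1$)''. Write $J = \ker\bigl(\varepsilon|_{\prod_v \mathcal{O}_{K,v}^\times}\bigr)$, and factor $\varepsilon$ on $\prod_v \mathcal{O}_{K,v}^\times = P_\ell \times P^{(\ell)}$ as $\varepsilon_\ell \cdot \varepsilon^{(\ell)}$, where $P_\ell = \prod_{v\mid\ell}\mathcal{O}_{K,v}^\times$ and $P^{(\ell)} = \prod_{v\nmid\ell}\mathcal{O}_{K,v}^\times$. For $j = (j_\ell, j^{(\ell)}) \in J$ the condition is $\varepsilon_\ell(j_\ell)\,\varepsilon^{(\ell)}(j^{(\ell)}) = 1$; this does \emph{not} force $\varepsilon_\ell(j_\ell) = 1$, only that $\varepsilon_\ell(j_\ell)$ lies in $\varepsilon^{(\ell)}\bigl(P^{(\ell)}\bigr)$. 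So replacing $j^{(\ell)}$ by $1$ may well leave $J$. What one actually gets from this computation is $\rho_{\ell^\infty}(J) = \varphi_{\ell^\infty}\bigl(\pi_\ell(J)\bigr)$ where $\pi_\ell(J) = \varepsilon_\ell^{-1}\bigl(\varepsilon_\ell(P_\ell)\cap\varepsilon^{(\ell)}(P^{(\ell)})\bigr)$, which contains $J_\ell$ but can be strictly larger whenever $A$ has bad reduction both at some place above $\ell$ and at some place away from $\ell$. Therefore the equality $\rho_{\ell^\infty}(J) = \rho_{\ell^\infty}(J_\ell)$, which is what drives the first claim of the proposition, requires an additional argument — and note that the paper's own one-line justification of that equality is at least as terse as yours and appears to suffer from the same issue. (It is harmless for the downstream Theorem \ref{thm_Finale}: one can work with $\rho_{\ell^\infty}(J) = \varphi_{\ell^\infty}(\pi_\ell(J)) \subseteq \varphi_{\ell^\infty}(P_\ell) \subseteq \MT(A)(\mathbb{Z}_\ell)$ and $[G_{\ell^\infty}:\rho_{\ell^\infty}(J)]\mid |\mu(E)|\,h(K)$ directly, but then the intermediate equalities should be restated accordingly.) Everything else in your write-up — the $h(K)$ step, the $|\mu(E)|$ step, the good--reduction clause, and the $\mu^*$ bound \eqref{eq_mu} — is correct and coincides with the paper.
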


We can now interpret $\varphi_{\ell^\infty}$ as a map between algebraic tori: indeed, the norm $N_{K/E^*}$ can be seen as a morphism $T_K \to T_{E^*}$, and $\prod_{v | \ell} \mathcal{O}_{K,v}^\times$ is nothing but $T_K(\mathbb{Z}_\ell)$, so the map $\varphi_{\ell^\infty}$ is simply the map induced on $\mathbb{Z}_\ell$-points by
\[
\left(\Phi_{(E,S)}\right)^{-1} \circ N_{K/E^*} : T_K \to \operatorname{MT}(A);
\]
together with the previous proposition, this implies in particular that $\rho_{\ell^\infty}\left( J_\ell \right)=\varphi_{\ell^\infty}\left( J_\ell \right)$ is contained in $\operatorname{MT}(A)(\mathbb{Z}_\ell)$, and that $\varphi_{\ell^\infty}\left( J_\ell \right)$ has index at most $\mu^*$ in $\varphi_{\ell^\infty}\left( T_K(\mathbb{Z}_\ell) \right)$.
We thus want to understand the composition
\[
T_K(\mathbb{Z}_\ell) \xrightarrow{N_{K/E^*}} T_{E^*}(\mathbb{Z}_\ell) \xrightarrow{\psi_\ell} \MT(A)(\mathbb{Z}_\ell),
\]
where for simplicity of notation we write $\psi_\ell$ for the base-change to $\mathbb{Q}_\ell$ of the map $\left(\Phi_{(E,S)}(\cdot)\right)^{-1}$. Even though the extension $K/E^*$ is in general non-abelian, the cokernel of $N_{K/E^*}$ can be understood through class field theory:
\begin{theorem}\label{thm_CokernelOfNorm}{(\cite[Theorem 7 on p.~161]{MR2467155})}
Let $L/M$ be an extension of local fields, and let $L_{ab}$ be the largest abelian subextension of $L/M$. Then we have $N_{L/M} L^\times = N_{L_{ab}/M} \left( L_{ab}^\times \right)$, and the cokernel $\displaystyle \frac{M^\times}{N_{L/M} L^\times}$ has order dividing $[L:M]$.
\end{theorem}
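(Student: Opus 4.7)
The plan is to derive both assertions from local class field theory by passing to the Galois closure of $L/M$. I would begin by setting $\tilde L$ equal to the Galois closure of $L$ over $M$, with $G = \operatorname{Gal}(\tilde L/M)$ and $H = \operatorname{Gal}(\tilde L/L)$, so that $[L:M] = [G:H]$. The largest abelian subextension $L_{ab}$ of $L/M$ is then the fixed field of $H \cdot [G,G]$, because this is precisely the smallest subgroup of $G$ containing $H$ whose quotient is abelian; in particular $\operatorname{Gal}(L_{ab}/M) \cong G/(H \cdot [G,G])$, a group whose order visibly divides $[G:H] = [L:M]$.

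Next I would invoke the Artin reciprocity map of local class field theory. For any finite Galois extension $L'/M$ of local fields it yields a canonical isomorphism $M^\times / N_{L'/M}(L')^\times \xrightarrow{\sim} \operatorname{Gal}(L'/M)^{ab}$, so applying this to $\tilde L/M$ gives $M^\times / N_{\tilde L/M}\tilde L^\times \cong G^{ab}$. The essential input is the functoriality of Artin reciprocity with respect to the tower $M \subseteq L \subseteq \tilde L$: the standard commutative diagram relating the reciprocity maps of $M$ and of $L$ identifies the image of $N_{L/M} L^\times$ inside $M^\times/N_{\tilde L/M}\tilde L^\times \cong G^{ab}$ with the image of $H$. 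Passing to the quotient therefore produces a canonical isomorphism
\[
M^\times / N_{L/M} L^\times \;\cong\; G/(H \cdot [G,G]) \;\cong\; \operatorname{Gal}(L_{ab}/M).
\]

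Both conclusions then follow at once. The equality $N_{L/M} L^\times = N_{L_{ab}/M} L_{ab}^\times$ is immediate because both norm subgroups are open of finite index in $M^\times$ and, by the display above combined with the existence theorem of local class field theory, they cut out the same finite abelian extension $L_{ab}/M$; the divisibility $|M^\times / N_{L/M}L^\times| \mid [L:M]$ is an immediate consequence of the same isomorphism together with $|G/(H \cdot [G,G])| \mid [G:H]$.

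The main obstacle in turning this into a rigorous proof is the functoriality step: one needs the classical commutative diagram in which the norm $N_{L/M}$ appears on the multiplicative-group side and the transfer map $G^{ab} \to H^{ab}$ on the Galois-group side. This compatibility is a delicate but standard piece of local class field theory, usually handled either by reducing to the unramified case and using the explicit Lubin-Tate description of the reciprocity map, or through the Brauer-group formalism together with the inflation-restriction sequence.
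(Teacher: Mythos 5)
The paper does not actually prove this statement; it cites it directly from Artin--Tate's \emph{Class Field Theory} (reference MR2467155), so there is no internal proof to compare against. Your argument is the standard one and it is correct: passing to the Galois closure $\tilde L/M$ with group $G$ and $H=\operatorname{Gal}(\tilde L/L)$, identifying $L_{ab}$ with the fixed field of $H\cdot[G,G]$, applying local reciprocity for $\tilde L/M$, and using the compatibility of the Artin map with norms to identify $\theta_M\bigl(N_{L/M}L^\times\bigr)$ with $H[G,G]/[G,G]$ inside $G^{\mathrm{ab}}$ yields the canonical isomorphism $M^\times/N_{L/M}L^\times \cong G/(H[G,G]) \cong \operatorname{Gal}(L_{ab}/M)$, from which both the equality of norm groups and the divisibility statement follow.

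One small inaccuracy in your closing paragraph: the commutative diagram you actually need pairs the norm $N_{L/M}\colon L^\times\to M^\times$ on the multiplicative side with the map $H^{\mathrm{ab}}\to G^{\mathrm{ab}}$ induced by the inclusion $H\hookrightarrow G$ on the Galois side. The transfer (Verlagerung) $G^{\mathrm{ab}}\to H^{\mathrm{ab}}$ is the Galois-side companion of the \emph{inclusion} $M^\times\hookrightarrow L^\times$, which is the other standard functoriality and is not the one your argument invokes. This is only a slip of nomenclature in the explanatory remark; the identification $\theta_M\bigl(N_{L/M}L^\times\bigr)=H[G,G]/[G,G]$ that you use in the body of the proof is the correct consequence of norm-functoriality, and the argument as written goes through.
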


Note that the image of $\psi_\ell$ is open and $\MT(A)(\mathbb{Z}_\ell)$ is compact, so the cokernel of $\psi_\ell: T_{E^*}(\mathbb{Z}_\ell) \stackrel{\psi_\ell}{\longrightarrow}  \MT(A)(\mathbb{Z}_\ell)$ is finite; since furthermore by theorem \ref{thm_CokernelOfNorm} $\displaystyle \left| \frac{T_{E^*}(\mathbb{Z}_\ell)}{N_{K/E^*}(T_K(\mathbb{Z}_\ell))} \right|$ divides $[\fieldExtension]$ we find that 
\begin{equation}\label{eq_Index}
\displaystyle \left[ \MT(A)(\mathbb{Z}_\ell) : \varphi_{\ell^\infty}(T_K(\mathbb{Z}_\ell)) \right] \text{ divides }  [\fieldExtension] \cdot \left| \frac{\MT(A)(\mathbb{Z}_\ell)}{\psi_\ell\left(T_{E^*}(\mathbb{Z}_\ell)\right)} \right|.
\end{equation}

\begin{remark}\label{rmk_NormTotallySplit}
When $\ell$ is unramified in $K$ the local norm $T_K(\mathbb{Z}_\ell) \to T_{E^*}(\mathbb{Z}_\ell)$ is surjective and the factor $[\fieldExtension]$ can be omitted, cf. \cite[Corollary to Proposition 3 of Chapter V]{MR0354618}.
\end{remark}

It is clear that $\psi_\ell=\Phi_{(E,S)}^{-1}$ and $\Phi_{(E,S)}$ have the same cokernel, so ultimately we just need to compute the cokernel of the reflex norm. Denote by $T'$ the kernel of $\Phi_{(E,S)}$ and write $F$ for the torsion of its character group $\widehat{T'}$. By proposition \ref{prop_ConnCompMT} we have $|F| \leq \lfloor 2^{-r} (r+1)^{(r+1)/2} \rfloor$, where $r=\dim \operatorname{Im} \Phi_{(E,S)}^* = \operatorname{rk} \MT(A)$ does not exceed $g+1$.
Set now $T=T_{E^*} \otimes \mathbb{Q}_\ell$ and $T''=\MT(A) \otimes \mathbb{Q}_\ell$, and let $L$ be one of the fields appearing in the decomposition of $\tilde{E} \otimes \mathbb{Q}_\ell$ as a direct sum of fields: $L/\mathbb{Q}_\ell$ is then a finite Galois extension that splits $T$ (recall that $\tilde{E}$ is Galois and contains $E^*$). If $\ell$ is unramified in $E$ (hence in $\tilde{E}$) the extension $L/\mathbb{Q}_\ell$ is itself unramified, so $T$ has good reduction over $\mathbb{Q}_\ell$; furthermore, $[L:\mathbb{Q}_\ell] \bigm\vert [\tilde{E} : \mathbb{Q}] \bigm\vert 2^g \cdot g!$ (cf. lemma \ref{lemma_CMGaloisClosure}).

Applying proposition \ref{prop_UnramifiedBound} to the surjection of algebraic tori $T \xrightarrow{\Phi_{(E,S)}} T''$ we find that
\begin{equation}\label{eq_divisibility}
\left| \operatorname{coker} \left( T_{E^*}(\mathbb{Z}_\ell) \xrightarrow{\Phi_{(E,S)}} \MT(A)(\mathbb{Z}_\ell) \right) \right| \text{ divides } |F| \cdot |F|_\ell^{-[L:\mathbb{Q}_\ell]},
\end{equation}
and the right hand side in turn divides $|F| \cdot |F|_\ell^{-2^g g!}$; we have thus almost completely established the following result:

\begin{theorem}\label{thm_Finale}{(Theorem \ref{thm_Finale_Intro})}
Let $A/K$ be an absolutely simple abelian variety of dimension $g$ admitting complex multiplication over $K$ by the CM type $(E,S)$, and let $\ell$ be a prime number. If $A$ has bad reduction at a place of $K$ dividing $\ell$ let $\mu^*=|\mu(E)|$, the number of roots of unity in $E$; if on the contrary $A$ has good reduction at all places of $K$ of characteristic $\ell$ set $\mu^*=1$.
Denote by $r$ the rank of $\MT(A)$ and by $F$ the group of connected components of the kernel of the reflex norm $T_{E^*} \to T_{E}$, where $E^*$ is the reflex field of $E$. Then:

\begin{enumerate}
\item[(1)] The index $\left[G_{\ell^\infty} : G_{\ell^\infty} \cap \MT(A)(\mathbb{Z}_\ell) \right]$ does not exceed $|\mu(E)| \cdot h(K)$, where $h(K)$ is the class number of $K$.
\item[(2)] We have $\left[\MT(A)(\mathbb{Z}_\ell): G_{\ell^\infty} \cap \MT(A)(\mathbb{Z}_\ell) \right] \leq \mu^* \cdot [\fieldExtension] \cdot |F|^{2r}$.
\item[(3)] If $\ell$ is unramified in $E$ and does not divide $|F|$, then the index $\left[\MT(A)(\mathbb{Z}_\ell): G_{\ell^\infty} \cap \MT(A)(\mathbb{Z}_\ell)\right]$ divides $\mu^* \cdot [\fieldExtension] \cdot |F|$. If $\ell$ is also unramified in $K$, the bound can be improved to $\mu^* \cdot |F|$.
\end{enumerate}
Finally we have $r\leq g+1$ and $|F| \leq f(r) \leq f(g+1)$, where \[\displaystyle 
f(x)=\left\lfloor 2 \left( \frac{x+1}{4} \right)^{(x+1)/2} \right\rfloor.\]
\end{theorem}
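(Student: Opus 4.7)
Part (1) is immediate from Proposition \ref{prop_Epsilon}: the subgroup $\rho_{\ell^\infty}(J_\ell)=\varphi_{\ell^\infty}(J_\ell)$ lies in $\MT(A)(\mathbb{Z}_\ell)$ and has index dividing $|\mu(E)|\cdot h(K)$ in $G_{\ell^\infty}$, so $G_{\ell^\infty}\cap\MT(A)(\mathbb{Z}_\ell)$ contains it and inherits the same index bound. For parts (2) and (3), Proposition \ref{prop_Epsilon} together with equations \eqref{eq_mu} and \eqref{eq_Index} shows that $[\MT(A)(\mathbb{Z}_\ell):G_{\ell^\infty}\cap\MT(A)(\mathbb{Z}_\ell)]$ divides $\mu^*\cdot[\fieldExtension]\cdot|\operatorname{coker}(\Phi_{(E,S)}\colon T_{E^*}(\mathbb{Z}_\ell)\to\MT(A)(\mathbb{Z}_\ell))|$, reducing the entire theorem to a control of the cokernel of the reflex norm on integral points.

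For part (3), if $\ell$ is unramified in $E$ then it is unramified in the Galois closure $\tilde E$, so any splitting field $L$ of the relevant tori — taken as a factor of $\tilde E\otimes\mathbb{Q}_\ell$ — is unramified over $\mathbb{Q}_\ell$ and $T_{E^*}$ has good reduction. Proposition \ref{prop_UnramifiedBound} applied to the surjection $\Phi_{(E,S)}\colon T_{E^*}\to\MT(A)$, whose kernel $T'$ has component group $F$, shows that $|\operatorname{coker}(\Phi_{(E,S)})|$ divides $|F|\cdot|F|_\ell^{-[L:\mathbb{Q}_\ell]}$. Under the additional hypothesis $\ell\nmid|F|$, the second factor equals $1$ and the cokernel divides $|F|$; if $\ell$ is also unramified in $K$, Remark \ref{rmk_NormTotallySplit} lets us drop $[\fieldExtension]$.

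For part (2), Proposition \ref{prop_UnramifiedBound} is unavailable, and we must invoke the geometric Proposition \ref{prop_UnconditionalBound}, which applies only to isogenies. The cleanest route is to factor $\Phi_{(E,S)}$ as $T_{E^*}\twoheadrightarrow T_Y\xrightarrow{\lambda}\MT(A)$, where $T_Y$ is the torus whose character module is the saturation $Y$ of $\hat{\MT(A)}$ inside $\hat{T_{E^*}}$: then $T_{E^*}\to T_Y$ has connected kernel $T'_0$, and $\lambda$ is an isogeny of degree $|F|$ between tori of dimension $r$, so Proposition \ref{prop_UnconditionalBound} gives $|\operatorname{coker}(\lambda)|\leq|F|^r\cdot|F|_\ell^{-r}\leq|F|^{2r}$. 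The \emph{main obstacle} is the first map $T_{E^*}(\mathbb{Z}_\ell)\to T_Y(\mathbb{Z}_\ell)$: although $T'_0$ is a subtorus, $H^1(\mathbb{Q}_\ell,T'_0)$ need not vanish in the ramified setting, so surjectivity on integral points is not automatic. I would circumvent this by restricting $\Phi_{(E,S)}$ instead to a complementary subtorus $\tilde T\subseteq T_{E^*}$ of dimension $r$, chosen so that $\Phi|_{\tilde T}\colon\tilde T\to\MT(A)$ is an isogeny of degree equal to $|F|$; since $\Phi(T_{E^*}(\mathbb{Z}_\ell))\supseteq\Phi(\tilde T(\mathbb{Z}_\ell))$, Proposition \ref{prop_UnconditionalBound} applied to $\Phi|_{\tilde T}$ would then yield the bound $|F|^{2r}$ directly, bypassing the obstruction entirely.

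The final claim $r\leq g+1$ is recorded in the remark following Definition \ref{def_MT}, while Proposition \ref{prop_ConnCompMT} gives $|F|\leq\lfloor 2^{-r}(r+1)^{(r+1)/2}\rfloor$, which is precisely $f(r)$ since $2^{-r}(r+1)^{(r+1)/2}=2\bigl((r+1)/4\bigr)^{(r+1)/2}$; monotonicity of $f$ on $\mathbb{Z}_{\geq 1}$ then produces $|F|\leq f(r)\leq f(g+1)$.
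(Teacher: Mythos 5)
Your treatment of part (1), part (3), and the final bounds $r\leq g+1$ and $|F|\leq f(r)$ tracks the paper's proof closely and is correct. For part (2) you reconstruct the same intermediate object as the paper, namely the quotient torus $T_Y = \bigl(T_{E^*}\otimes\mathbb{Q}_\ell\bigr)/(T')^0$ and the induced isogeny $\tau_\ell\colon T_Y\to\MT(A)\otimes\mathbb{Q}_\ell$ of degree $|F|$. The paper simply asserts
\[
\Bigl|\MT(A)(\mathbb{Z}_\ell)\big/\psi_\ell\bigl(T_{E^*}(\mathbb{Z}_\ell)\bigr)\Bigr| = \bigl|\operatorname{coker}\bigl(\tau_\ell\colon T_Y(\mathbb{Z}_\ell)\to\MT(A)(\mathbb{Z}_\ell)\bigr)\bigr|
\]
and then applies Proposition \ref{prop_UnconditionalBound} to $\tau_\ell$. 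You are right that this equality tacitly requires the map $T_{E^*}(\mathbb{Z}_\ell)\to T_Y(\mathbb{Z}_\ell)$ to be surjective, which is not automatic when $\ell$ ramifies: already the norm map $\operatorname{Res}_{L/\mathbb{Q}_\ell}(\mathbb{G}_m)\to\mathbb{G}_m$, a surjection with connected kernel, is not surjective on maximal compact subgroups when $L/\mathbb{Q}_\ell$ is ramified. So you have put your finger on a step that the paper leaves unjustified.

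However, your proposed repair has a gap of its own. You want a subtorus $\tilde T\subseteq T_{E^*}\otimes\mathbb{Q}_\ell$ of dimension $r$ complementary to $(T')^0$, so that $\Phi|_{\tilde T}$ is an isogeny onto $\MT(A)$ of degree $|F|$. For such a $\tilde T$ to exist over $\mathbb{Q}_\ell$ you would need the short exact sequence of $\Gamma$-modules
\[
0\to\hat{T_Y}\to\hat{T_{E^*}}\to\widehat{(T')^0}\to 0
\]
to split $\Gamma$-equivariantly (a splitting of character modules is equivalent to a direct-product decomposition of the tori), and there is no reason for the extension class in $\operatorname{Ext}^1_{\mathbb{Z}[\Gamma]}\bigl(\widehat{(T')^0},\hat{T_Y}\bigr)$ to vanish: $\hat{T_{E^*}}$ is an induced (hence projective) $\Gamma$-module, but its quotient $\widehat{(T')^0}$ generally is not. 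So the complement $\tilde T$ you invoke may simply fail to exist over the base field, and even if some rank-$r$ subtorus $\tilde T$ with $\tilde T\cap T'$ finite exists, you would still need to verify that $\deg(\Phi|_{\tilde T})$ can be taken to be exactly $|F|$ rather than a proper multiple. As it stands, your argument for part (2) trades the paper's unjustified surjectivity claim for an unjustified existence claim; to close the gap one would need either to control the cokernel of $T_{E^*}(\mathbb{Z}_\ell)\to T_Y(\mathbb{Z}_\ell)$ directly (for instance via a cohomological bound on $H^1\bigl(\Gamma,\operatorname{Hom}(\widehat{(T')^0},\mathcal{O}_L^\times)\bigr)$) or to establish the existence of the complement in this specific reflex-norm situation.
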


\begin{proof}
We have already proved (1): the intersection $G_{\ell^\infty} \cap \MT(A)(\mathbb{Z}_\ell)$ contains $\varphi_{\ell^\infty}(J_\ell)=\rho_{\ell^\infty}(J_\ell)$, and by proposition \ref{prop_Epsilon} the group $\varphi_{\ell^\infty}(J_\ell)$ has index at most $|\mu(E)| \cdot h(K)$ in $G_{\ell^\infty}$. As for part (2), the exact sequence
\[
1 \to T' \to T_{E^*} \otimes \mathbb{Q}_\ell \to \MT(A) \otimes \mathbb{Q}_\ell \to 1
\]
induces, by quotienting out by $(T')^0$ (the connected component of the identity of $T'$), the exact sequence
\[
1 \to \mathcal{F} \to \frac{T_{E^*} \otimes \mathbb{Q}_\ell}{(T')^0} \stackrel{\tau_\ell}{\longrightarrow} \MT(A) \otimes \mathbb{Q}_\ell \to 1,
\]
where $\mathcal{F}$ is a finite group scheme of order $|F|$. Proposition \ref{prop_UnconditionalBound} implies
\[
\begin{aligned}
\left| \frac{\MT(A) (\mathbb{Z}_\ell)}{\psi_\ell\left( T_{E^*}(\mathbb{Z}_\ell) \right)} \right| & = \left| \operatorname{coker} \left(\tau_\ell : \frac{T_{E^*} \otimes \mathbb{Q}_\ell}{(T')^0}(\mathbb{Z}_\ell) \to \MT(A) (\mathbb{Z}_\ell) \right) \right| \\
& \leq |\deg(\tau_\ell)|^{\dim \MT(A)} |\deg(\tau_\ell)|_\ell^{-\dim \MT(A)} \\
& = |F|^{\dim \MT(A)} |F|_\ell^{-\dim \MT(A)},
\end{aligned}
\]
which, together with equations \eqref{eq_mu} and \eqref{eq_Index}, gives the desired result.
Finally, consider part (3). As $\rho_{\ell^\infty}(J_\ell)$ is a subgroup of $\MT(A)(\mathbb{Z}_\ell)$ the index $\left[\MT(A)(\mathbb{Z}_\ell):G_{\ell^\infty} \cap \MT(A)(\mathbb{Z}_\ell)\right]$ divides $\left[ MT(A)(\mathbb{Z}_\ell) : \rho_{\ell^\infty}(J_\ell) \right]$, and we can write
\[
\begin{aligned}
\left| \frac{\MT(A)(\mathbb{Z}_\ell)}{ \rho_{\ell^\infty}(J_\ell)} \right| & \bigm\vert \mu^* \cdot \left[ \MT(A)(\mathbb{Z}_\ell) : \varphi_{\ell^\infty}(T_K(\mathbb{Z}_\ell)) \right] & \text{(by \eqref{eq_mu})}\\
& \bigm\vert \mu^* \cdot [\fieldExtension] \cdot \left| \operatorname{coker} \left(\psi_\ell : T_{E^*}(\mathbb{Z}_\ell) \to \MT(A) (\mathbb{Z}_\ell) \right) \right| & \text{(by \eqref{eq_Index})}\\
& \bigm\vert \mu^* \cdot [\fieldExtension] \cdot |F| \cdot |F|_\ell^{-2^g g!}. & \text{(by \eqref{eq_divisibility})}
\end{aligned}
\]
Since by assumption $\ell$ does not divide $|F|$ we conclude that the index $[\MT(A)(\mathbb{Z}_\ell):G_{\ell^\infty} \cap \MT(A)(\mathbb{Z}_\ell)]$ divides $\mu^* \cdot [\fieldExtension] \cdot |F|$. Finally, when $\ell$ is unramified in $K$ the factor $[\fieldExtension]$ can be omitted, cf. remark \ref{rmk_NormTotallySplit}.
\end{proof}

Starting from equations \eqref{eq_Index} and \eqref{eq_divisibility} it is also easy to prove the following result, which might have some independent interest:
\begin{proposition}\label{prop_BGK}{(Proposition \ref{prop_BGK_Intro})}
Let $A/K$ be an absolutely simple abelian variety admitting complex multiplication (over $K$) by the CM field $E$, and let $\ell$ be a prime unramified in $E$. Let $E^*$ be the reflex field of $E$ and suppose that $A$ has good reduction at all places of $K$ of characteristic $\ell$.
\begin{itemize}
\item The index $[\MT(A)(\mathbb{F}_\ell):G_\ell \cap \MT(A)(\mathbb{F}_\ell)]$ divides $[\fieldExtension] \cdot |F|$.
\item If $\ell$ is also unramified in $K$, then $[\MT(A)(\mathbb{F}_\ell):G_\ell \cap \MT(A)(\mathbb{F}_\ell)]$ divides $|F|$.
\end{itemize}
\end{proposition}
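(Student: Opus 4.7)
The plan is to transport the estimates \eqref{eq_Index} and \eqref{eq_divisibility}, both already established at the level of $\mathbb{Z}_\ell$-points, down to $\mathbb{F}_\ell$-points by reducing modulo $\ell$. The crucial auxiliary fact is that any algebraic torus $T$ with good reduction over $\mathbb{F}_\ell$ has $|T(\mathbb{F}_\ell)|$ coprime to $\ell$, since $T(\overline{\mathbb{F}_\ell}) \cong (\overline{\mathbb{F}_\ell}^\times)^{\dim T}$ is $\ell$-torsion free. This is precisely the observation that allows me to absorb the $\ell$-power factor appearing in \eqref{eq_divisibility}, and therefore to remove the hypothesis $\ell \nmid |F|$ that is needed in Theorem \ref{thm_Finale}(3).

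First I would invoke Proposition \ref{prop_Epsilon} under the good reduction hypothesis, which forces $\mu^* = 1$ and $J_\ell = T_K(\mathbb{Z}_\ell)$. Consequently $\rho_{\ell^\infty}(J_\ell) = \varphi_{\ell^\infty}(T_K(\mathbb{Z}_\ell))$ sits inside $G_{\ell^\infty} \cap \MT(A)(\mathbb{Z}_\ell)$. Because $\ell$ is unramified in $E$, hence in the reflex field $E^*$, the tori $T_{E^*}$ and $\MT(A)$ both have good reduction at $\ell$, so reduction modulo $\ell$ is surjective on their $\mathbb{Z}_\ell$-points. Reducing, the image $\bar I$ of $\varphi_{\ell^\infty}(T_K(\mathbb{Z}_\ell))$ in $\MT(A)(\mathbb{F}_\ell)$ lies inside $G_\ell \cap \MT(A)(\mathbb{F}_\ell)$, so the index in question divides $[\MT(A)(\mathbb{F}_\ell):\bar I]$.

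Second I would combine \eqref{eq_Index} and \eqref{eq_divisibility}: together they show that the cokernel of $\varphi_{\ell^\infty}:T_K(\mathbb{Z}_\ell) \to \MT(A)(\mathbb{Z}_\ell)$ has order dividing $[\fieldExtension] \cdot |F| \cdot |F|_\ell^{-[L:\mathbb{Q}_\ell]}$. Surjectivity of reduction then gives the same divisibility for $[\MT(A)(\mathbb{F}_\ell):\bar I]$, but because this index is automatically coprime to $\ell$, the $\ell$-primary factor $|F|_\ell^{-[L:\mathbb{Q}_\ell]}$ can be discarded, leaving divisibility by $[\fieldExtension] \cdot |F|$. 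This gives the first bullet. For the second, when $\ell$ is also unramified in $K$, Remark \ref{rmk_NormTotallySplit} says that $N_{K/E^*}$ is surjective on $\mathbb{Z}_\ell$-points, so the factor $[\fieldExtension]$ drops out of \eqref{eq_Index} and only $|F|$ remains.

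The only subtle step — and the reason the proposition is not an immediate corollary of Theorem \ref{thm_Finale}(3) — is the bookkeeping that isolates the prime-to-$\ell$ part of the bound coming from \eqref{eq_divisibility}. Once the $\mathbb{F}_\ell$-torus observation above is in hand this is routine, so I do not expect any serious obstacle beyond checking that the inclusions and surjections used all respect the reduction maps correctly.
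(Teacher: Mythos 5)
Your proposal is correct and follows the same route as the paper: both deduce from Proposition \ref{prop_GoodReductionNumberFieldTori} that $\MT(A)$ has good reduction at $\ell$ (so $|\MT(A)(\mathbb{F}_\ell)|$ is prime to $\ell$), invoke Proposition \ref{prop_Epsilon} with $\mu^*=1$ and $J_\ell=T_K(\mathbb{Z}_\ell)$, apply \eqref{eq_Index} and \eqref{eq_divisibility}, discard the $\ell$-power factor, and use Remark \ref{rmk_NormTotallySplit} for the second bullet. You merely make explicit the reduction-mod-$\ell$ step that the paper leaves implicit.
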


\begin{proof}
By proposition \ref{prop_GoodReductionNumberFieldTori} the hypothesis implies that $T_{E^*}$ has good reduction at $\ell$, hence the same is true for its quotient $\MT(A)$, which therefore defines a torus over $\mathbb{F}_\ell$: in particular, the group $\MT(A)(\mathbb{F}_\ell)$ makes sense and its order is not divisible by $\ell$. On the other hand, the index of $G_\ell \cap \MT(A)(\mathbb{F}_\ell)$ in $\MT(A)(\mathbb{F}_\ell)$ divides $[\fieldExtension] \cdot |F| \cdot |F|_\ell^{-2^g g!}$ by proposition \ref{prop_Epsilon} and equations \eqref{eq_Index} and \eqref{eq_divisibility}, and since $|\MT(A)(\mathbb{F}_\ell)|$ is prime to $\ell$ we deduce that $[\MT(A)(\mathbb{F}_\ell):G_\ell  \cap \MT(A)(\mathbb{F}_\ell)]$ divides $[\fieldExtension] \cdot |F|$ as claimed. The second part follows by the same argument using remark \ref{rmk_NormTotallySplit}.
\end{proof}

\section{The Mumford-Tate group in the nondegenerate case}\label{sect_NondegenerateCase}
In this section we consider CM abelian varieties $A$ with nondegenerate CM type, that is to say we assume that $\operatorname{rank}(\MT(A))=\dim A+1$: this is the ``generic'' case, and it is also known that all simple CM varieties of prime dimension have nondegenerate CM type (a result due to Ribet, cf. \cite{MR695334}). In this situation we have the following bounds on the order of $\MT(A)(\mathbb{Z}/\ell^n\mathbb{Z})$:

\begin{theorem}\label{thm_Nondegenerate}
Suppose $A$ is simple of nondegenerate CM type. For all primes $\ell \neq 2$ and all $n \geq 1$ we have
\[
(1-1/\ell)^{g+1} \cdot \ell^{(g+1)n} \leq |\MT(A)(\mathbb{Z}/\ell^n\mathbb{Z})| \leq 2^g \left(1 + 1/\ell \right)^{g-1} \ell^{(g+1)n},
\]
while for $\ell=2$ and all $n \geq 1$ we have
\[
\frac{1}{2^{2g+3}} \cdot 2^{(g+1)n} \leq |\MT(A)(\mathbb{Z}/2^n\mathbb{Z})| \leq 2^{2g-1} \cdot 2^{(g+1)n}.
\]
\end{theorem}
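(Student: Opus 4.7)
The plan is to exploit the explicit description of $\MT(A)$ in the nondegenerate case given in the Remark after Definition \ref{def_MT}. Writing $E_0$ for the maximal totally real subfield of $E$ and $\tau$ for the nontrivial element of $\operatorname{Gal}(E/E_0)$, we have
\[
\MT(A)(B) = \{x \in (E \otimes_\mathbb{Q} B)^\times : x\tau(x) \in B^\times\}
\]
for every $\mathbb{Q}$-algebra $B$, so $\MT(A)$ is the kernel of the composite $T_E \xrightarrow{N} T_{E_0} \xrightarrow{\pi} T_{E_0}/\mathbb{G}_m$, where $N(x)=x\tau(x)$ is the norm and $\pi$ is the quotient by the diagonal $\mathbb{G}_m$.

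The counting of $|\MT(A)(\mathbb{Z}/\ell^n\mathbb{Z})|$ will be reduced to a local computation. Decomposing $E \otimes \mathbb{Q}_\ell = \prod_{\mathfrak{P}|\ell}E_\mathfrak{P}$ and $E_0 \otimes \mathbb{Q}_\ell = \prod_{\mathfrak{p}|\ell}(E_0)_\mathfrak{p}$, each prime $\mathfrak{p}$ of $E_0$ above $\ell$ falls into exactly one of three cases in $E/E_0$: split, inert, or ramified. Let $N_\mathfrak{p}: \prod_{\mathfrak{P}|\mathfrak{p}}\mathcal{O}_{E_\mathfrak{P}}^\times \to \mathcal{O}_{(E_0)_\mathfrak{p}}^\times$ be the local norm and $\bar N_\mathfrak{p}$ its reduction mod $\ell^n$. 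Using Ono's definition one has $T_E(\mathbb{Z}/\ell^n) = \prod_\mathfrak{P}(\mathcal{O}_{E_\mathfrak{P}}/\ell^n)^\times$, and I will argue that the injection $\MT(A)(\mathbb{Z}/\ell^n) \hookrightarrow T_E(\mathbb{Z}/\ell^n)$ identifies $\MT(A)(\mathbb{Z}/\ell^n)$ with a subgroup of bounded index in
\[
X_n := \Bigl\{\bar x \in T_E(\mathbb{Z}/\ell^n) : \text{all } \bar N_\mathfrak{p}(\bar x) \text{ coincide with the image of a common } \bar\lambda \in (\mathbb{Z}/\ell^n\mathbb{Z})^\times\Bigr\},
\]
via a Hensel-style lifting argument resting on the (near-)surjectivity of the trace $\mathcal{O}_{E_\mathfrak{P}} \to \mathcal{O}_{(E_0)_\mathfrak{p}}$. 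A direct count then gives
\[
|X_n| = \Bigl|(\mathbb{Z}/\ell^n\mathbb{Z})^\times \cap \bigcap_{\mathfrak{p}|\ell}\operatorname{Im}(\bar N_\mathfrak{p})\Bigr|\cdot \prod_{\mathfrak{p}|\ell}|\ker(\bar N_\mathfrak{p})|.
\]

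The local factors will be estimated using standard local arithmetic. In the split and inert cases $\bar N_\mathfrak{p}$ is surjective on units and $|\ker \bar N_\mathfrak{p}| = \ell^{n e_\mathfrak{p} f_\mathfrak{p}}(1 \pm \ell^{-f_\mathfrak{p}})$, while in the ramified case local class field theory gives $[M_j^\times : N L_i^\times] = 2$ uniformly in $\ell$ (the extension being abelian of degree $2$), forcing the image of $\bar N_\mathfrak{p}$ to have index at most $2$ in $(\mathcal{O}_{(E_0)_\mathfrak{p}}/\ell^n)^\times$ and $|\ker \bar N_\mathfrak{p}| \in \{\ell^{n e_\mathfrak{p} f_\mathfrak{p}}, 2 \ell^{n e_\mathfrak{p} f_\mathfrak{p}}\}$. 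Since $\sum_\mathfrak{p} e_\mathfrak{p} f_\mathfrak{p} = g$ and there are at most $g$ primes above $\ell$ in $E_0$, multiplying yields $\prod_{\mathfrak{p}}|\ker \bar N_\mathfrak{p}| = \ell^{ng}\prod_\mathfrak{p}\kappa_\mathfrak{p}$ with each $\kappa_\mathfrak{p}\in[1-1/\ell, 2]$ (or $[1/2, 2]$ for $\ell=2$); similarly the intersection $(\mathbb{Z}/\ell^n\mathbb{Z})^\times \cap \bigcap \operatorname{Im}\bar N_\mathfrak{p}$ has index at most $2^r$ in $(\mathbb{Z}/\ell^n\mathbb{Z})^\times$, with $r\leq g$ the number of ramified primes.

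Assembling these bounds will give the theorem. For odd $\ell$, the upper bound follows from the identity $(1-1/\ell)(1+1/\ell)^a = (1-1/\ell^2)(1+1/\ell)^{a-1} \leq (1+1/\ell)^{a-1}$ combined with $r+a\leq g$, and the lower bound from $\kappa_\mathfrak{p}\geq 1-1/\ell$. For $\ell=2$ the same strategy applies with cruder estimates $\kappa_\mathfrak{p}\in[1/2,2]$ and $(3/2)^g\leq 2^g$, together with an additional factor $\leq 2^g$ from the Hensel obstruction in wildly ramified cases. The main obstacle will be making the Hensel lifting precise when $\ell=2$ ramifies wildly in $E$: this forces a careful local analysis of the trace map on principal units in wildly ramified quadratic extensions, and is ultimately responsible for the weaker bounds in the dyadic case.
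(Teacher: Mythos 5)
Your reorganisation of the problem is genuinely different from the paper's, and it is worth saying what the trade-offs are before flagging the gaps. The paper decomposes $\MT(A)(\mathbb{Z}/\ell^n\mathbb{Z})$ through the Hodge (norm-$1$) subtorus $\Hg(A)$: it counts $\Hg(A)(\mathbb{Z}/\ell^n\mathbb{Z})$ via the unit filtration $C(n)$ on each local norm-one group and then passes to $\MT(A)$ with the homothety map $\Psi(h,m)=m^{-1}h$, whose image it shows has index $1$ or $2$ (resp.\ $1$, $2$ or $4$ for $\ell=2$). You instead embed $\MT(A)(\mathbb{Z}/\ell^n\mathbb{Z})$ directly into $T_E(\mathbb{Z}/\ell^n\mathbb{Z})$ and try to identify it with the ``common-scalar norm'' locus $X_n$, counting $X_n$ by kernels and images of the local norms. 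The two viewpoints are dual (norm-one units vs.\ kernel/cokernel of the norm), and for odd $\ell$ your outline could be pushed to a full proof. But there are two real gaps.

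First, the claim that $\MT(A)(\mathbb{Z}/\ell^n\mathbb{Z})$ is ``a subgroup of bounded index in $X_n$'' is asserted, not proved: what you need is that the norm $N:T_E(1+\ell^n\mathbb{Z}_\ell)\to T_{E_0}(1+\ell^n\mathbb{Z}_\ell)$ is surjective (or has controlled cokernel), so that any $\bar x\in X_n$ can be corrected to a genuine lift in $\MT(A)(\mathbb{Z}_\ell)$. For $\ell$ odd every relevant quadratic extension is tame and the norm on principal units is surjective, so this is fixable, but it is the crux of the identification and must be carried out; the paper avoids it by working with the filtration and the explicit equation $x\tau(x)=1$ directly (with Hensel invoked only inside a single level of the filtration, where the relevant polynomial is quadratic and the discriminant is manifestly a square).

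Second, and more seriously, your estimates for $\ell=2$ are wrong in the wildly ramified case. You claim $\kappa_\mathfrak{p}\in[1/2,2]$, i.e.\ $\lvert\ker\bar N_\mathfrak{p}\rvert\le 2\cdot 2^{ne_\mathfrak{p}f_\mathfrak{p}}$. But when $E_\mathfrak{P}/(E_0)_\mathfrak{p}$ is wildly ramified, the graded pieces $C(k)/C(k+1)$ of the norm-one filtration can have order as large as $4^{f_\mathfrak{p}}$ for all $k\le e((E_0)_\mathfrak{p}/\mathbb{Q}_2)$ (this is exactly lemma~\ref{lemma_Level0} of the paper), so the kernel can exceed $2^{ne_\mathfrak{p}f_\mathfrak{p}}$ by a factor on the order of $2^{2e_\mathfrak{p}f_\mathfrak{p}}$, which can be as large as $4^g$ in total. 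Your hedge of ``an additional factor $\le 2^g$ from the Hensel obstruction'' is neither the right size nor the right source of the discrepancy. The paper's $2^{2g-1}\cdot 2^{(g+1)n}$ upper bound comes precisely from summing $f_i(e_i+1)\le 2g$ over the nonsplit places, together with the index at most $4$ of $\operatorname{Im}\Psi$ in $\MT(A)(\mathbb{Z}/2^n\mathbb{Z})$; without this more careful bookkeeping your $\ell=2$ bound does not close.
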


The proof of this result will occupy sections \ref{subsect_Nondegenerate} and \ref{subsect_Nondegenerate2}, while in sections \ref{subsect_EllipticCurves} and \ref{subsect_Surfaces} we discuss the special cases of elliptic curves and abelian surfaces.

\subsection{The natural filtration on the norm-1 torus}\label{subsect_Nondegenerate}

Let $\ell \neq 2$ be a rational prime, $L$ be a finite extension of $\mathbb{Q}_\ell$ and $\tau$ be an involution of $L$. Denote $L^\tau$ the fixed field of $\tau$, so that $L/L^\tau$ is a quadratic (Galois) extension. Fix a squarefree $d \in \mathcal{O}_{L^\tau}$ such that $L=L^\tau\left( \sqrt{d} \right)$ and consider the (multiplicative) group
\[
C=\left\{ x \in \mathcal{O}_{L}^\times \bigm\vert x \cdot \tau(x)=1 \right\}.
\]

We write $\lambda$ for a uniformizer of $L^\tau$, set $e=e \left(L^\tau/\mathbb{Q}_\ell \right)$, and consider $v_\ell$ and $v_\lambda$ as valuations on $\overline{\mathbb{Q}_\ell}$, normalized so as to have $v_\lambda(\lambda)=1$ and $v_\ell(\ell)=1$; in particular, $v_\lambda = e \cdot v_\ell$. We want to investigate the structure of the filtration of $C$ given by $C(n):=\left\{x \in C \bigm\vert v_\lambda(x-1) \geq n \right\}$. It is easy to see that every $x \in C(1)$ can be represented as
\[
x=1+2du \cdot \lambda^{2+2v} + 2 u_2 \cdot \lambda^{1+v} \sqrt{d}
\]
with $u, u_2 \in \mathcal{O}_{L^\tau}^\times$ and $v \in \mathbb{N}$ subject to the condition
\begin{equation}\label{eq_k12}
u (1 + d u \cdot \lambda^{2+2v}) = u_2^2.
\end{equation}

Furthermore, for $n \geq 1$ we have an exact sequence of abelian groups
\[
\begin{array}{ccccccc}
0 & \to & C(n+1) & \to & C(n) & \stackrel{\alpha}{\longrightarrow} & \mathcal{O}_L/(\lambda)\mathcal{O}_L,\\
  &     &        &     &  x  &\mapsto& \left[\displaystyle \frac{x-1}{2\lambda^n} \right]
\end{array}
\]
where $\left[ \cdot \right]$ denotes the class of an element of $\mathcal{O}_L$ in the quotient $\mathcal{O}_L/(\lambda)\mathcal{O}_L$. Let us describe the image of $\alpha$ for $n \geq 1$. Clearly $x \in C(n)$ implies $v \geq n-1$, and for $v \geq n$ we have $\alpha(x)=0$; when $v=n-1$ we have $\alpha(x)=[u_2\sqrt{d}]$. 
Notice now that we have an injection (of additive groups) $\frac{\mathcal{O}_{L^\tau}}{(\lambda)\mathcal{O}_{L^\tau}} \hookrightarrow \frac{\mathcal{O}_{L}}{(\lambda)\mathcal{O}_{L}}$ induced by $x  \mapsto  x\sqrt{d}$, and we claim that all points in the image of this embedding can be realized as $\alpha(x)$ for some $x \in C(n)$. This is clear for the zero element, so let us consider an element of the form $[u_2\sqrt{d}]$ with $u_2 \in \mathcal{O}_{L^\tau}^\times$.
Consider the equation 
\begin{equation}\label{eq_Findt}
t \left( 1+\lambda^{2n} d t \right) = u_2^2
\end{equation}
in the variable $t$. By Hensel's lemma, the discriminant $\Delta:=1+4u_2^2 \lambda^{2n} d$ is a square in $\mathcal{O}_{L^\tau}$ (recall that $n>0$). Let $1+z$ be the square root of $\Delta$ that is congruent to 1 modulo $\lambda$: then $z$ satisfies $(1+z)^2 = 1+4u_2^2 \lambda^{2n}d$, from which we easily find $v_\lambda(z)=2n+v_\lambda(d)$. It follows that
$
u: = \displaystyle \frac{-1+\sqrt{\Delta}}{2d \lambda^{2n}} = \frac{z}{2d \lambda^{2n}}
$
is a solution to equation \eqref{eq_Findt} which is also a $\lambda$-adic unit. We can then set $x=1+2du \cdot \lambda^{2n} + 2 u_2 \cdot \lambda^{n} \sqrt{d}$: by construction $x$ is an element of $C(n)$, and it satisfies $\alpha \left( x \right)=[u_2\sqrt{d}]$.
This shows that the image of $\alpha$ is in bijection with 
$\frac{\mathcal{O}_{L^\tau}}{(\lambda)\mathcal{O}_{L^\tau}}$. 
Finally, a very similar argument can be repeated when $\ell=2$, except that Hensel's lemma is now only applicable for $n>v_\lambda(2)$. We thus deduce the following lemma:

\begin{lemma}\label{lemma_Filtration}
Suppose $\ell \neq 2$. For every $n \geq 1$, the quotient $C(n)/C(n+1)$ has order $\left| \frac{\mathcal{O}_{L^\tau}}{(\lambda)\mathcal{O}_{L^\tau}} \right|$. For $\ell=2$ the same conclusion holds for every $n>v_\lambda(2)$.
\end{lemma}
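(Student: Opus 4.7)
The plan is to interpret the lemma as a statement about the image of the homomorphism $\alpha : C(n) \to \mathcal{O}_L/(\lambda)\mathcal{O}_L$ sending $x$ to the class of $(x-1)/(2\lambda^n)$. Since $\ker \alpha = C(n+1)$ by construction, the exact sequence already in hand gives $|C(n)/C(n+1)| = |\operatorname{Image} \alpha|$, so the entire task reduces to computing this image.

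First I would work out the upper bound. Starting from the normal form $x = 1 + 2du\lambda^{2+2v} + 2u_2\lambda^{1+v}\sqrt{d}$ for elements of $C(1)$, a direct inspection shows that $\alpha(x) = 0$ as soon as $v \geq n$, while $\alpha(x) = [u_2\sqrt{d}]$ when $v = n-1$. In either case the image lies in the copy of $\mathcal{O}_{L^\tau}/(\lambda)\mathcal{O}_{L^\tau}$ embedded in $\mathcal{O}_L/(\lambda)\mathcal{O}_L$ via multiplication by $\sqrt{d}$, which has exactly $|\mathcal{O}_{L^\tau}/(\lambda)\mathcal{O}_{L^\tau}|$ elements. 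The matching lower bound amounts to showing that every class $[u_2\sqrt{d}]$ with $u_2 \in \mathcal{O}_{L^\tau}^\times$ is actually attained. Fixing $v = n-1$, the norm-one constraint $u(1 + \lambda^{2+2v} du) = u_2^2$ specializes to a quadratic equation in the single variable $u \in \mathcal{O}_{L^\tau}$, with discriminant $\Delta = 1 + 4u_2^2 \lambda^{2n} d$. A Hensel-lifting argument would produce a square root $1 + z$ of $\Delta$ congruent to $1$ modulo $\lambda$ (with $v_\lambda(z) = 2n + v_\lambda(d)$), and then $u = z/(2d\lambda^{2n})$ is a $\lambda$-adic unit solving the equation, whence $x = 1 + 2du\lambda^{2n} + 2u_2\lambda^n\sqrt{d}$ is an element of $C(n)$ with $\alpha(x) = [u_2\sqrt{d}]$.

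The main obstacle is that the Hensel criterion for extracting square roots is sensitive to the residue characteristic. For $\ell$ odd one has $v_\lambda(2) = 0$, so the bound $v_\lambda(\Delta - 1) \geq 2n \geq 2 > 0$ is comfortably enough and the construction works for every $n \geq 1$. For $\ell = 2$, however, lifting a square root demands $v_\lambda(\Delta - 1) > 2 v_\lambda(2)$, i.e. $2n > 2 v_\lambda(2)$, which forces the strict inequality $n > v_\lambda(2)$; this is exactly the restriction appearing in the second half of the lemma, and it is the delicate technical point requiring care. Once the two regimes are separated by hypothesis, the remainder of the argument is the routine computation sketched above.
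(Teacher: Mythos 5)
Your argument for $\ell \neq 2$ follows the paper's proof essentially verbatim: the same normal form for elements of $C(1)$, the same exact sequence $0 \to C(n+1) \to C(n) \xrightarrow{\alpha} \mathcal{O}_L/(\lambda)\mathcal{O}_L$, the same quadratic equation $t(1+\lambda^{2n}dt)=u_2^2$, and the same Hensel step applied to its discriminant $\Delta = 1 + 4u_2^2\lambda^{2n}d$. That part is fine.

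Your explanation of where the restriction $n > v_\lambda(2)$ comes from at $\ell = 2$ is, however, mistaken. You assert that the Hensel criterion $v_\lambda(\Delta - 1) > 2v_\lambda(2)$ translates to $2n > 2v_\lambda(2)$. But $\Delta - 1 = 4u_2^2\lambda^{2n}d$, so $v_\lambda(\Delta - 1) = 2v_\lambda(2) + 2n + v_\lambda(d)$; the inequality $v_\lambda(\Delta - 1) > 2v_\lambda(2)$ is therefore automatic for every $n \geq 1$, and the Hensel criterion on this particular $\Delta$ imposes no restriction on $n$ whatsoever. The genuine obstruction at $\ell = 2$ lies one step earlier: the normal form $x = 1 + 2du\lambda^{2+2v} + 2u_2\lambda^{1+v}\sqrt{d}$ does not describe all of $C(1)$ when $\ell = 2$, because $\mathcal{O}_L$ need not equal $\mathcal{O}_{L^\tau}[\sqrt{d}]$, and the derivation of that form uses that $2$ is a unit (one reads off $v_\lambda(a+1)=0$ from $v_\lambda(a-1)>0$, which fails when $\ell = 2$). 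A correct $\ell=2$ argument works with a general $\mathcal{O}_{L^\tau}$-basis $(1,\omega)$ of $\mathcal{O}_L$ (with $\omega^2 - c\omega + d = 0$), writes $x = 1 + a\lambda^n + b\lambda^n\omega$, and reduces to a quadratic in $a$ whose discriminant is $4(1 - b^2\lambda^{2n}d)$ in the case $c=0$; it is the Hensel criterion on $1 - b^2\lambda^{2n}d$ — namely $2v_\lambda(b) + 2n + v_\lambda(d) > 2v_\lambda(2)$, with $b$ and $d$ possibly units — that actually yields $n > v_\lambda(2)$. Your constraint lands in the right place, but the mechanism you describe is not the one that produces it.
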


The quotients $C(n)/C(n+1)$ for small values of $n$ are described by the following lemma:
\begin{lemma}\label{lemma_Level0}
Let $f$ be the inertia degree of $L^\tau$ over $\mathbb{Q}_\ell$. Suppose first $\ell \neq 2$: then the quotient $\frac{C(0)}{C(1)}$ has order either $2\ell^{f}$ or $\ell^{f}+1$, with the first (resp. second) case happening exactly when $L/L^\tau$ is ramified (resp. unramified). Suppose on the other hand that $\ell=2$ and $n \leq v_\lambda(2)$: then the quotient $\frac{C(n)}{C(n+1)}$ has order at most $4^{f}$.
\end{lemma}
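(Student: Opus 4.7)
My plan is to reduce modulo an appropriate uniformizer of $L$ and analyze the image and kernel of $C$ under reduction in each of the three cases separately. Throughout, the key structural fact is that $\mathcal{O}_L$ is free of rank $2$ over $\mathcal{O}_{L^\tau}$, so $\mathcal{O}_L/\lambda\mathcal{O}_L$ is a $2$-dimensional $\mathbb{F}_{\ell^f}$-vector space of order $\ell^{2f}$, independently of whether $L/L^\tau$ is ramified. The outer filtration step $C(n)/C(n+1)$ then embeds into $(1+\lambda^n\mathcal{O}_L)/(1+\lambda^{n+1}\mathcal{O}_L)\cong \mathcal{O}_L/\lambda\mathcal{O}_L$, a universal upper bound which will be refined to an equality in the odd cases.

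For $\ell\neq 2$ unramified, $\lambda$ is already a uniformizer of $L$ and the residue field of $L$ is $\mathbb{F}_{\ell^{2f}}$, on which $\tau$ acts as Frobenius. Reduction modulo $\lambda$ sends $C$ into the kernel of the norm $N\colon \mathbb{F}_{\ell^{2f}}^\times\to\mathbb{F}_{\ell^f}^\times$, a cyclic group of order $\ell^f+1$, and the kernel of reduction on $C$ is by definition $C(1)$. Surjectivity onto that norm kernel is obtained by lifting each element via Hensel's lemma to a root of unity $\zeta\in\mathcal{O}_L$ of order dividing $\ell^f+1$ (permitted because the order is coprime to $\ell$); injectivity of reduction on prime-to-$\ell$ roots of unity then forces $\tau(\zeta)=\zeta^{\ell^f}$ and hence $\zeta\tau(\zeta)=1$, so $\zeta\in C$. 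This yields $|C(0)/C(1)|=\ell^f+1$.

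For $\ell\neq 2$ ramified, I would take $\pi:=\sqrt{d}$ as a uniformizer of $L$ (so $\pi^2=d$, $v_\lambda(d)=1$, and $\tau(\pi)=-\pi$) and interpose the subgroup $C':=C\cap(1+\pi\mathcal{O}_L)$ between $C(0)$ and $C(1)$, which is natural because $C(1)=C\cap(1+\pi^2\mathcal{O}_L)$. Reduction modulo $\pi$ lands in $\mathbb{F}_{\ell^f}^\times$, and as $\tau$ is trivial on the residue field the norm reduces to squaring, forcing the image of $C$ to be $\{\pm 1\}$; both signs lie in $C$, so $|C/C'|=2$. For the second quotient, writing $x=1+\pi y\in C'$, the relation $x\tau(x)=1$ becomes $y-\tau(y)=\pi y\tau(y)$; decomposing $y=a+b\pi$ with $a,b\in\mathcal{O}_{L^\tau}$ this collapses to the single scalar equation $b^2d+2b-a^2=0$. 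For any prescribed $\bar a\in\mathbb{F}_{\ell^f}$ the quadratic formula, combined with Hensel's lemma applied to $\sqrt{1+a^2d}$ (which requires $\ell$ odd to invert $2$), produces $b\in\mathcal{O}_{L^\tau}$. This shows the injection $C'/C(1)\hookrightarrow \mathcal{O}_L/\pi\mathcal{O}_L\cong\mathbb{F}_{\ell^f}$, $1+\pi y\mapsto \bar a$, is surjective, giving $|C'/C(1)|=\ell^f$ and altogether $|C(0)/C(1)|=2\ell^f$.

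Finally, for $\ell=2$ and $n\leq v_\lambda(2)$, the denominators of $2$ obstruct the Hensel steps used above, so I would simply fall back on the universal upper bound from the first paragraph: $|C(n)/C(n+1)|\leq |\mathcal{O}_L/\lambda\mathcal{O}_L|=4^f$. The main delicate point I foresee is the surjectivity step in the ramified odd case, where the explicit parametrization $y=a+b\pi$ and the careful verification that the Hensel root of $1+a^2d$ furnishes a $b\in\mathcal{O}_{L^\tau}$ (in particular that $v_\lambda(b)\geq 0$) must be carried out without any slack; everywhere else the arguments are either a direct counting of a cyclic group of roots of unity or a cheap bounding by the size of the residue ring.
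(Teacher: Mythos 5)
Your proof is correct and follows essentially the same strategy as the paper: reduce modulo a suitable uniformizer, identify the target of the reduction map (kernel of the residue norm in the unramified case, a two--element quotient in the ramified case), and prove surjectivity via Hensel lifting (Teichm\"uller lifts for the roots of unity, a Hensel square root for the quadratic in the ramified case), with the $\ell=2$ bound obtained from the trivial injection $C(n)/C(n+1)\hookrightarrow \mathcal{O}_L/\lambda\mathcal{O}_L$. The only organizational difference is in the ramified $\ell\neq 2$ case: you interpose $C'=C\cap(1+\pi\mathcal{O}_L)$ and split the count as $|C/C'|\cdot|C'/C(1)|=2\cdot\ell^f$, whereas the paper reduces directly modulo $\lambda=\pi^2$ and identifies the image inside $(\mathcal{O}_L/\lambda\mathcal{O}_L)^\times$ as the set $\{[\pm 1+b\sqrt d]\}$; these are the same computation packaged differently, and your two-step filtration is arguably slightly cleaner since it avoids describing an ad hoc subset of a non-field quotient ring.
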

Before giving a proof, recall the following
\begin{definition}
Let $L$ be a finite extension of $\mathbb{Q}_\ell$ with ring of integers $\mathcal{O}_L$ and residue field $\mathbb{F}$. Let $\pi:\mathcal{O}_L \to \mathbb{F}$ be the canonical projection. The \textbf{Teichm\"uller lift} is the unique group homomorphism $\omega:\mathbb{F}^\times \to \mathcal{O}_L^\times$ such that, for all $y \in \mathbb{F}^\times$, the element $\omega(y) \in \mathcal{O}_L^\times$ is the unique solution to the equation $x^{|\mathbb{F}|-1}=1$ satisfying $\pi(x)=y$.
\end{definition}

\begin{proof}
Consider first the case of $L/L^\tau$ being unramified (and $\ell \neq 2$). 
Let $\pi : \mathcal{O}_{L} \to \mathbb{F}:=\frac{\mathcal{O}_L}{(\lambda)\mathcal{O}_L}$ be the canonical projection, and observe that $\mathbb{F}$ has order $\ell^{2f}$. It is clear that $\pi$ restricts to a map $C(0) \to \mathbb{F}^\times$, and on the other hand $x \in C(0)$ maps to 1 if and only if $v_{\lambda}(x-1)>0$, i.e. if and only if $x \in C(1)$: this implies that $C(0)/C(1)$ injects into $\mathbb{F}^\times$. The involution $\tau$ induces on $\mathbb{F}$ an automorphism $\tau_{\mathbb{F}}$, which is necessarily the unique nontrivial involution $x \mapsto x^{\ell^{f}}$.
Let now $x \in C(0)$. By definition we have $x \cdot \tau(x)=1$, hence
\[
1=\pi(x) \cdot \pi(\tau(x))=\pi(x) \cdot \tau_{\mathbb{F}}(\pi(x))=\pi(x)^{\ell^{f}+1},
\] so $C(0)/C(1)$ injects into the subgroup $H$ of $\mathbb{F}^\times$ consisting of the roots of unity of order dividing $\ell^{f}+1$. The group $H$ is of order $\ell^{f}+1$, and it is not hard to see that $C(0)/C(1)$ surjects onto it: indeed for every $h \in H$ we have $\omega(h) \in C(0)$, and by definition $\pi(\omega(h))=h$. 
Suppose on the other hand that $L/L^\tau$ is ramified, so that $L=L^\tau(\sqrt{d})$ with $v_\lambda(d)=1$. Again we see that $C(0)/C(1)$ injects into $\mathbb{F}^\times:=\left(\frac{\mathcal{O}_L}{(\lambda)\mathcal{O}_L}\right)^\times$ (which however is not a field anymore), and the involution $\tau$ acts on an element $[a+b\sqrt{d}] \in \left(\frac{\mathcal{O}_L}{(\lambda)\mathcal{O}_L}\right)^\times$, with $a,b \in \mathcal{O}_{L^\tau}$, by sending it to $[a-b\sqrt{d}]$. Writing $\pi(x)=[a+b\sqrt{d}]$, the equation $x\tau(x)=1$ implies $[a^2-db^2]=1$, which in turn, since $v_\lambda(d)=1$, means $[a^2]=1$ and $[a] = \pm 1$. This shows that $C(0)/C(1)$ injects into $\{\pm 1\} \times \frac{\mathcal{O}_{L^\tau}}{(\lambda)\mathcal{O}_{L^\tau}}$, a set of order $2 \cdot \ell^f$. On the other hand, for any value of $[\pm 1 +b\sqrt{d}] \in \mathbb{F}^\times$, the equation $a^2=1+db^2$ (with fixed $b$, in the variable $a$) admits solutions in $\mathcal{O}_{L^\tau}$ by Hensel's lemma;
the elements $\pm a+b\sqrt{d} \in C(0)$ then satisfy
\[
(\pm a+b\sqrt{d}) \cdot \tau(\pm a+b\sqrt{d})=a^2-db^2 = 1,
\]
and on the other hand $\pi(\pm a+b\sqrt{d}) = [\pm 1 + b\sqrt{d}]$, so $C(0)/C(1)$ actually projects surjectively on $\{\pm 1\} \times \frac{\mathcal{O}_{L^\tau}}{(\lambda)\mathcal{O}_{L^\tau}}$; this shows that $|C(0)/C(1)|=2\ell^f$ as claimed.
The upper bound for $\ell=2$ likewise follows from the fact that for any $n \geq 0$ the quotient $C(n)/C(n+1)$ injects into $\frac{\mathcal{O}_L}{(\lambda)\mathcal{O}_L}$.
\end{proof}

\subsection{The order of $\MT(A)(\mathbb{Z}/\ell^n\mathbb{Z})$}\label{subsect_Nondegenerate2}
Let $E$ be a CM field of degree $2g$ over $\mathbb{Q}$ and $T_E$ be the associated algebraic torus, and let $\tau$ denote complex conjugation on $E$. If $A/K$ is an abelian variety with complex multiplication by the nondegenerate CM type $(E,S)$, it is known that we have
\[
\MT(A)(B) = \left\{ x \in (E \otimes_\mathbb{Q} B)^\times \bigm\vert x \tau(x) \in B^\times \right\} \quad \forall \; \mathbb{Q}\text{-algebra } B.
\]

We can also consider the `norm 1' (or Hodge) subtorus of $\MT(A)$ given as a functor by
\[
\Hg(A)(B) = \left\{ x \in (E \otimes_\mathbb{Q} B)^\times \bigm\vert x \tau(x) =1 \right\} \quad \forall \; \mathbb{Q}\text{-algebra } B.
\]

We aim to give bounds on the number of $\frac{\mathbb{Z}}{\ell^n\mathbb{Z}}$-points of $\MT(A)$, but it is easier to first consider $\Hg(A)$. If we write $E \otimes \mathbb{Q}_\ell \cong \prod_{i=1}^s F_i$ (a product of fields), we have
\[
\Hg(A)(\mathbb{Q}_\ell) = \left\{ x=(x_1,\ldots,x_s) \in \prod_{i=1}^s F_i^\times \bigm\vert x \tau(x) =1 \right\}.
\]

We can renumber the $F_i$'s in such a way that $\tau$ acts by exchanging $F_{2i-1}$ and $F_{2i}$ for $i=1,\ldots,r$ and it acts as an involution on $F_i$ for $i=2r+1,\ldots,s$. With this convention, a point $(x_1,\ldots,x_{2r},x_{2r+1}, \ldots,x_s) \in \prod_{i=1}^s F_i^\times$ is in $\Hg(A)(\mathbb{Q}_\ell)$ if and only if $x_{2i-1}x_{2i}=1$ for $i=1,\ldots,r$ and $x_i\tau(x_i)=1$ for $i=2r+1,\ldots,s$, that is, 
\begin{equation}\label{eq_Hg}
\Hg(A)(\mathbb{Q}_\ell) \cong \prod_{i=1}^r \left\{ x_{2i-1} \in F_{2i-1}^\times \right\} \times \prod_{i=2r+1}^s \left\{ x_i \in F_i^\times \bigm\vert x_i \tau(x_i) =1 \right\}.
\end{equation}

The character groups of $\MT(A)_{\mathbb{Q}_\ell}$ and of $\Hg(A)_{\mathbb{Q}_\ell}$ are quotients of $\widehat{T_{E,\mathbb{Q}_\ell}}$, which in turn is generated by elements of the form $(\chi_1,\ldots,\chi_s)$, where $\chi_i$ ranges over the embeddings of $F_i$ in $\overline{\mathbb{Q}_\ell}$. It follows that a point $x \in \Hg(A)(\mathbb{Q}_\ell)$ is in $\Hg(A)(\mathbb{Z}_\ell)$ if and only if for any choice of embeddings $\chi_i:F_i \hookrightarrow \overline{\mathbb{Q}_\ell}$ we have $\chi_i(x_i) \in \mathcal{O}_{\chi_i(F_i)}$; as the property of being $\ell$-integral is Galois-invariant we deduce that a necessary and sufficient condition is $x_i \in \mathcal{O}_{F_i}^\times$. Hence we find $\displaystyle \Hg(A)(\mathbb{Z}_\ell) \cong \prod_{i=1}^r \mathcal{O}_{F_{2i-1}}^\times \times \prod_{i=2r+1}^s \left\{ x_i \in \mathcal{O}_{F_i}^\times \bigm\vert x_i \tau(x_i) =1 \right\}$, and a perfectly analogous argument shows that
\[
\begin{aligned}
\Hg(A)(1+\ell^n\mathbb{Z}_\ell) \cong \prod_{i=1}^r & \left\{ x_{2i-1} \in \mathcal{O}_{F_{2i-1}}^\times \bigm\vert v_\ell(x_{2i-1}-1) \geq n \right\} \times \\ & \times \prod_{i=2r+1}^s \left\{ x_i \in \mathcal{O}_{F_i}^\times \bigm\vert v_\ell(x_{i}-1) \geq n, \; x_i \tau(x_i) =1 \right\}.
\end{aligned}
\]

Write $e_i$ and $f_i$ for the ramification index and inertia degree of $F_i^\tau$ over $\mathbb{Q}_\ell$, and $\lambda_i$ for a uniformizer of $F_i^\tau$ ($i=2r+1,\ldots,s$). The order of $\left|\frac{\Hg(A)(\mathbb{Z}_\ell)}{\Hg(A)(1+\ell^n\mathbb{Z}_\ell)}\right|$ is then given by
\begin{equation}\label{eq_OrderHodge}
\displaystyle \left|\Hg(A)(\mathbb{Z}/\ell^n\mathbb{Z})\right| = \prod_{i=1}^r \left|\frac{\mathcal{O}_{F_{2i-1}}^\times}{1+\ell^n \mathcal{O}_{F_{2i-1}}} \right| \; \times \; \prod_{i=2r+1}^s \left|\frac{C^{(i)}(0)}{C^{(i)}(ne_i)}\right|,\end{equation}
where
\[
C^{(i)}(k)=\left\{ x_i \in \mathcal{O}_{F_i}^\times \bigm\vert v_{\lambda_i}(x_{i}-1) \geq k, \; x_i \cdot \tau(x_i) =1 \right\}
\]
is the filtration we studied in the previous section for the field $F_i$ and the involution $\tau|_{F_i}$. For $i=1,\ldots,r$ let furthermore $\pi_{i}$ (resp. $e_i, f_i$) be a uniformizer (resp. the ramification index over $\mathbb{Q}_\ell$, the inertia degree over $\mathbb{Q}_\ell$) of $F_{2i-1}$. We now compute the order of $\Hg(A)(\mathbb{Z}/\ell^n \mathbb{Z})$. Basic properties of local fields show that for $i=1,\ldots,r$ the quotient $\left|\frac{\mathcal{O}_{F_{2i-1}}^\times}{1+\ell^n\mathcal{O}_{F_{2i-1}}}\right|$ has order
\[
\left|\frac{\mathcal{O}_{F_{2i-1}}^\times}{1+\pi_{i}\mathcal{O}_{F_{2i-1}}}\right| \cdot \prod_{j=1}^{ne_i-1} \left| \frac{1+(\pi_{i})^{j} \mathcal{O}_{F_{2i-1}}}{1+(\pi_{i})^{j+1} \mathcal{O}_{F_{2i-1}}} \right| = \left(\ell^{f_i}-1\right) \cdot \ell^{f_i(ne_i-1)},
\]
while (for $\ell \neq 2$) lemma \ref{lemma_Filtration} gives
\[
\left|\frac{C^{(i)}(0)}{C^{(i)}(ne_i)}\right| =  \left|\frac{C^{(i)}(0)}{C^{(i)}(1)}\right| \cdot \left|\frac{C^{(i)}(1)}{C^{(i)}(ne_i)}\right| =  \left|\frac{C^{(i)}(0)}{C^{(i)}(1)}\right| \cdot \ell^{f_i(ne_i-1)}.
\]
Now notice that $s-2r$ does not exceed $g$: indeed $[F_i:F_i^\tau]=2$ for every $i=2r+1, \ldots, r$, hence $\displaystyle
2g = \left[ E \otimes \mathbb{Q}_\ell : \mathbb{Q}_\ell \right] \geq \sum_{i=2r+1}^s \left[ F_i:\mathbb{Q}_\ell \right] \geq 2(s-2r)$
as claimed. Applying lemma \ref{lemma_Level0} we then deduce that the order of $\Hg(A)(\mathbb{Z}/\ell^n \mathbb{Z})$ is at most
\[
\begin{aligned}
\prod_{i=1}^r \ell^{nf_ie_i} \cdot & \prod_{i=2r+1}^s  2\left(1+\ell^{f_i}\right) \left(\ell^{f_i}\right)^{ne_i-1}  \\ & =2^{s-2r} \prod_{i=1}^{2r} \ell^{\frac{1}{2}n [F_{i}:\mathbb{Q}_\ell]}  \prod_{i=2r+1}^s \left(1+\ell^{-f_i}\right) \left(\ell^{f_i}\right)^{ne_i} \\
& \leq 2^{s-2r} \left(1+1/\ell\right)^{s-2r} \prod_{i=1}^{s} \ell^{\frac{1}{2}n [F_{i}:\mathbb{Q}_\ell]} \\ & \leq 2^{g} \left(1+1/\ell\right)^{g} \ell^{gn},
\end{aligned}
\]
and at least
\[
\begin{aligned}
\prod_{i=1}^r \left(\ell^{f_i}-1\right) \ell^{(ne_i-1)f_i} & \prod_{i=2r+1}^s (\ell^{f_i}+1)\left(\ell^{f_i}\right)^{ne_i-1} \\ & \geq (1-1/\ell)^r \cdot \prod_{i=1}^r \ell^{nf_ie_i} \prod_{i=2r+1}^s \ell^{nf_ie_i}
\\ & = (1-1/\ell)^r \cdot \prod_{i=1}^{2r} \ell^{\frac{1}{2}n [F_{i}:\mathbb{Q}_\ell]} \times  \prod_{i=2r+1}^{s} \ell^{\frac{1}{2}n [F_{i}:\mathbb{Q}_\ell]} \\
& \geq (1-1/\ell)^g \cdot \ell^{gn};
\end{aligned}
\]
moreover, if for at least one index $i\in\{2r+1,\ldots,s\}$ the extension $F_i/F_i^\tau$ is ramified, then we see from lemma \ref{lemma_Level0} that the lower bound can be improved to
\begin{equation}\label{eq_ImprovedLowerBound}
\left|\Hg(A)(\mathbb{Z}/\ell^n \mathbb{Z})\right| \geq 2(1-1/\ell)^g \cdot \ell^{gn}.
\end{equation}
To finish the proof of theorem \ref{thm_Nondegenerate} we shall use the following result:
\begin{lemma}\label{lemma_ImPsi}
Consider the map
\[
\begin{array}{cccc}
\Psi: & \Hg(A)(\mathbb{Z}/\ell^n\mathbb{Z}) \times \left( \mathbb{Z}/\ell^n \mathbb{Z} \right)^\times & \to & \MT(A)(\mathbb{Z}/\ell^n\mathbb{Z})
\\
& (h, m) & \mapsto & m^{-1}h.
\end{array}
\]
If $\ell \neq 2$, the group $\operatorname{Im} \Psi$ has order equal to $\frac{1}{2} \left|\Hg(A)(\mathbb{Z}/\ell^n\mathbb{Z}) \right| \times (1-1/\ell)\ell^{n}$ and has index at most 2 in $\MT(A)(\mathbb{Z}/\ell^n\mathbb{Z})$. Moreover, $\Psi$ is surjective if and only if for all $x \in \MT(A)(\mathbb{Z}/\ell^n\mathbb{Z})$ the number $x \tau(x)$ is a square in $\left(\mathbb{Z}/\ell^n\mathbb{Z} \right)^\times$. On the other hand, for $\ell=2$ we have
\begin{itemize}
\item for $n=1$, the group $\operatorname{Im} \Psi$ has order equal to that of $\left|\Hg(A)(\mathbb{Z}/2\mathbb{Z}) \right|$ and $\Psi$ is surjective;
\item for $n=2$, the group $\operatorname{Im} \Psi$ has order equal to that of $\left|\Hg(A)(\mathbb{Z}/4\mathbb{Z}) \right|$ and $\operatorname{Im} \Psi$ has index either 1 or $2$ in $\MT(A)(\mathbb{Z}/4\mathbb{Z})$;
\item for $n \geq 3$, the group $\operatorname{Im} \Psi$ has order equal to $2^{n-3} \cdot \left|\Hg(A)(\mathbb{Z}/2^n\mathbb{Z}) \right|$ and $\operatorname{Im} \Psi$ has index $1$, $2$ or $4$ in $\MT(A)(\mathbb{Z}/2^n\mathbb{Z})$;
\end{itemize}
\end{lemma}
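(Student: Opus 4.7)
The plan is to analyze $\Psi$ in two stages: compute $|\ker\Psi|$ directly (which yields $|\operatorname{Im}\Psi|$), and then identify $\operatorname{Im}\Psi$ with the preimage of the squares under a suitable ``norm'' homomorphism, so that the index of $\operatorname{Im}\Psi$ in $\MT(A)(\mathbb{Z}/\ell^n\mathbb{Z})$ becomes a purely group-theoretic invariant of $(\mathbb{Z}/\ell^n\mathbb{Z})^\times$.

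For the kernel, I observe that $\tau$ is a $\mathbb{Q}$-algebra automorphism of $E$ and hence fixes the diagonally embedded copy of $(\mathbb{Z}/\ell^n\mathbb{Z})^\times$ inside $(E\otimes\mathbb{Z}/\ell^n\mathbb{Z})^\times$ pointwise. Consequently $(h,m) \in \ker\Psi$ forces $h = m$ and the relation $h\tau(h) = 1$ collapses to $m^2 = 1$, so $\ker\Psi$ is canonically identified with the $2$-torsion of $(\mathbb{Z}/\ell^n\mathbb{Z})^\times$. This $2$-torsion has order $2$ when $\ell$ is odd, and order $1$, $2$, $4$ for $(\ell,n) = (2,1), (2,2), (2,\geq 3)$ respectively (using $(\mathbb{Z}/2^n\mathbb{Z})^\times \cong \mathbb{Z}/2\times \mathbb{Z}/2^{n-2}$ in the last case). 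Combining with the identity $|\operatorname{Im}\Psi| = |\Hg(A)(\mathbb{Z}/\ell^n\mathbb{Z})|\cdot \varphi(\ell^n)/|\ker\Psi|$ then delivers each of the stated orders.

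Next, I consider the homomorphism $N : \MT(A)(\mathbb{Z}/\ell^n\mathbb{Z}) \to (\mathbb{Z}/\ell^n\mathbb{Z})^\times$ sending $x \mapsto x\tau(x)$; it is well-defined by the very definition of $\MT(A)$, and multiplicative because $\tau$ is. I claim that $\operatorname{Im}\Psi = N^{-1}\bigl(((\mathbb{Z}/\ell^n\mathbb{Z})^\times)^2\bigr)$. Indeed, writing $x = m^{-1}h$ with $h \in \Hg(A)$ gives $N(x) = m^{-2}\cdot h\tau(h) = m^{-2}$, which is a square; conversely, if $N(x) = s^2$, setting $h := s^{-1}x$ and $m := s^{-1}$ yields $h\tau(h) = s^{-2}N(x) = 1$ and $\Psi(h,m) = x$. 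Hence $[\MT(A)(\mathbb{Z}/\ell^n\mathbb{Z}) : \operatorname{Im}\Psi]$ divides the index of the squares in $(\mathbb{Z}/\ell^n\mathbb{Z})^\times$, and in particular $\Psi$ is surjective if and only if every value taken by $N$ lies in the squares, which is exactly the claimed surjectivity criterion for odd $\ell$.

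To close, the square-index in $(\mathbb{Z}/\ell^n\mathbb{Z})^\times$ equals $2$ for odd $\ell$ (cyclic group of even order), and respectively $1$, $2$, $4$ for $(\ell,n) = (2,1), (2,2), (2,\geq 3)$. This yields all the claimed upper bounds on the index of $\operatorname{Im}\Psi$ and hence completes the proof. The argument is essentially formal; the only real issue requiring attention is the $2$-adic case, where one must be careful in enumerating the $2$-torsion and the subgroup of squares in $(\mathbb{Z}/2^n\mathbb{Z})^\times$ for small $n$.
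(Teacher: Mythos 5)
Your proof is correct and follows essentially the same route as the paper's: compute $\ker\Psi$ as the $2$-torsion of the diagonal $(\mathbb{Z}/\ell^n\mathbb{Z})^\times$, then control the index of $\operatorname{Im}\Psi$ by relating it to the index of squares in $(\mathbb{Z}/\ell^n\mathbb{Z})^\times$. The only cosmetic difference is that you package the latter step cleanly as the identity $\operatorname{Im}\Psi = N^{-1}\bigl(((\mathbb{Z}/\ell^n\mathbb{Z})^\times)^2\bigr)$ for the norm $N(x)=x\tau(x)$, whereas the paper argues in terms of cosets, but the underlying idea is the same.
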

\begin{proof}
Let us start with the case $\ell \neq 2$. The kernel of $\psi$ is given by the intersection of $\Hg(A)(\mathbb{Z}/\ell^n\mathbb{Z})$ and $\left( \mathbb{Z}/\ell^n \mathbb{Z} \right)^\times$ inside $\MT(A)(\mathbb{Z}/\ell^n\mathbb{Z})$, namely 
\[
\left\{ h \in \left(\mathbb{Z}/\ell^n \mathbb{Z}\right)^\times \bigm\vert h \tau(h)=1 \right\} = \left\{ h \in \left(\mathbb{Z}/\ell^n \mathbb{Z}\right)^\times \bigm\vert h ^2=1 \right\}=\left\{ \pm 1 \right\},
\]
so $\operatorname{Im} \Psi$ has order
\[
\frac{1}{\left| \operatorname{ker} \Psi \right|} \cdot |\Hg(A)(\mathbb{Z}/\ell^n\mathbb{Z})| \cdot \left| \left( \mathbb{Z}/\ell^n\mathbb{Z} \right)^\times \right| = \frac{(\ell-1)\ell^{n-1}}{2} \cdot |\Hg(A)(\mathbb{Z}/\ell^n\mathbb{Z})|
\]
as claimed.

As for the index of $\operatorname{Im} \Psi$, notice first that for every $x=m^{-1}h \in \operatorname{Im} \Psi$ we have that $x\cdot \tau(x)=m^{-2}$ is a square in $\left(\mathbb{Z}/\ell^n\mathbb{Z}\right)^\times$, so if $\Psi$ is surjective we necessarily have $x \cdot \tau(x) \in (\mathbb{Z}/\ell^n\mathbb{Z})^{\times 2}$ for every $x \in \MT(A)(\mathbb{Z}/\ell^n\mathbb{Z})$. Conversely, suppose that for every $x$ in $\MT(A)(\mathbb{Z}/\ell^n\mathbb{Z})$ the number $x \tau(x)$ is a square in $\left(\mathbb{Z}/\ell^n \mathbb{Z}\right)^\times$, say $x \tau(x)=\mu(x)^2$ with $\mu(x) \in \left(\mathbb{Z}/\ell^n\mathbb{Z} \right)^\times$. Then every $x$ can be written as $x = \mu(x) \cdot \frac{x}{\mu(x)}$, and since $\frac{x}{\mu(x)}$ is in $\Hg(A)(\mathbb{Z}/\ell^n \mathbb{Z})$ this shows that $x$ belongs to $\operatorname{Im} \Psi$, which is therefore surjective.

Finally, if there is a $y \in \MT(A)(\mathbb{Z}/\ell^n\mathbb{Z})$ such that $y \tau(y)$ is not a square in $\left(\mathbb{Z}/\ell^n\mathbb{Z}\right)^\times$, then using the fact that $\left(\mathbb{Z}/\ell^n\mathbb{Z}\right)^{\times 2}$ is of index 2 in $\left(\mathbb{Z}/\ell^n\mathbb{Z}\right)^{\times}$ we easily see that for every $x \in \MT(A)(\mathbb{Z}/\ell^n\mathbb{Z})$ either $x$ or $xy$ belongs to $\operatorname{Im} \Psi$, thus proving the remaining claim.
The conclusion for $\ell=2$ follows by the same argument upon noticing that $\frac{\left(\mathbb{Z}/2^n\mathbb{Z} \right)^\times}{\left(\mathbb{Z}/2^n\mathbb{Z} \right)^{\times 2}}$ has order $1, 2$, or $4$, according to whether $n$ is 1, 2, or at least 3.
\end{proof}

Combining this last lemma with our previous estimates gives the desired upper bound
\[
\begin{aligned}
\left|\MT(A)(\mathbb{Z}/\ell^n\mathbb{Z}) \right| \leq 2 \left| \operatorname{Im} \Psi \right| & = \left|\Hg(A)(\mathbb{Z}/\ell^n\mathbb{Z})\right| \times \left| \left( \mathbb{Z}/\ell^n \mathbb{Z} \right)^\times \right| \\ & \leq 2^{g} \left(1+1/\ell \right)^{g-1}\ell^{(g+1)n}.
\end{aligned}
\]
As for the lower bound, suppose first that for at least one index $i$ in the set $\{2r+1,\ldots,s \}$ the extension $L_i/L_i^\tau$ is ramified: then using the lower bound of equation \eqref{eq_ImprovedLowerBound} (which is conditional on this hypothesis) we find
\[
\left|\MT(A)(\mathbb{Z}/\ell^n\mathbb{Z}) \right| \geq \frac{1}{2} \left|\Hg(A)(\mathbb{Z}/\ell^n\mathbb{Z})\right| \times \left| \left( \mathbb{Z}/\ell^n \mathbb{Z} \right)^\times \right| \geq (1-1/\ell)^{g+1}\ell^{(g+1)n}.
\]
Suppose on the other hand that $L_i/L_i^\tau$ is unramified for every $i=2r+1,\ldots,s$: then we claim that map $\Psi$ from lemma \ref{lemma_ImPsi} is not surjective. Assuming this is the case, we have
\[
\left|\MT(A)(\mathbb{Z}/\ell^n\mathbb{Z}) \right| \geq 2 \times \frac12 \times \left|\Hg(A)(\mathbb{Z}/\ell^n\mathbb{Z})\right| \times \left| \left( \mathbb{Z}/\ell^n \mathbb{Z} \right)^\times \right| \geq (1-1/\ell)^{g+1}\ell^{(g+1)n},
\]
which is what we want to show. We are thus reduced to proving that $\Psi$ is not surjective, or equivalently (by lemma \ref{lemma_ImPsi}), to showing that there is an $x \in \MT(A)(\mathbb{Z}/\ell^n\mathbb{Z})$ such that $x \tau(x)$ is not a square in $\left(\mathbb{Z}/\ell^n\mathbb{Z}\right)^\times$. By the same argument that leads to equations \eqref{eq_Hg} and \eqref{eq_OrderHodge}, we can represent elements of $\MT(A)(\mathbb{Z}_\ell)$ as tuples
\[
(x_1,\ldots,x_{2r}, x_{2r+1},\ldots,x_{s},m) \in \prod_{i=1}^{2r} \mathcal{O}_{F_{i}}^\times \times \prod_{j=2r+1}^s \mathcal{O}_{F_j}^\times \times \mathbb{Z}_\ell^\times,
\]
satisfying $
x_{2i-1}x_{2i}=m$ for $i=1,\ldots,r$ and $x_j\tau(x_j) =m$ for $j=2r+1,\ldots,s$. Now if $2r=s$ it is clear that $\operatorname{MT}(A)(\mathbb{Z}/\ell^n\mathbb{Z})$ contains elements $x$ such that $x \tau(x)$ is not a square (it suffices to choose $m \in \mathbb{Z}_\ell^\times$ which is not a square in $(\mathbb{Z}/\ell^n\mathbb{Z})^\times$ and set $x_{2i-1}=1, x_{2i}=m$ for $i=1,\ldots,r$), so we can assume $s>2r$.
For $j=2r+1,\ldots,s$ write $F_j=F_j^\tau(\sqrt{d_j})$ for some squarefree $d_j \in \mathcal{O}_{F_j}^\times$ (recall that we assume $F_j/F_j^\tau$ to be unramified), and likewise write $x_j=a_j+b_j\sqrt{d_j}$ for some $a_j, b_j \in \mathcal{O}_{F_j^\tau}$. We claim that since $F_j/F_j^\tau$ is unramified every element $m \in \mathbb{Z}_\ell^\times$ can be represented as $a_j^2-d_jb_j^2$ for some choice of $a_j, b_j \in \mathcal{O}_{F_j^\tau}$. To see this, notice that for fixed $m$ and $d_j$ the conic section $\mathcal{C} : \{a^2-d_jb^2=mc^2\}$ admits a point $(a_0,b_0,c_0)$ over the residue field of $F_j^\tau$; as $d_j$ is not a square in $F_j^\tau$ we cannot have $c_0=0$, and since $\mathcal{C}$ is smooth the point $(a_0,b_0,c_0)$ lifts to a point $(a,b,c) \in \mathcal{C}\left(\mathcal{O}_{F_j^\tau}\right)$, with $c$ a unit (since it does not reduce to 0 in the residue field). Dividing through by $c^2$ then yields
$
(a/c)^2-d_j(b/c)^2=m
$
as desired.
Pick now a fixed non-square $m \in \mathbb{Z}_\ell^\times$ and for each $j=2r+1,\ldots,s$ fix a representation $m=a_j^2-d_jb_j^2$. Take furthermore $x_{2i-1}=1, x_{2i}=m$ for $i=1,\ldots,r$. 

The corresponding point $x=((x_{i})_{i=1,\ldots,2r},(x_j)_{j=2r+1,\ldots,s},m)$ of $\MT(A)(\mathbb{Z}_\ell)$ has the property that $x \tau(x)=m$ is not a square in $\mathbb{Z}_\ell^\times$, and therefore  the image of $x$ in $\MT(A)(\mathbb{Z}/\ell^n\mathbb{Z})$ has again the property that $x \tau(x)=[m] \in \left(\mathbb{Z}/\ell^n\mathbb{Z}\right)^\times$ is not a square. Combined with lemma \ref{lemma_ImPsi}, this shows that $\Psi$ is not surjective in this case and concludes the proof of theorem \ref{thm_Nondegenerate} for $\ell \neq 2$.

Notice now that for $\ell=2$ the lower bound of theorem \ref{thm_Nondegenerate} is trivial for $n \leq 2$, so we can assume $n \geq 3$. We then remark that (by equation \eqref{eq_OrderHodge}) $\Hg(A)(\mathbb{Z}/2^n\mathbb{Z})$ has order at least
\[
\prod_{i=1}^r 2^{(n-1)[F_{2i-1}:\mathbb{Q}_\ell]} \; \times \; \prod_{i=2r+1}^s \left| \frac{C^{(i)}(e_i+1)}{C^{(i)}(ne_i)} \right|,
\]
which (by the same argument as above, using the second part of lemma \ref{lemma_Filtration}) in turn is at least
\[
\prod_{i=1}^r 2^{(n-1)[F_{2i-1}:\mathbb{Q}_\ell]} \; \times \; \prod_{i=2r+1}^s \left( 2^{f_i} \right)^{(n-1)e_i-1} \geq 2^{g(n-2)}.
\]
Furthermore, taking into account the factor coming from the homotheties -- namely $\left(\mathbb{Z}/2^n\mathbb{Z} \right)^\times$ -- we find $|\MT(A)(\mathbb{Z}/2^n\mathbb{Z})| \geq 2^{(g+1)(n-2)-1}$. Finally, the upper bound for $\ell=2$ follows trivially from the previous computations and from the second halves of lemmas \ref{lemma_Level0} and \ref{lemma_ImPsi}.

\subsection{Elliptic curves}\label{subsect_EllipticCurves}
When the CM abelian variety under consideration is an elliptic curve we can give a complete description of the full adelic Galois representation:

\begin{theorem}\label{thm_EC}
Let $A/K$ be an elliptic curve such that $\operatorname{End}_{\overline K}(A)$ is an order in an imaginary quadratic field $E$. Denote by $\displaystyle 
\rho_\infty : \abGal{K} \to \prod_{\ell} \operatorname{Aut} T_\ell A$ 
the natural adelic representation attached to $A$, and let $G_\infty$ be its image. For every prime $\ell$ denote by $C_\ell$ the group $\left(\mathcal{O}_E \otimes \mathbb{Z}_\ell \right)^\times$, considered as a subgroup of $\operatorname{Aut}_{\mathbb{Z}_\ell} \left( \mathcal{O}_E \otimes \mathbb{Z}_\ell \right) \cong \operatorname{GL}_2(\mathbb{Z}_\ell) \cong \operatorname{Aut} T_\ell A$, 
and let $N(C_\ell)$ be the normalizer of $C_\ell$ in $\operatorname{GL}_2(\mathbb{Z}_\ell)$.
\begin{enumerate}
\item Suppose that $E \subseteq K$: then $G_\infty$ is contained in $\prod_\ell C_\ell$, and the index $\left[\prod_\ell C_\ell : G_\infty \right]$ does not exceed $3[K:\mathbb{Q}]$. The equality $G_{\ell^\infty}=C_\ell$ holds for every prime $\ell$ unramified in $K$ and such that $A$ has good reduction at all places of $K$ of characteristic $\ell$.
\item Suppose that $E \not \subseteq K$: then $G_\infty$ is contained in $\prod_\ell N(C_\ell)$ but not in $\prod_\ell C_\ell$, and the index $\left[\prod_\ell N(C_\ell) : G_\infty \right]$ is not finite. The intersection $H_\infty=G_\infty \cap \prod_\ell C_\ell$ has index 2 in $G_\infty$, and the index $\left[\prod_\ell C_\ell : H_\infty \right]$ does not exceed $6[K:\mathbb{Q}]$. The equality $G_{\ell^\infty}=N(C_\ell)$ holds for every prime $\ell$ unramified in $K \cdot E$ and such that $A$ has good reduction at all places of $K$ of characteristic $\ell$.
\end{enumerate}
Finally, the constants 3 and 6 appearing in parts (1) and (2) respectively can be replaced by 1 and 2 if we further assume that the $j$-invariant of $A$ is neither 0 nor 1728.
\end{theorem}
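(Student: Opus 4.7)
The plan is to combine three ingredients: the fundamental theorem of complex multiplication (Theorem~\ref{thm_FundamentalTheoremCM}), the pointwise bounds coming from Theorem~\ref{thm_Finale}, and global class field theory for the extension $K/E$. I will treat case (1) first, then reduce case (2) to it by base-changing to $K' = K \cdot E$.

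For case (1), where $E \subseteq K$: since $E$ is imaginary quadratic and carries its unique CM type, a direct computation with the definitions shows that $E^* = E$, that the reflex norm $\Phi_{(E,S)}$ is the identity of $T_E$, and that $\MT(A) = T_E$. In particular the rank is $r = 2 = g+1$, and the torsion group $F$ of $\ker \Phi_{(E,S)}$ is trivial because $f(2) = 1$. The inclusion $G_\infty \subseteq \prod_\ell C_\ell$ is immediate: Galois commutes with the $\mathcal{O}_E$-action (defined over $K$ by assumption), and $C_\ell$ is its own centralizer in $\operatorname{GL}_2(\mathbb{Z}_\ell)$. The pointwise equality $G_{\ell^\infty} = C_\ell$ for primes $\ell$ unramified in $K$ with good reduction above $\ell$ then follows directly from Theorem~\ref{thm_Finale}(3), whose index bound collapses to $\mu^* \cdot |F| = 1$. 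For the adelic index, I apply Theorem~\ref{thm_FundamentalTheoremCM}: since $\Phi_{(E,S)}$ is the identity, the representation takes the form $\rho_\infty(a) = \varepsilon(a) \cdot N_{K/E}(a_{\mathrm{fin}})^{-1}$, with $\varepsilon$ valued in $\mu(E)$ on the unit ideles. The image of $\widehat{\mathcal{O}}_K^\times$ in $\abGal{K}^{\mathrm{ab}}$ is the subgroup fixing the Hilbert class field $H$, so $\rho_\infty(\widehat{\mathcal{O}}_K^\times) \subseteq G_\infty$, and from $\rho_\infty(\widehat{\mathcal{O}}_K^\times) \cdot \mu(E) = N_{K/E}(\widehat{\mathcal{O}}_K^\times) \cdot \mu(E)$ one deduces
\[
[\widehat{\mathcal{O}}_E^\times : G_\infty] \leq |\mu(E)| \cdot [\widehat{\mathcal{O}}_E^\times : N_{K/E}(\widehat{\mathcal{O}}_K^\times) \cdot \mathcal{O}_E^\times].
\]
By global class field theory, the natural map
\[
\widehat{\mathcal{O}}_E^\times / (N_{K/E}(\widehat{\mathcal{O}}_K^\times) \cdot \mathcal{O}_E^\times) \longrightarrow \mathbb{A}_E^\times / (E^\times \cdot N_{K/E}(\mathbb{A}_K^\times))
\]
is injective, and the target has order at most $[K:E]$. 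Hence $[\prod_\ell C_\ell : G_\infty] \leq |\mu(E)| \cdot [K:E] \leq 3[K:\mathbb{Q}]$, since $|\mu(E)| \leq 6$ and $[K:E] = [K:\mathbb{Q}]/2$. When $j(A) \notin \{0,1728\}$, one has $\operatorname{Aut}_{\overline K}(A) = \{\pm 1\}$, which forces $\varepsilon$ into $\{\pm 1\}$ and replaces the $|\mu(E)|$ factor by $2$, yielding the improved bound $[K:\mathbb{Q}]$.

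Case (2) is reduced to case (1) via $K' = KE$, a quadratic extension of $K$ over which $A$ admits CM by $E$. The subgroup $H_\infty = G_\infty \cap \prod_\ell C_\ell$ is precisely the image of $\abGal{K'}$ under $\rho_\infty$, so case (1) applied to $A/K'$ gives $[\prod_\ell C_\ell : H_\infty] \leq 3[K':\mathbb{Q}] = 6[K:\mathbb{Q}]$ (respectively $2[K:\mathbb{Q}]$ when $j \notin \{0,1728\}$). For $g \in \abGal{K} \setminus \abGal{K'}$, the restriction $g|_E$ is complex conjugation, so $\rho_\infty(g)$ acts on each $T_\ell A$ by conjugating the $\mathcal{O}_E \otimes \mathbb{Z}_\ell$-action into its Galois conjugate; hence $\rho_\infty(g) \in N(C_\ell) \setminus C_\ell$ for every $\ell$ simultaneously. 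This yields $G_\infty \subseteq \prod_\ell N(C_\ell)$, $G_\infty \not\subseteq \prod_\ell C_\ell$, and $[G_\infty : H_\infty] = 2$. Because $\prod_\ell N(C_\ell) / \prod_\ell C_\ell$ is the infinite product $\prod_\ell \mathbb{Z}/2\mathbb{Z}$ while $G_\infty$ projects to only two elements in it, the index $[\prod_\ell N(C_\ell) : G_\infty]$ is infinite. The equality $G_{\ell^\infty} = N(C_\ell)$ at primes $\ell$ unramified in $KE$ with good reduction follows by combining $G_{\ell^\infty} \cap C_\ell = C_\ell$ (from case (1) applied to $K'$) with the presence of a complex-conjugation element, which must sit in $N(C_\ell) \setminus C_\ell$.

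The main obstacle is the class field theory step: one must verify that the kernel of the map from $\widehat{\mathcal{O}}_E^\times$ to $\mathbb{A}_E^\times/(E^\times \cdot N_{K/E}(\mathbb{A}_K^\times))$ equals $N_{K/E}(\widehat{\mathcal{O}}_K^\times) \cdot \mathcal{O}_E^\times$. Without quotienting by $\mathcal{O}_E^\times$, global elements of $E^\times$ meeting $\widehat{\mathcal{O}}_E^\times$ can produce spurious classes in $\widehat{\mathcal{O}}_E^\times / N_{K/E}(\widehat{\mathcal{O}}_K^\times)$ that are not genuine local norms from $K$; enlarging the denominator by $\mathcal{O}_E^\times$ kills precisely these, at the cost of the multiplicative factor $|\mu(E)|$ that appears in the final estimate. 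Once the injection is established, the rest of the argument amounts to bookkeeping of local and global contributions.
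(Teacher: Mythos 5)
Your proposal follows the same overall plan as the paper: Theorem~\ref{thm_Finale} and the Fundamental Theorem of CM for the unramified-good-reduction primes; class field theory for the adelic index in case~(1); reduction to the quadratic extension $K'=KE$ for case~(2), with lemmas~\ref{lemma_NormalizerCartan}--\ref{lemma_NontrivialIntersection} supplying the normalizer containment, the index-two statement, and the infinite index of $G_\infty$ in $\prod_\ell N(C_\ell)$.

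The divergence -- and the gap -- is in the class field theory step of case~(1). The paper bounds $\left[\prod_\ell C_\ell : \operatorname{Im}\varphi_\infty\right]$ directly by $[K:E]$ by citing the global analogue of Theorem~\ref{thm_CokernelOfNorm} (Artin--Tate, Theorem~7, p.~161). You instead claim that the natural map
\[
\widehat{\mathcal{O}}_E^\times \bigm/ \bigl(N_{K/E}(\widehat{\mathcal{O}}_K^\times)\cdot\mathcal{O}_E^\times\bigr) \longrightarrow \mathbb{A}_E^\times \bigm/ \bigl(E^\times\cdot N_{K/E}(\mathbb{A}_K^\times)\bigr)
\]
is injective, and then invoke the norm-index inequality for the target. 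The injectivity, i.e. the identity
\[
\widehat{\mathcal{O}}_E^\times \cap E^\times N_{K/E}(\mathbb{A}_K^\times) \;=\; N_{K/E}(\widehat{\mathcal{O}}_K^\times)\cdot\mathcal{O}_E^\times,
\]
is a genuine assertion (of principal-genus type) that does not follow from the norm-index inequality and is not verified. The actual kernel contains $u=e\cdot N_{K/E}(a)$ for $e\in E^\times$ a non-unit whose fractional ideal $(e)^{-1}$ is an ideal norm from $K$; the heuristic that quotienting by $\mathcal{O}_E^\times$ removes "spurious classes from global elements of $E^\times$ meeting $\widehat{\mathcal{O}}_E^\times$" does not capture this, and (with the usual conventions) that intersection is either trivial or equal to $\mathcal{O}_E^\times$, neither of which explains the potential enlargement. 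You identify this as "the main obstacle", but the obstacle is left unresolved, so the bound $\left[\prod_\ell C_\ell : G_\infty\right]\leq 3[K:\mathbb{Q}]$ is not established. The intended ingredient is the Artin--Tate norm-index theorem cited in the paper; that is what should replace the injection argument.

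A secondary point: the assertion that $G_\infty\subseteq\prod_\ell C_\ell$ is "immediate" because Galois commutes with the $\mathcal{O}_E$-action and $C_\ell$ is its own centralizer is only correct when $\operatorname{End}_K(A)$ is the maximal order $\mathcal{O}_E$ and $T_\ell A$ is free of rank~1 over $\mathcal{O}_E\otimes\mathbb{Z}_\ell$ at every $\ell$. The theorem allows $\operatorname{End}_{\overline{K}}(A)$ to be an arbitrary order in $E$; the paper covers this via Corollary~\ref{cor_EC}, which rests on the Serre--Tate description rather than a centralizer argument. Your inclusion argument should either restrict to the maximal-order case or be replaced by that corollary.
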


We start by recording the following consequence of theorem \ref{thm_Finale}:

\begin{corollary}\label{cor_EC}
Let $A/K$ be an elliptic curve admitting complex multiplication (over $K$) by the imaginary quadratic field $E$. The group $G_{\ell^\infty}$ is contained in $\MT(A)(\mathbb{Z}_\ell)=C_\ell$, and if $A$ has good reduction at all places of $K$ of characteristic $\ell$ the index $[\MT(A)(\mathbb{Z}_\ell):G_{\ell^\infty}]$ is at most $\frac{1}{2}[K:\mathbb{Q}]$. If in addition $\ell$ is also unramified in $K$ we have $G_{\ell^\infty}=\left(\mathcal{O}_E \otimes \mathbb{Z}_\ell \right)^\times$.
\end{corollary}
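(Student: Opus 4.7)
The plan is to deduce the corollary directly from Theorem \ref{thm_Finale} after computing the relevant CM-theoretic invariants in the elliptic curve setting. The key observation is that for an imaginary quadratic CM field $E$ with CM type $(E,S)$ where $S=\{\sigma\}$ is a single embedding, the reflex data are essentially trivial: writing $G=\operatorname{Gal}(E/\mathbb{Q})=\{1,\tau\}$ and $H=\{1\}$, the set $\tilde{S}$ is a singleton, so $H'=\{g\in G:\tilde{S}g=\tilde{S}\}=\{1\}$ and hence $E^*=E$. Definition \ref{def_ReflexNorm} then shows that $\Phi_{(E,S)}^*:\widehat{T_E}\to\widehat{T_{E^*}}=\widehat{T_E}$ is the identity, so the reflex norm itself is the identity $T_E\to T_E$. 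In particular, $\MT(A)=T_E$ (so that $\MT(A)(\mathbb{Z}_\ell)=(\mathcal{O}_E\otimes\mathbb{Z}_\ell)^\times=C_\ell$) and the kernel of the reflex norm is trivial, giving $|F|=1$.

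Next I would observe that the containment $G_{\ell^\infty}\subseteq C_\ell$ is automatic: by Theorem \ref{thm_FundamentalTheoremCM}, $\rho_{\ell^\infty}$ takes values in $(\mathcal{O}_E\otimes\mathbb{Z}_\ell)^\times$, which is precisely $C_\ell=\MT(A)(\mathbb{Z}_\ell)$. This already establishes the first sentence of the corollary and allows the identification $G_{\ell^\infty}\cap\MT(A)(\mathbb{Z}_\ell)=G_{\ell^\infty}$ used in what follows.

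Third, I apply Theorem \ref{thm_Finale}(2) with $g=1$, $r=\operatorname{rank}\MT(A)=2$, $|F|=1$. Since the hypothesis is that $A$ has CM over $K$, the general remark that $E^*\subseteq K$ forces $E\subseteq K$ here, so $[K:E^*]=[K:E]=[K:\mathbb{Q}]/2$. Under the assumption of good reduction at all places of characteristic $\ell$ we also have $\mu^*=1$. The bound in Theorem \ref{thm_Finale}(2) therefore reads
\[
[\MT(A)(\mathbb{Z}_\ell):G_{\ell^\infty}]\leq \mu^*\cdot[\fieldExtension]\cdot|F|^{2r}=\tfrac{1}{2}[K:\mathbb{Q}],
\]
which is the second assertion.

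Finally, for the equality statement I invoke Theorem \ref{thm_Finale}(3). Since $E\subseteq K$, if $\ell$ is unramified in $K$ it is also unramified in $E$; the condition $\ell\nmid|F|$ is vacuous as $|F|=1$. The sharpened bound in part (3) when $\ell$ is moreover unramified in $K$ then gives that the index divides $\mu^*\cdot|F|=1$, so $G_{\ell^\infty}=C_\ell=(\mathcal{O}_E\otimes\mathbb{Z}_\ell)^\times$. None of these steps is genuinely difficult; the only thing one has to check carefully is the computation $|F|=1$ (equivalently, that the reflex norm is an isomorphism in the imaginary quadratic case), after which the statement is just the specialization of Theorem \ref{thm_Finale} to $g=1$.
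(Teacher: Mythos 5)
Your proposal is correct and follows essentially the same route as the paper: compute that $E^*=E$, the reflex norm is the identity, $\MT(A)=T_E$ and $|F|=1$; get the containment $G_{\ell^\infty}\subseteq C_\ell$ from the Serre--Tate description of $\rho_{\ell^\infty}$; and then specialize Theorem \ref{thm_Finale} (parts (2) and (3) respectively) using $[K:E^*]=\tfrac12[K:\mathbb{Q}]$, $\mu^*=1$, $|F|=1$. The only difference is that you unpack the reflex-type computation a bit more explicitly than the paper does, which is fine.
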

\begin{proof}
Since $E$ is quadratic, $E$ and $E^*$ coincide and the reflex norm is simply the identity $T_E \to T_E$, hence $\MT(A)=T_E$ and (in the notation of theorem \ref{thm_Finale}) $F$ is the trivial group. In particular $T_E(\mathbb{Z}_\ell)=\left(\mathcal{O}_E \otimes \mathbb{Z}_\ell \right)^\times=C_\ell$ contains $G_{\ell^\infty}$ by \cite[Corollary 2 to Theorem 5]{MR0236190} (cf. also \cite[Corollaire on p. 302]{MR0387283}): the claim on the index then follows from theorem \ref{thm_Finale} upon noticing that $[K:E^*]=[K:E]=\frac{1}{2}[K:\mathbb{Q}]$.
Furthermore, if $\ell$ is unramified in $K$, then it is also unramified in $E$, and the remaining assertion $G_{\ell^\infty}=C_\ell=\MT(A)(\mathbb{Z}_\ell)$ follows from part (3) of theorem \ref{thm_Finale}.
\end{proof}

We shall also need some results concerning elliptic curves $A/K$ that admit complex multiplication over $\overline{K}$ but not over $K$. We start with the following easy properties of $N(C_\ell)$:

\begin{lemma}\label{lemma_NormalizerCartan}
$C_\ell$ is of index 2 in $N(C_\ell)$. In particular, $N(C_\ell)$ is generated by $C_\ell$ and any element in $N(C_\ell) \setminus C_\ell$. Furthermore, if $H_\ell$ is an open subgroup of $C_\ell$, then the normalizer of $H_\ell$ in $\operatorname{GL}_2(\mathbb{Z}_\ell)$ is contained in $N(C_\ell)$.
\end{lemma}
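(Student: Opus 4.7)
The plan is to prove all three parts by studying the conjugation action of $N(C_\ell)$ on $C_\ell$ and exploiting the fact that the $\mathbb{Q}_\ell$-linear span of $C_\ell$ inside $M_2(\mathbb{Q}_\ell)$ is the maximal commutative subalgebra $E\otimes \mathbb{Q}_\ell$.

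For part (1), I would first observe that any $g\in N(C_\ell)$ conjugates $C_\ell$ to itself, and since $C_\ell$ generates $\mathcal{O}_E\otimes\mathbb{Z}_\ell$ as a $\mathbb{Z}_\ell$-algebra, $g$ induces a $\mathbb{Z}_\ell$-algebra automorphism of $\mathcal{O}_E\otimes\mathbb{Z}_\ell$. This gives a homomorphism
\[
\Phi: N(C_\ell)/C_\ell \to \operatorname{Aut}_{\mathbb{Z}_\ell\text{-alg}}\bigl(\mathcal{O}_E\otimes\mathbb{Z}_\ell\bigr).
\]
The target has order $2$: if $\ell$ is inert or ramified in $E$ it is the Galois group of $E\otimes\mathbb{Q}_\ell$ over $\mathbb{Q}_\ell$; if $\ell$ is split it is generated by the swap of the two factors. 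In all cases the nontrivial element is the extension $\tau$ of complex conjugation. Injectivity of $\Phi$ comes from the double centralizer property: any $g\in\operatorname{GL}_2(\mathbb{Z}_\ell)$ commuting with all of $C_\ell$ commutes with the $\mathbb{Q}_\ell$-algebra $E\otimes\mathbb{Q}_\ell$ it spans, hence (being semisimple commutative and maximal in $M_2(\mathbb{Q}_\ell)$) belongs to $(E\otimes\mathbb{Q}_\ell)\cap\operatorname{GL}_2(\mathbb{Z}_\ell)=C_\ell$. Surjectivity is exhibited concretely: realizing $C_\ell$ via its action on $\mathcal{O}_E\otimes\mathbb{Z}_\ell$ by left multiplication, the $\mathbb{Z}_\ell$-linear automorphism $S$ of $\mathcal{O}_E\otimes\mathbb{Z}_\ell$ induced by $\tau$ lies in $\operatorname{GL}_2(\mathbb{Z}_\ell)$, and a one-line computation gives $S\cdot m_a\cdot S^{-1}=m_{\tau(a)}$ for every $a\in\mathcal{O}_E\otimes\mathbb{Z}_\ell$.

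Part (2) then follows at once: since $[N(C_\ell):C_\ell]=2$, any element outside $C_\ell$ together with $C_\ell$ generates the whole normalizer. For part (3), suppose $g\in\operatorname{GL}_2(\mathbb{Z}_\ell)$ normalizes an open subgroup $H_\ell\leq C_\ell$. Since $H_\ell$ is open it contains $1+\ell^n(\mathcal{O}_E\otimes\mathbb{Z}_\ell)$ for some $n$, so its $\mathbb{Q}_\ell$-linear span inside $M_2(\mathbb{Q}_\ell)$ is all of $E\otimes\mathbb{Q}_\ell$. Conjugation by $g$ preserves this span, hence $g(E\otimes\mathbb{Q}_\ell)g^{-1}=E\otimes\mathbb{Q}_\ell$. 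Moreover, conjugation preserves characteristic polynomials over $\mathbb{Z}_\ell$ and hence integrality, so $g(\mathcal{O}_E\otimes\mathbb{Z}_\ell)g^{-1}$ is again a $\mathbb{Z}_\ell$-order inside $E\otimes\mathbb{Q}_\ell$; by maximality of $\mathcal{O}_E\otimes\mathbb{Z}_\ell$ (applied to both $g$ and $g^{-1}$) we conclude $g(\mathcal{O}_E\otimes\mathbb{Z}_\ell)g^{-1}=\mathcal{O}_E\otimes\mathbb{Z}_\ell$, so $g\in N(C_\ell)$.

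The main thing to be careful with is the surjectivity step in (1), which requires an explicit construction of a $\mathbb{Z}_\ell$-integral (not merely $\mathbb{Q}_\ell$-rational) representative of $\tau$ in $\operatorname{GL}_2(\mathbb{Z}_\ell)$; the left-multiplication model of $C_\ell\hookrightarrow\operatorname{GL}_2(\mathbb{Z}_\ell)$ makes this completely transparent. The remaining delicate point is to check that the centralizer of $C_\ell$ in $\operatorname{GL}_2(\mathbb{Z}_\ell)$ is exactly $C_\ell$ in all three local behaviours (split, inert, ramified) of $\ell$ in $E$, which reduces to the standard fact that a semisimple commutative $\mathbb{Q}_\ell$-subalgebra of $M_2(\mathbb{Q}_\ell)$ of maximal dimension equals its own centralizer.
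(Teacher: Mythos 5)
Your argument is correct, but it takes a genuinely different route from the paper. The paper works entirely in coordinates: after choosing a $\mathbb{Z}$-basis $(1,\omega)$ of $\mathcal{O}_E$ with $\omega^2=c\omega+d$, it observes that $C_\ell = \operatorname{GL}_2(\mathbb{Z}_\ell)\cap\Pi$ for an explicit $2$-dimensional plane $\Pi\subset M_2$, shows that normalizing $C_\ell$ is equivalent to stabilizing $\Pi$, obtains part (3) by noting that stabilizing $\Pi$ is Zariski-closed and any open $H_\ell\leq C_\ell$ is Zariski-dense in $\Pi$, and exhibits the explicit matrix $\left(\begin{smallmatrix}1 & c \\ 0 & -1\end{smallmatrix}\right)$ as a representative of the nontrivial coset. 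You instead argue structurally: the index-$2$ statement comes from identifying $N(C_\ell)/C_\ell$ with $\operatorname{Aut}_{\mathbb{Z}_\ell\text{-alg}}(\mathcal{O}_E\otimes\mathbb{Z}_\ell)$ via double centralizer plus an explicit integral lift of $\tau$, and part (3) comes from maximality of $\mathcal{O}_E\otimes\mathbb{Z}_\ell$ as an order in $E\otimes\mathbb{Q}_\ell$. Both proofs are complete; the paper's is more elementary and hands you a concrete element of $N(C_\ell)\setminus C_\ell$ (which is all that is actually reused downstream), while yours is more conceptual and would adapt with little change to maximal tori of higher rank. One small remark: in part (3), the appeal to ``conjugation preserves characteristic polynomials'' is a slight detour — the fact that $g,g^{-1}\in M_2(\mathbb{Z}_\ell)$ already gives $g(\mathcal{O}_E\otimes\mathbb{Z}_\ell)g^{-1}\subseteq M_2(\mathbb{Z}_\ell)\cap(E\otimes\mathbb{Q}_\ell)$, and the latter is the maximal order directly.
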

\begin{proof}
Fix $\omega \in \mathcal{O}_E$ such that $(1,\omega)$ is a $\mathbb{Z}$-basis of $\mathcal{O}_E$. There exist $c,d \in \mathbb{Z}$ such that $\omega$ satisfies the quadratic relation $\omega^2=c\omega+d$. In the $\mathbb{Z}_\ell$-basis $(1,\omega)$ of $\mathcal{O}_E \otimes \mathbb{Z}_\ell$, the group $C_\ell$ is the subgroup of $\operatorname{GL}_2(\mathbb{Z}_\ell)$ given by the invertible matrices that can be written as $\left(\begin{matrix} a & bd \\ b & a+bc \end{matrix} \right)$ for some $a,b \in \mathbb{Z}_\ell$. We thus see that $C_\ell$ is given by the intersection of $\operatorname{GL}_2(\mathbb{Z}_\ell)$ with a 2-dimensional plane $\Pi$ (that defined by the equations $x_{11}+cx_{21}=x_{22}, x_{12}=dx_{21}$, where $x_{ij}$ is the coefficient on the $i$-th row and $j$-th column). In particular, for an element $g \in \operatorname{GL}_2(\mathbb{Z}_\ell)$ the condition of normalizing $C_\ell$ is equivalent to that of stabilizing $\Pi$. The latter is a Zariski-closed condition, and since any subgroup $H_\ell$ of $C_\ell$ open in the $\ell$-adic topology is Zariski-dense in $\Pi$ we see that if $g$ normalizes $H_\ell$, then it stabilizes $\Pi$ and hence it normalizes $C_\ell$.
Finally, with the explicit description at hand it is immediate to see that $[N(C_\ell):C_\ell]=2$, and that a nontrivial element of $N(C_\ell) \setminus C_\ell$ is given by $\left(\begin{matrix} 1 & c \\ 0 & -1 \end{matrix} \right)$.
\end{proof}

\begin{lemma}\label{lemma_ContainmentEC}
Suppose $A/K$ is an elliptic curve such that $\operatorname{End}_K(A)=\mathbb{Z}$ but $\operatorname{End}_{\overline{K}}(A)$ is an order in an imaginary quadratic field $E$: then for every prime $\ell$ the group $G_{\ell^\infty}$ is contained in $N(C_\ell)$.
\end{lemma}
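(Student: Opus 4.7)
The plan is to reduce to the situation where $A$ has CM over the base field, and then exploit the action of the extra Galois automorphisms by conjugation.

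First I would pass to the smallest extension $K'/K$ over which all of $\operatorname{End}_{\overline K}(A)$ is defined. By a classical result for elliptic curves (the absolute Galois group of $K$ acts on the endomorphism algebra $\operatorname{End}_{\overline K}(A)\otimes\mathbb{Q}=E$ by field automorphisms, and the action is trivial on $\mathbb{Q}$ only), the hypothesis $\operatorname{End}_K(A)=\mathbb{Z}$ together with $E\not\subseteq K$ gives $K'=K\cdot E$, a quadratic extension of $K$. Denote by $H_{\ell^\infty}$ the image of $\operatorname{Gal}(\overline K/K')$ in $\operatorname{Aut}T_\ell A$: by construction $A/K'$ has complex multiplication over $K'$ by $E$, so Corollary \ref{cor_EC} applies and yields $H_{\ell^\infty}\subseteq C_\ell$, with finite index in $C_\ell$.

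The key observation is that $\operatorname{Gal}(\overline K/K')$ is a normal subgroup of index $2$ in $\operatorname{Gal}(\overline K/K)$, hence $H_{\ell^\infty}$ is normal in $G_{\ell^\infty}$; equivalently, every element of $G_{\ell^\infty}$ normalizes $H_{\ell^\infty}$ inside $\operatorname{GL}_2(\mathbb{Z}_\ell)$. Since $H_{\ell^\infty}$ is a finite-index subgroup of the $\ell$-adic Lie group $C_\ell$, it is open in $C_\ell$. Lemma \ref{lemma_NormalizerCartan} then says exactly that any element of $\operatorname{GL}_2(\mathbb{Z}_\ell)$ normalizing an open subgroup of $C_\ell$ lies in $N(C_\ell)$. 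Consequently $G_{\ell^\infty}\subseteq N(C_\ell)$, which is the desired conclusion.

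There is no real obstacle: the only point that deserves a careful check is that the field of definition of the endomorphisms is indeed $K\cdot E$ (so that $[K':K]=2$, giving the normal-subgroup-of-index-two structure), but this is standard in the CM theory of elliptic curves and follows from the fact that $\operatorname{Aut}(E/\mathbb{Q})=\{1,\tau\}$ has order $2$, together with the hypothesis $\operatorname{End}_K(A)=\mathbb{Z}\subsetneq \operatorname{End}_{\overline K}(A)$. Everything else is a direct application of Corollary \ref{cor_EC} and Lemma \ref{lemma_NormalizerCartan}.
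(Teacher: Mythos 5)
Your proof is correct and follows essentially the same route as the paper's: pass to $K^1=K\cdot E$, observe that the corresponding image $H_{\ell^\infty}$ is normal of index $2$ in $G_{\ell^\infty}$ and has finite index in $C_\ell$, then apply Lemma \ref{lemma_NormalizerCartan}. The only minor imprecision is the citation for the finite-index (openness) of $H_{\ell^\infty}$ in $C_\ell$: Corollary \ref{cor_EC} as stated gives an explicit index bound only when $A$ has good reduction at all places above $\ell$, so for an arbitrary prime $\ell$ you should instead invoke Theorem \ref{thm_Finale}(2) or, as the paper does, Serre's result \cite[\S 4.5, Corollaire]{MR0387283}, both of which give finite index unconditionally.
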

\begin{proof}
The field $K^1=K \cdot E$ is a quadratic extension of $K$ over which all the endomorphisms of $A$ are defined, and the group $G_{\ell^\infty}^1 = \rho_{\ell^\infty}\left(\abGal{K^1} \right)$ is a closed subgroup of $G_{\ell^\infty}$ of index at most 2 (hence in particular it is normal and open in $G_{\ell^\infty}$). Let $R=\operatorname{End}_{\overline{K}}(A)$. Since $A$ admits complex multiplication by $R$ over $K^1$, we know by \cite[§4.5, Corollaire]{MR0387283} that $G_{\ell^\infty}^1$ is of finite index in $(R \otimes \mathbb{Z}_\ell)^\times$, which in turn is of finite index in $C_\ell$. Thus the normalizer of $G_{\ell^\infty}^1$ is included in $N(C_\ell)$ by lemma \ref{lemma_NormalizerCartan}, and since $G_{\ell^\infty}^1$ is normal in $G_{\ell^\infty}$ we have $G_{\ell^\infty} \subseteq N(G_{\ell^\infty}^1) \subseteq N(C_\ell)$ as claimed.
\end{proof}

\begin{lemma}\label{lemma_NontrivialIntersection}
In the situation of the previous lemma, for all primes $\ell$ the group $G_{\ell^\infty}$ has nontrivial intersection with $N(C_\ell) \setminus C_\ell$.
\end{lemma}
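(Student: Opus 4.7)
The plan is to exploit the previous lemma, which already gives $G_{\ell^\infty}\subseteq N(C_\ell)$, and reduce the claim to showing that $G_{\ell^\infty}$ is not contained in $C_\ell$. Since $E\not\subseteq K$ but all the geometric endomorphisms live in $E$, the quadratic extension $K^1:=KE$ satisfies $\operatorname{Gal}(K^1/K)\xrightarrow{\sim}\operatorname{Gal}(E/\mathbb{Q})$, so I would fix once and for all a $\sigma\in\operatorname{Gal}(\overline K/K)$ whose restriction to $K^1$ is nontrivial. The candidate element of $G_{\ell^\infty}\cap\bigl(N(C_\ell)\setminus C_\ell\bigr)$ will then be $\rho_{\ell^\infty}(\sigma)$.

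Set $R=\operatorname{End}_{\overline K}(A)$; it is an order in $E$ of finite index in $\mathcal{O}_E$, so $(R\otimes\mathbb{Z}_\ell)^\times$ is an open subgroup of $C_\ell=(\mathcal{O}_E\otimes\mathbb{Z}_\ell)^\times$. By functoriality of the Tate module, the natural conjugation action of $\operatorname{Gal}(\overline K/K)$ on $\operatorname{End}_{\overline K}(A)\otimes\mathbb{Z}_\ell\hookrightarrow\operatorname{End}_{\mathbb{Z}_\ell}(T_\ell A)$ is implemented inside $\operatorname{GL}_2(\mathbb{Z}_\ell)$ by conjugation through $\rho_{\ell^\infty}$. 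Combined with the identification $\operatorname{Gal}(K^1/K)\xrightarrow{\sim}\operatorname{Gal}(E/\mathbb{Q})$, this shows that $\rho_{\ell^\infty}(\sigma)$ normalises $(R\otimes\mathbb{Z}_\ell)^\times$ and that the induced conjugation action on it coincides with the restriction of complex conjugation $\overline{\,\cdot\,}$ on $E$. In particular $\rho_{\ell^\infty}(\sigma)\in N(C_\ell)$ by Lemma~\ref{lemma_NormalizerCartan}.

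Finally, assume for contradiction that $\rho_{\ell^\infty}(\sigma)\in C_\ell$. Since $C_\ell$ is commutative, conjugation by $\rho_{\ell^\infty}(\sigma)$ would act trivially on the subgroup $(R\otimes\mathbb{Z}_\ell)^\times$, whereas the preceding paragraph identifies this conjugation with complex conjugation, which is nontrivial on $R$ (an order in the imaginary quadratic field $E$). This contradiction forces $\rho_{\ell^\infty}(\sigma)\in N(C_\ell)\setminus C_\ell$, completing the argument. The only non-formal step is the translation between the Galois action on $R$ and the conjugation action on its image in $\operatorname{End}_{\mathbb{Z}_\ell}(T_\ell A)$; I expect this to be the main point to articulate carefully, but it is a purely formal consequence of the functoriality of $A\mapsto T_\ell A$ together with the faithfulness of $R\otimes\mathbb{Z}_\ell\hookrightarrow\operatorname{End}_{\mathbb{Z}_\ell}(T_\ell A)$.
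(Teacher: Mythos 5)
Your argument is correct, but it takes a genuinely different and more elementary route than the paper's. The paper invokes Faltings' theorem to assert that the centralizer of $G_{\ell^\infty}$ in $\operatorname{End}(T_\ell A)\otimes\mathbb{Q}_\ell$ is $\operatorname{End}_K(A)\otimes\mathbb{Q}_\ell=\mathbb{Q}_\ell$; since $G_{\ell^\infty}$ is visibly not contained in the homotheties, it cannot be abelian and hence cannot lie in $C_\ell$. That is short but relies on deep machinery (the Tate conjecture over number fields), and it is non-constructive: it does not point to a specific element of $G_{\ell^\infty}$ outside $C_\ell$. You instead pick a concrete $\sigma\in\operatorname{Gal}(\overline K/K)$ acting nontrivially on $K^1=KE$ and observe that conjugation by $\rho_{\ell^\infty}(\sigma)$ on the image of $R\otimes\mathbb{Z}_\ell$ in $\operatorname{End}_{\mathbb{Z}_\ell}(T_\ell A)$ implements the Galois action on $R$, i.e.\ complex conjugation; this is exactly the identity $T_\ell(\sigma\cdot\phi)=\rho(\sigma)\,T_\ell(\phi)\,\rho(\sigma)^{-1}$, a formal consequence of functoriality that requires no input beyond the construction of the Tate module. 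Membership of $\rho_{\ell^\infty}(\sigma)$ in $C_\ell$ would force this conjugation to be trivial on $(R\otimes\mathbb{Z}_\ell)^\times$ (and hence on $R\otimes\mathbb{Q}_\ell$, which the units $\mathbb{Q}_\ell$-span), contradicting the nontriviality of complex conjugation on $R$. Your version is thus both more self-contained (Faltings is replaced by a diagram chase) and more explicit (it produces $\rho_{\ell^\infty}(\sigma)$ as a witness in $N(C_\ell)\setminus C_\ell$), at the minor cost of having to unwind the Galois action on endomorphisms carefully, which you flag and handle correctly.
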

\begin{proof}
For all primes $\ell$ we have $G_{\ell^\infty} \subseteq N(C_\ell)$. On the other hand, we know by Faltings' theorem that the centralizer of $G_{\ell^\infty}$ in $\operatorname{End} \left(T_\ell A \right) \otimes \mathbb{Q}_\ell$ equals $\operatorname{End}_K(A) \otimes \mathbb{Q}_\ell=\mathbb{Q}_\ell$. It follows that $G_{\ell^\infty}$ cannot be abelian, for otherwise its centralizer would contain all of $G_{\ell^\infty}$ (which is not contained in the homotheties $\mathbb{Q}_\ell$): in particular, $G_{\ell^\infty}$ must have nontrivial intersection with $N(C_\ell) \setminus C_\ell$. 
\end{proof}

We can now prove theorem \ref{thm_EC}:

\begin{proof}{(of theorem \ref{thm_EC})}
The proof is quite similar to that of theorem \ref{thm_Finale}, the main differences being that we need to treat all places at the same time and that the action of $E$ needs not be defined over $K$.
Consider first case (1). The inclusion $G_{\ell^\infty} \subseteq C_\ell$ is part of corollary \ref{cor_EC}, and implies $G_\infty \subseteq \prod_\ell G_{\ell^\infty} \subseteq \prod_\ell C_\ell$. In particular, $G_\infty$ is abelian, so class field theory allows us to interpret $\rho_\infty$ as a map
\[
I_K \xrightarrow{\rho_\infty} \prod_\ell C_\ell
\]
that is trivial on $K^*$. As in the proof of theorem \ref{thm_Finale}, since we are looking for a \textit{lower} bound on $G_\infty$ no harm is done in replacing $I_K$ by the group of idèles of the Hilbert class field of $K$; concretely, this means considering the restriction of $\rho_\infty$ to $\displaystyle \prod_{v \in \Omega_K} \mathcal{O}_{K,v}^\times$, where $\Omega_K$ is the set of finite places of $K$. Recall from theorem \ref{thm_FundamentalTheoremCM} that the action of $\rho_\infty$ on a finite idèle $a=(a_v)_{v \in \Omega_K}$ is given by
\[
\rho_\infty (a)=\varepsilon(a) \left( N_{K_\ell/E_\ell}(a_\ell^{-1}) \right)_{\ell \text{ prime}}.
\]
As in the proof of theorem \ref{thm_Finale}, if we let $\mu(E)$ be the group of roots of unity in $E$ we know that $\ker \varepsilon$ is a subgroup of $\prod_{v \in \Omega_K} \mathcal{O}_{K,v}^\times$ of index at most $|\mu(E)|$, and since $E$ is a quadratic imaginary field we have $|\mu(E)|\leq 6$. Therefore the image of $\rho_\infty$ has index at most $|\ker \varepsilon| \leq 6$ in the image of the map

\[
\begin{array}{cccc}
\varphi_\infty : & \prod_{v \in \Omega_K} \mathcal{O}_v^\times & \to & \prod_\ell (\mathcal{O}_E \otimes \mathbb{Z}_\ell)^\times = \prod_\ell C_\ell \\
 & (a)_v & \mapsto & \left( N_{K_\ell/E_\ell}\left(a_\ell \right) \right)_\ell
\end{array}
\]
given by taking local norms from $K_\ell$ to $E_\ell$. Hence in particular we have $\left[\prod_\ell C_\ell : G_\infty \right] \leq 6 \left[\prod_\ell C_\ell:\operatorname{Im} \varphi_\infty \right]$, and it suffices to show that
\[
\left[ \prod_\ell C_\ell : \operatorname{Im} \varphi_\infty \right] \leq [K:E] = \frac{1}{2}[K:\mathbb{Q}],
\]
which follows from \cite[Theorem 7 on p.~161]{MR2467155} (the global field counterpart of theorem \ref{thm_CokernelOfNorm}). The remaining assertion of part (1) is exactly the content of corollary \ref{cor_EC}.

\smallskip

As for part (2), we have seen in lemmas \ref{lemma_ContainmentEC} and \ref{lemma_NontrivialIntersection} that in this case $G_{\ell^\infty}$ is contained in $N(C_\ell)$, but not in $C_\ell$. If we let $K^1=K \cdot E$, then $A$ admits complex multiplication by $E$ over $K^1$, so $\rho_\infty\left( \abGal{K^1} \right)$ is contained in $\prod_\ell C_\ell$ by part (1). Since $\abGal{K^1}$ has index 2 in $\abGal{K}$ we must have $H_\infty = \rho_\infty\left( \abGal{K^1} \right)$, so that the index $[G_\infty:H_\infty]$ is indeed 2 and applying part (1) we find $\left[\prod_\ell C_\ell : H_\infty \right] \leq 3 [K^1:\mathbb{Q}]=6[K:\mathbb{Q}]$; moreover, the index $[\prod_\ell N(C_\ell):G_\infty]$ is not finite since the same is clearly true for the index $[\prod_\ell N(C_\ell):\prod_\ell C_\ell]$.
Finally, if $\ell$ is unramified in $K^1$ we see from corollary \ref{cor_EC} (applied to $A/K^1$) that $G_{\ell^\infty}$ contains all of $C_\ell$, and by lemma \ref{lemma_NontrivialIntersection} we know that $G_{\ell^\infty}$ also contains an element of $N(C_\ell) \setminus C_\ell$. The equality $G_{\ell^\infty}=N(C_\ell)$ then follows from lemma \ref{lemma_NormalizerCartan}.

\smallskip

As for the last assertion, notice that if we exclude elliptic curves with $j$-invariant equal to $0$ or $1728$ the field of complex multiplication $E$ is neither $\mathbb{Q}(i)$ nor $\mathbb{Q}(\zeta_3)$, so the only roots of unity in $E$ are $\pm 1$. This implies that $\operatorname{ker} \varepsilon$ has index at most 2 in $\prod_{v \in \Omega_K} \mathcal{O}_v^\times$, and the same argument as above shows that the constants $3$ and $6$ can indeed be replaced by $1$ and $2$.
\end{proof}

\begin{remark}
The following simple example shows that the constants 3 and 6 cannot be improved in general. We consider the elliptic curve $A$ over the field $K=\mathbb{Q}\left(\zeta_3 \right)$ given by the Weierstrass equation $y^2=x^3+1$. As it is clear, $A$ has complex multiplication (over $K$) by the full ring of integers of $E=K$. Moreover, all the 2-torsion points of $A$ are defined over $K$, so $G_2$ has trivial reduction modulo 2. Hence $G_2$ is a subgroup of $\ker\left( \mathbb{Z}_2[\zeta_3]^\times \to \mathbb{F}_2[\zeta_3]^\times \right)$, and its index in $(\mathcal{O}_E \otimes \mathbb{Z}_2)^\times \cong \mathbb{Z}_2[\zeta_3]^\times$ is divisible by $\left|\mathbb{F}_2[\zeta_3]^\times \right|=3$. Likewise, the fact that the 3-torsion point with coordinates $(0,1)$ is defined over $K$ shows that the index of $G_3$ in $(\mathcal{O}_E \otimes \mathbb{Z}_3)^\times$ is divisible by 2. Thus we conclude that the index of $G_\infty$ in $\prod_\ell C_\ell$ is at least $6=3[K:\mathbb{Q}]$, so that the constant 3 is indeed sharp. Finally, considering the $\mathbb{Q}$-elliptic curve given by the same Weierstrass equation shows the optimality of part (2): in this case $H_\infty$ is exactly the image of the Galois representation attached to $A/K$, so we have $[\prod_\ell C_\ell : H_\infty]=6$ by what we just showed.
\end{remark}

\subsection{Abelian surfaces}\label{subsect_Surfaces}
An easy direct computation shows that when $\dim A = 2$ the kernel of the reflex norm is always connected, and therefore the group $F$ of theorem \ref{thm_Finale} is trivial. Since furthermore simple CM types are automatically non-degenerate in dimension 2, combining theorems \ref{thm_Finale} and \ref{thm_Nondegenerate} we deduce:
\begin{corollary}
Let $A/K$ be an absolutely simple abelian variety of dimension 2. Suppose that $A$ has CM over $K$ by the field $E$ and let $\ell$ be a prime number such that $A$ has good reduction at all places of $K$ of characteristic $\ell$. The group $G_{\ell^\infty} \cap \MT(A)(\mathbb{Z}_\ell)$ has index at most $[\fieldExtension]$ in $\MT(A)(\mathbb{Z}_\ell)$, hence we have $\displaystyle [K(A[\ell^n]):K] \geq \frac{1}{[\fieldExtension]} (1-1/\ell)^3 \ell^{3n}$ for $\ell \neq 2$, while for $\ell=2$ we have $\displaystyle [K(A[2^n]):K] \geq \frac{1}{2^7[\fieldExtension]} 2^{3n}$. Finally, if $\ell$ is unramified in $K \cdot E$ we even have $[K(A[\ell^n]):K] \geq (1-1/\ell)^3 \ell^{3n}$.
\end{corollary}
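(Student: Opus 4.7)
The proof amounts to a direct specialization of the general machinery to $g=2$: the plan is to verify the two structural inputs claimed in the sentence preceding the corollary (nondegeneracy and $|F|=1$), and then to feed them into theorems \ref{thm_Finale} and \ref{thm_Nondegenerate}. Nondegeneracy in dimension $2$ is classical, so the rank $r$ of $\MT(A)$ equals $g+1=3$. For $|F|=1$, the defining matrix of the map $\Phi_{(E,S)}^*$ is a $\{0,1\}$-matrix (definition \ref{def_ReflexNorm}), and I would check directly, distinguishing the two possible Galois types of $\tilde{E}/\mathbb{Q}$ (cyclic $\mathbb{Z}/4\mathbb{Z}$ or dihedral $D_4$), that this matrix has a $3 \times 3$ minor of determinant $\pm 1$; lemma \ref{lemma_OrderGroupConnComponents} then gives $|F|=1$. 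This combinatorial verification is really the only nontrivial step, but it is elementary.

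The hypothesis of good reduction at all places of characteristic $\ell$ gives $\mu^*=1$. Part (2) of theorem \ref{thm_Finale} together with $|F|=1$ immediately yields the first assertion
\[
[\MT(A)(\mathbb{Z}_\ell) : G_{\ell^\infty} \cap \MT(A)(\mathbb{Z}_\ell)] \leq [\fieldExtension].
\]
Setting $H=G_{\ell^\infty}\cap\MT(A)(\mathbb{Z}_\ell)$ and reducing modulo $\ell^n$, the image of $H$ sits inside both $G_{\ell^n}$ and $\MT(A)(\mathbb{Z}/\ell^n\mathbb{Z})$, with index at most $[\fieldExtension]$ in the latter, so
\[
[K(A[\ell^n]):K] = |G_{\ell^n}| \geq |H \bmod \ell^n| \geq \frac{|\MT(A)(\mathbb{Z}/\ell^n\mathbb{Z})|}{[\fieldExtension]}.
\]
The two lower bounds of theorem \ref{thm_Nondegenerate} applied with $g=2$ (so $g+1=3$ and $2g+3=7$) then give exactly the stated inequalities according as $\ell \neq 2$ or $\ell=2$.

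For the last assertion, if $\ell$ is unramified in $K\cdot E$ then it is unramified in both $K$ and $E$, and part (3) of theorem \ref{thm_Finale} with $\mu^* = |F| = 1$ forces $G_{\ell^\infty}\cap\MT(A)(\mathbb{Z}_\ell) = \MT(A)(\mathbb{Z}_\ell)$, so $[K(A[\ell^n]):K] \geq |\MT(A)(\mathbb{Z}/\ell^n\mathbb{Z})|$. Moreover, since $\ell$ is unramified in $E$ it is unramified in $E^*\subseteq\tilde{E}$ as well, so $T_{E^*}$ (and hence its quotient $\MT(A)$) has good reduction at $\ell$ by proposition \ref{prop_GoodReductionNumberFieldTori}; lemma \ref{lemma_Nicolas} then gives $|\MT(A)(\mathbb{Z}/\ell^n\mathbb{Z})| \geq (1-1/\ell)^3 \ell^{3n}$ uniformly in $\ell$, yielding the claim (this point matters at $\ell=2$, where theorem \ref{thm_Nondegenerate} alone would only give the weaker factor $2^{-7}$).
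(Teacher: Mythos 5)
Your proposal is correct and follows essentially the same route the paper intends: establish nondegeneracy (via Ribet, since $2$ is prime), verify $|F|=1$ by the "easy direct computation" the paper alludes to, set $\mu^*=1$ from the good-reduction hypothesis, and then combine part (2) of theorem \ref{thm_Finale} (resp.\ part (3) for the unramified case) with the lower bounds of theorem \ref{thm_Nondegenerate} (resp.\ lemma \ref{lemma_Nicolas}). Your closing remark about why lemma \ref{lemma_Nicolas} rather than theorem \ref{thm_Nondegenerate} is needed to recover the stated bound at $\ell=2$ in the last assertion is exactly the right observation and is implicit in the paper's formulation.
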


\section{A family of varieties with small 2-torsion fields}\label{sect_Examples}
Let $p \geq 3$ be a prime number and $K_p$ be the cyclotomic field $\mathbb{Q}\left( \zeta_{p} \right)$. We let $C_p$ be the unique smooth $K_p$-curve birational to $y^{p}=x(1-x)$ and $J(p)$ be its Jacobian, again over $K_p$. It is clear that $C_p$ admits an action of $\mu_{p}$, so $J(p)$ is a CM abelian variety, admitting complex multiplication over $K_p$ by the full ring of integers of $K_p$. Notice furthermore that $C_p$ is birational to the curve
\[
z^2=w^{p}+1/4
\]
(just set $x=z+1/2$, $y=-w$), so it is hyperelliptic of genus $\frac{p-1}{2}$. Direct inspection of the model $y^{p}=x(1-x)$ reveals that $C_p$ is smooth away from $p$, so $J(p)$ has everywhere good reduction over $K_p$ except perhaps at the unique place dividing $p$. The reflex field is $K_p^*=K_p$. Let us compute the CM type $S$ of $J(p)$: in the basis $\omega_j:=w^j \frac{dw}{z}$ ($j=0,\ldots,\frac{p-3}{2}$) of the space of differentials on $C_p$, the action of $\zeta_p$ is given by
$
[\zeta_p]^* \omega_j = \zeta_p^{j+1} \omega_j,
$
hence the CM type, considered as a subset of $\left(\frac{\mathbb{Z}}{p\mathbb{Z}} \right)^\times$, is $\left\{1,\ldots,\frac{p-1}{2}\right\}$. Equivalently, 
\[
S=\left\{ g \in \left(\frac{\mathbb{Z}}{p\mathbb{Z}} \right)^\times \bigm\vert 2\langle g \rangle < p \right\},
\]
where $\langle g \rangle$ is the unique integer lying in the interval $[0,p-1]$ that is congruent to $g$ modulo $p$. This description shows that our CM type agrees with the type $S_1$ described in \cite{Mai1989192}, which by \cite[Lemma 1]{Mai1989192} is nondegenerate (cf. also \cite{MR0190144}): thus we have $\operatorname{rank} \MT(A)=\dim A +1=\frac{p+1}{2}$. 

Let now $\beta_1, \ldots, \beta_{p}$ be the roots of $w^{p}+1/4=0$ in $\overline{\mathbb{Q}}$, and let $P_i=(\beta_i,0)$ be the corresponding points of $C_p$ (in the coordinates $(w,z)$). Finally, for $i=1,\ldots,p$ let $d_i$ denote the divisor $(P_i)-(\infty)$. It is known (see for example \cite[§5.1]{MR3156850}) that the 2-torsion of $J(p)$ is an $\mathbb{F}_2$-vector space of dimension $p-1$ spanned by the $d_i$'s, which are only subject to the linear relation $\sum_{i=1}^p d_i=[0]$. It follows that the 2-torsion field $K_p(J(p)[2])=K_p\left( \left\{\beta_i\right\} \right)=K_p\left( \sqrt[p]{1/4} \right)$ has degree $p$ over $K_p$, so for $\ell=2$ and $n=1$ the ratio $\ell^{n \operatorname{rank} \MT(A)} \bigm/ [K(A[\ell^n]):K]$ is given by
\[
\frac{2^{\operatorname{rank} \MT(A)}}{[K(J(p)[2]):K]} = \frac{2^{(p+1)/2}}{p} = \frac{2^{\dim J(p)+1}}{2 \dim J(p)+1},
\]
which shows in particular that, as claimed in the introduction, the optimal bound on the quantity $\ell^{n \operatorname{rank} \MT(A)} \bigm/ [K(A[\ell^n]):K]$ grows at least exponentially in the dimension of $A$.

\bibliographystyle{plain}
\bibliography{Biblio}

\end{document}